\newcommand{\CC}{\mathbb{C}}
\newcommand{\NN}{\mathbb{N}}
\newcommand{\QQ}{\mathbb{Q}}
\newcommand{\RR}{\mathbb{R}}
\title[Elementary fractal geometry]{Elementary fractal geometry.\\ 4. Automata-generated topological spaces}
\author{Christoph Bandt}
\abstract{
	Finite automata were used to determine multiple addresses in number systems and to find topological properties of self-affine tiles and finite type fractals. We join these two lines of research by axiomatically defining automata that generate topological spaces. Simple examples show the potential of the concept.  Spaces generated by automata are topologically self-similar.  Two basic algorithms are outlined. The first one determines automata for all $k$-tuples of equivalent addresses from the automaton for double addresses. The second one constructs finite topological spaces which approximate the generated space. Finally, we discuss the realization of automata-generated spaces as self-similar sets.
	}
\keywords{Self-similarity, finite-state automaton, topological quotient
space, multiple addresses, symbolic dynamics.}
\begin{document}

\section{Overview}\label{intro}

The basic principle of number systems is to assign addresses to points. We have an address map $\varphi :S\to X$ from a symbolic space $S$ to a set $X$ of points. The simplest symbolic space is the space of sequences  $s=i_1i_2\dots$ from the digit set $D=\{ 0,1,\dots,m-1\}$. Two symbol sequences $s,t$ are called equivalent if they address the same point: $\varphi(s)=\varphi(t)$.  This paper studies algorithms that describe multiple addresses. If the set of pairs $(s,t)$ of equivalent addresses is generated by a finite automaton, the address map $\varphi$ is called automatic, and the topological quotient space $X=S/\varphi$ is automata-generated. 

Gilbert~\cite{Gi82,Gi87} started work in this direction in 1982. He constructed automata which determine double and triple expansions in number systems with complex bases $-n+i$, for $n=1,2,\dots $ Thurston emphasized the connection between number systems, self-similar tilings and automata in his influential lecture notes~\cite{T89}. This work was extended in numerous papers on canonical number systems (cf.~\cite{akithusurvey} and~\cite[Section 2.4]{FS09}) and on the topology of self-affine tiles in the plane~\cite{ALT, DN05, LL13, LL07, Lo, LZ17, ScT} and in space~\cite{TZ20}. Roughly speaking, self-affine tiles are the unit intervals of canonical number systems. Their vertices, which are interesting from a~geometrical viewpoint, have three or more addresses.

Symbolic dynamics is a~related field where addressing plays a~central part. Milnor and Thurston~\cite{MiTh} characterized unimodal maps on the interval by their kneading sequence which represents the double address of the critical point. There are special parameters where this address is preperiodic. For complex rational functions, Thurston discussed the symbolic description of Julia sets in the post-critically finite (p.c.f.) case~\cite{T85}. They were considered topologically self-similar sets by Kameyama~\cite{Ka93}. In fractal geometry, p.c.f.\ sets have been frequently studied because of their simple structure. Brownian motion and differential equations can be defined on such spaces~\cite{Kig, KongLau, Str, tep}. All p.c.f.\ fractals are generated by automata, as shown in~\cite{RW23} and in Section~\ref{exam}.

Both p.c.f. fractals and self-affine tilings belong to the class of finite type self-similar sets $X$ which can be described by an automaton, termed neighbor graph in~\cite{BM09}. Its original purpose was to check a~separation condition~\cite{Ba00}. It turned out that the automaton provides much information on the topology of $X$, as well as the Hausdorff dimension of the boundary and local dimensions~\cite{HR22}. Implemented in the IFStile package of Mekhontsev~\cite{M,Mth}, millions of fractals could be screened and interesting examples selected~\cite{EFG1,EFG3}.

However, self-similar sets generated by similitudes are mainly studied in the plane. In three dimensions, similitudes seem to be too special. Several authors used a~combinatorial approach to study fractal topology, putting aside metric features like similarity and Haus\-dorff dimension~\cite{baldwin,BK,Hata,Ka93,Pth,tep}. Zhu and Rao~\cite{ZR16} defined topology automata for fractal squares. We follow this line of research.

The new point is that we do not derive the automaton from a~number system, tile or fractal. We take the automaton as a starting point: as a~tool to construct a~topological space. In Section~\ref{basic} we axiomatically introduce the class of automata which accepts the double addresses in an address map. We give many simple examples and illustrations in Sections~\ref{auto2} and~\ref{exam}. Graphs were drawn by MATLAB, fractals by IFStile~\cite{M}. In Section~\ref{topsel} we prove that automata-generated spaces are topologically self-similar, even in a~more general setting.

Two basic algorithms are described. In Section~\ref{comp} we derive automata for the triple and $k$-tuple addresses from the original automaton for double addresses. For an automaton with three states and three digits, we find all points in $X$ with 4, 6, and 12 addresses.
In Section~\ref{inver} we construct the topological space $X$ from finite space approximations, unfolding the edge structure of the automaton. Since the automaton is finite, topological properties of the limit should be computable on a~certain finite level.
This is a work in progress. In the final Section~\ref{IFSex} we discuss the realization of an automata-generated space as a~self-similar set in the complex plane. Many open questions appear in the last part of the paper. A long-term target is to establish databases of recursively defined topological objects, maintained and developed by computer.

The automata discussed in this paper are quite similar to those studied by Frougny and Sakarovitch~\cite{FS09}, but they are used for a more geometric purpose.

\section{The concept of a~topology-generating automaton}\label{basic}
Throughout, $D=\{ 0,1,\dots,m-1\}$ is our alphabet, the set of digits for numeration, denoted by $i,j,i_k,u_k,s_k$. Words $i_1\dots i_n\in D^n$ will be denoted by $u,v,w$ and sequences $i_1i_2\dots$ by $s,t$, or $\overline{u}=uuu\dots$ for periodic sequences. The set of words is $D^* =\bigcup_{n=0}^\infty D^n $.
The set of sequences is our symbolic space $S$, the full one-sided shift over $D$ with the usual product topology. Elements of $S$ are called addresses.
\[
S=D^\NN =\{ s=s_1s_2\dots\, |\, s_k\in D \} .
\]
An address map is a~function $\varphi :S\to X$ from $S$ to any set $X$. This map induces the quotient topology on $X$ which transforms $X$ into a~compact space. When we start from a~compact topological space $X$, we require that $\varphi$ is the quotient map. Two addresses $s,t$ are said to be equivalent if $\varphi(s)=\varphi(t)$. As usual, we identify the equivalence relation with the set
\[
L_\varphi =\{ (s,t)\, |\, s,t\in S, \ \varphi(s)=\varphi(t)\} \subseteq S\times S.
\]
We call the address map $\varphi$ automatic and the topology of $X$ automata-generated if there exists a~finite automaton $G$ which accepts $L_\varphi $. In this paper, the automaton is the starting point, and the equivalence relation and the topology of $X$ will be derived from it. We begin with a~basic example.

\begin{figure}[h!t]
\begin{center}
\includegraphics[width=0.6\textwidth]{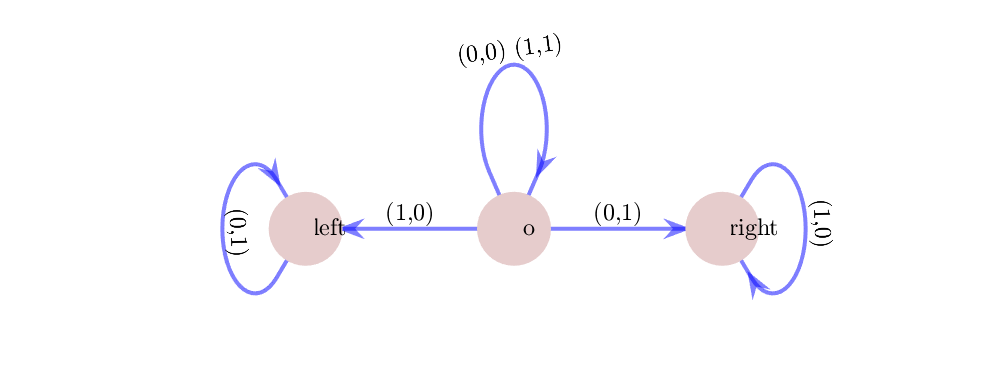}
\end{center}
\caption{The automaton for double addresses of binary numbers.}\label{fig1}
\end{figure}

\begin{example}[Binary numbers]
\label{ex1}
Let $D=\{ 0,1\}, X=[0,1]$ and $\varphi(i_1i_2\dots)=\sum_{k=1}^\infty i_k\cdot 2^{-k}$. Then $\varphi(s)=\varphi(t)$ if and only if either $s=t$ or there is an integer $n\ge 0$ and $i_1,\dots,i_n\in D$ such that $s=i_1\dots i_n 0\overline{1}$ and $t=i_1\dots i_n1\overline{0} $, or conversely. The corresponding automaton $G$ is drawn in Figure~\ref{fig1}. The initial state is $o$, and the input alphabet is the set $D^2$ of pairs of symbols. A pair of words or sequences $(u,v)$ is accepted if there is a~directed path of edges with labels $(u_1,v_1), (u_2,v_2),\dots$ in $G$ which starts in $o$. For $u=v$ this path will consist of loops from $o$ to itself. When $u$ is lexicographically smaller than $v$, the edge with label $(0,1)$ will lead to the state \textit{right}, and all following labels must be $(1,0)$. In this way, the automaton defines the equivalence of binary addresses and thus the topology of the interval.
\end{example}
Before we formally define our automata, we note how the equivalence of addresses is expressed on the level of words of length $n$.
For a~word $u=u_1\dots u_n$ the corresponding cylinder set in the symbolic space is defined as
\[
S_u=\{ u_1u_2\dots u_ns_{n+1}s_{n+2}\dots\, |\, s_k\in D \} \subseteq S.
\]

\begin{proposition}\label{prop1}
For any map $\varphi:S\to X$ and any $s,t\in S$ we have
\[
\varphi(s)=\varphi(t) \mbox{ if and only if }\ \varphi(S_{s_1\dots s_n})\cap \varphi(S_{t_1\dots t_n}) \not=\emptyset \ \mbox{ for }\ n=1,2,\dots
\]
\end{proposition}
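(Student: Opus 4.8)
The plan is to prove the two implications separately, treating the forward direction as essentially a tautology and concentrating the work on the converse. Throughout I will use the two structural facts that make the statement run: first, that with the quotient topology (or, in the paper's geometric setting, the metric of a self-similar set) the address map $\varphi$ is continuous; and second, that a sequence $\sigma^{(n)}\in S$ whose first $n$ symbols agree with those of a fixed address $s$ necessarily converges to $s$ in the product topology, since for every fixed coordinate the entries eventually stabilize.

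For the forward implication, suppose $\varphi(s)=\varphi(t)$. Since $s\in S_{s_1\dots s_n}$ and $t\in S_{t_1\dots t_n}$ for every $n$, the common value $\varphi(s)=\varphi(t)$ lies in both $\varphi(S_{s_1\dots s_n})$ and $\varphi(S_{t_1\dots t_n})$, so their intersection is nonempty; no topology is needed here. For the converse, I would assume the intersection $C_n=\varphi(S_{s_1\dots s_n})\cap\varphi(S_{t_1\dots t_n})$ is nonempty for every $n$ and choose a point $y_n\in C_n$. By definition of the cylinder images there are addresses $\sigma^{(n)}\in S_{s_1\dots s_n}$ and $\tau^{(n)}\in S_{t_1\dots t_n}$ with $\varphi(\sigma^{(n)})=\varphi(\tau^{(n)})=y_n$. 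By the convergence remark above, $\sigma^{(n)}\to s$ and $\tau^{(n)}\to t$, so continuity of $\varphi$ gives $y_n=\varphi(\sigma^{(n)})\to\varphi(s)$ and $y_n=\varphi(\tau^{(n)})\to\varphi(t)$. Thus the single sequence $(y_n)$ converges to both $\varphi(s)$ and $\varphi(t)$, and uniqueness of limits forces $\varphi(s)=\varphi(t)$.

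The one nontrivial point, and the step I expect to be the real obstacle, is that the converse is \emph{false} for an arbitrary map into an arbitrary space, so the hypotheses hidden in ``$\varphi\colon S\to X$'' must be made explicit and used. Continuity is automatic for the quotient topology, but uniqueness of limits requires $X$ to be Hausdorff; this is guaranteed in the present context because the realized spaces are compact metric. To see that Hausdorffness cannot be dropped, one may take $D=\{0,1\}$, collapse every address except $\overline{0}$ to a single point, and check that $s=\overline{0}$, $t=\overline{1}$ satisfy the right-hand condition while $\varphi(s)\neq\varphi(t)$ in the resulting non-Hausdorff quotient.

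An equivalent, more ``compact'' route to the converse would replace the sequence argument by the finite intersection property: the sets $C_n$ are nonempty, compact (continuous images of the compact cylinders, and closed because $X$ is Hausdorff) and nested, so $\bigcap_n C_n\neq\emptyset$; identifying $\bigcap_n\varphi(S_{s_1\dots s_n})=\{\varphi(s)\}$ and likewise for $t$ then yields $\{\varphi(s)\}\cap\{\varphi(t)\}\neq\emptyset$, hence $\varphi(s)=\varphi(t)$. I would present the elementary sequence version in the main text and relegate the compactness reformulation to a remark.
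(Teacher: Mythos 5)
Your proof is correct, and your forward direction coincides with the paper's, but your main argument for the converse takes a genuinely different route. The paper argues via nested compact sets: the sets $M_n=\varphi(S_{s_1\dots s_n})\cap\varphi(S_{t_1\dots t_n})$ are decreasing, closed and nonempty, so by compactness $\bigcap_n M_n\neq\emptyset$, and since $\bigcap_n S_{t_1\dots t_n}=\{t\}$ this intersection must equal $\{\varphi(t)\}$ as well as $\{\varphi(s)\}$ --- exactly the ``compact'' reformulation you relegate to a closing remark. Your main text instead picks witnesses $y_n\in C_n$, lifts them to $\sigma^{(n)}\to s$ and $\tau^{(n)}\to t$, and invokes continuity of $\varphi$ plus uniqueness of sequential limits. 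Your version is slightly more economical in its hypotheses (it needs only that $\varphi$ is continuous and $X$ is Hausdorff, not that $X$ is compact or that the images of cylinders are closed), and it makes explicit the step $\bigcap_n\varphi(S_{t_1\dots t_n})=\{\varphi(t)\}$, which the paper asserts without comment and which is precisely where Hausdorffness enters. Your observation that the statement is literally false for an arbitrary quotient --- with the Sierpi\'{n}ski-space counterexample obtained by collapsing $S\setminus\{\overline{0}\}$ to a point, where $s=\overline{0}$, $t=\overline{1}$ satisfy the intersection condition but have distinct images --- is correct and is a genuine refinement: the paper states the proposition ``for any map $\varphi$'' while its proof tacitly assumes $X$ compact Hausdorff, a property it only later verifies for automata-generated spaces in Proposition~\ref{prop2}. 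Both arguments are sound in the intended setting; the paper's buys brevity, yours buys transparency about exactly which topological hypotheses carry the converse.
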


 \begin{proof} If $x=\varphi(s)=\varphi(t)$ then $x$ belongs to all intersections on the right-hand side. On the other hand, $\bigcap_n S_{t_1\dots t_n}=\{ t\} $, and the $M_n=\varphi(S_{s_1\dots s_n})\cap \varphi(S_{t_1\dots t_n})$ form a~decreasing sequence of closed sets. If they are all nonempty, $M=\bigcap_n M_n$ is nonempty by compactness, and must coincide with $\{\varphi(t)\}$ as well as with $\{\varphi(s)\} $.
  \hfill \end{proof}

The sets $X_{t_1\dots t_n}=\varphi(S_{t_1\dots t_n})$ will be called the pieces of level $n$ for the address map $\varphi $. The proposition says that two sequences address the same point if and only if the corresponding pieces of level $n=1,2,\dots$ intersect each other. In example~\ref{ex1}, pieces of level $n$ are binary intervals $[x,x+2^{-n}]$ with $x=\varphi(t_1\dots t_n\overline{0})$. For $n=1$ we have $X_0=[0,\frac12]$ and $X_1=[\frac12,1]$ with one common endpoint. If we start with the edge with label $(i,j)=(0,1)$ then $X_j$ is on the right of $X_i$ which explains the name of the state in Figure~\ref{fig1}. For $n=2$, only $X_{i1}$ and $X_{j0}$ intersect, where the latter remains on the right. In this way, the automaton is constructed.
A state can be interpreted as a~relative position of the two pieces.

\begin{definition}[Topology-generating automaton]
\label{def1} A topology-generating automaton, denoted by $G=(V,D^2,E,o)$, consists of a~finite directed graph $(V,E)$ where vertices represent states and edges represent transitions labelled by the input alphabet $D^2 $, and $o$ denotes the initial state. The graph can contain loops and multiple edges which can be drawn as single edges with multiple labels. The following properties are required.
\begin{enumerate}
\item Each vertex $c$ has an outgoing edge and can be reached by a~directed path from $o$.
\item A vertex $c$ must not admit two outgoing edges with the same label $(i,j)$.
\item To each state $c$ there is an `inverse' state $c-$ such that for every edge between vertices $b$ and $c$ labelled with $(i,j)$, there is an edge labelled with $(j,i)$ between $b-$ and ${c-}$. The conditions $o-=o$ and $(c-)-=c$ for $c\in V$ are fulfilled.
\item The initial state $o$ has loops with label $(i,i)$ for each $i\in D$.
\end{enumerate}

We use the convention that an input $(i,j)$ is accepted at state $c\in V$ if there is an edge with label $(i,j)$ starting in $c$.
A pair of words $(i_1\dots i_n, j_1\dots j_n)$ or sequences $(i_1i_2\dots, j_1j_2\dots)$ is accepted if there is a~directed path of edges starting in $o$ with labels $(i_1,j_1),(i_2,j_2),\dots$. The language of accepted pairs of words and the set of accepted pairs of sequences are denoted by $L(G)\subseteq (D^2)^*$ and $L^\infty(G)\subseteq S^2 $, respectively.
\end{definition}

We used common terminology as in~\cite{BeRi,Epstein}. While automata for number systems as treated by Frougny and Sakarovitch~\cite{FS09} use the input alphabet $D$, we need $D^2 =D\times D$ to process pairs of symbol sequences. The absence of a~set of final states was explained by Thurston as follows: ``The language $L(G)$ is prefix-closed if every prefix of a~word in $L(G)$ is also in $L(G)$, or in other words, if every non-accept state has arrows only to other non-accept states. In such a~case, we may collapse all non-accept states in a~single fail state, with all arrows leading back to itself. It is convenient to omit the fail state and all roads leading to it. Whenever an input gives you directions where there is no corresponding arrow, you immediately fail with no chance for reinstatement.''~\cite[p. 31]{T89}.

According to Proposition~\ref{prop1} we want to describe the relation $\varphi(S_u)\cap \varphi(S_v) \not=\emptyset$ for words $u,v$ on $D^2 $. This relation is prefix-closed since $S_{u'}\supseteq S_u$ for any prefix $u'$ of $u$. So Thurston's convention applies here.
Moreover, if the relation is fulfilled for $(u,v)$ then it must also be fulfilled by $(ui,vj)$ for some $(i,j)\in D^2 $, since $S_u=\bigcup_{i\in D} S_{ui}$. This explains the outgoing edges in property 1. These edges guarantee that each directed path of edges can be extended indefinitely, in a~finite graph through directed cycles. Each vertex should have an incoming edge and should be reached from $o$ because otherwise it is obsolete.

Property 2 is quite natural. It says that an accepted pair of addresses $(s,t)$ determines a~unique directed path in the graph. Property 3 expresses the symmetry of the relation $\varphi(S_u)\cap \varphi(S_v) \not=\emptyset $. If $(u,v)$ is accepted at a~state $c$ then $(v,u)$ must also be accepted at a~state which we call $\pi(c)$. Then if $(ui,vj)$ is also accepted, the same is true for $(vj,ui)$. As discussed in Section~\ref{topsel}, property 4 can be replaced by a~weaker condition. It is needed to accept pairs of equal sequences $(s,s)$, and is equivalent to a~self-similarity condition.

Given an automaton with these properties, we formally define the quotient space $X$ as the set of all equivalence classes $x_s$ of addresses $s$. The address map $\varphi$ will assign to each address $s\in S$ its equivalence class.
\[
\varphi (s)= \{ t\in S \, |\, (s,t) \mbox{ is accepted by }G\} = x_s.
\]
One way to define the quotient topology on $X$ is to say that a~sequence $(x_{s^n})_{n=1,2,\dots}$ with $s^n \in S$ converges to $x_s$ in $X$ if and only if $s^n$ converges to $s$ in $S$. However, there can be $t$ with $x_s=x_t$ and $t^n$ with $x_{s^n}=x_{t^n}$. We show that the convergence does not depend on the choice of representatives.

\begin{proposition}
 
The following holds for any topological automaton $G$.
\begin{enumerate}
\item[ (i)] Let $(s^n)_{n=1,2,\dots}$ and $(t^n)_{n=1,2,\dots}$ be convergent sequences of addresses with limit sequences $s$ and $t$ in $S$. If $(s^n,t^n)$ is accepted by $G$ for $n=1,2,\dots $. then $(s,t)$ is also accepted by $G$.
\item[(ii)] The quotient space $X$ generated by $G$ is a~compact Hausdorff space. It is metrizable.
\end{enumerate}
\label{prop2}
\end{proposition}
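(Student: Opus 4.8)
The plan is to establish (i) first, since the closedness it encodes is the whole engine behind (ii); the remainder of (ii) is then formal topology. First I would translate convergence in $S=D^\NN$ into agreement of prefixes: $s^n\to s$ means that for every $N$ there is an $n_0$ with $s^n_k=s_k$ for all $k\le N$ and $n\ge n_0$, and likewise for $t^n\to t$. Fixing $N$ and taking $n$ large enough that both $N$-prefixes have stabilized, the hypothesis that $(s^n,t^n)$ is accepted together with the prefix-closedness of $L(G)$ gives that the word pair $(s_1\dots s_N,t_1\dots t_N)$ lies in $L(G)$. Since $N$ is arbitrary, every finite prefix of $(s,t)$ is accepted.

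The one genuine step is then to glue these finite accepting paths into a single infinite one. Here I would use determinism (property 2): for each $N$ there is a unique path from $o$ reading $(s_1,t_1),\dots,(s_N,t_N)$, and uniqueness forces the level-$(N{+}1)$ path to extend the level-$N$ path, so the terminal states are consistent and assemble into one infinite path from $o$; hence $(s,t)\in L^\infty(G)$. (Without determinism, König's lemma applied to the finitely branching tree of accepting prefixes would do the same.) This is precisely the assertion that $L^\infty(G)$ is a closed subset of $S\times S$, and it also settles the representative-independence flagged before the proposition: if $x_{s^n}=x_{t^n}$ and $s^n\to s$, $t^n\to t$, then $(s,t)$ is accepted, i.e.\ $x_s=x_t$.

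For (ii), $S$ is compact by Tychonoff's theorem and metrizable, e.g.\ via $d(s,t)=2^{-\min\{k\,:\,s_k\ne t_k\}}$. The relation $L^\infty(G)$ is reflexive by property 4 (the loops $(i,i)$ at $o$ accept every $(s,s)$) and symmetric by property 3 (inverse states), so $\varphi$ is the quotient map of an equivalence relation and $X=\varphi(S)$ is compact as a continuous image of a compact space. By part (i) this relation is closed in $S\times S$, and the standard fact that the quotient of a compact Hausdorff space by a closed equivalence relation is again compact Hausdorff yields that $X$ is Hausdorff.

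For metrizability I would invoke that a compact Hausdorff $X$ is metrizable iff $C(X)$ is separable: the pullback $f\mapsto f\circ\varphi$ embeds $C(X)$ isometrically (in the supremum norm) onto the closed subspace of $\varphi$-invariant functions in $C(S)$, which is separable because $S$ is compact metric; hence $C(X)$ is separable and $X$ is metrizable. Equivalently, a countable point-separating subfamily of $C(X)$ embeds $X$ into the Hilbert cube $[0,1]^\NN$. The main obstacle is concentrated entirely in (i): producing the infinite accepting path from its accepting finite prefixes. Once that closedness is in hand, the passage to a compact Hausdorff metrizable quotient is the only delicate formal point, and it works exactly because $S$ is compact — so part (i) and the compactness of $S$ are the two facts that have to meet.
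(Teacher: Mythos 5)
Your proposal is correct, and part (i) follows the same prefix-stabilization idea as the paper; the difference is that you make explicit the one step the paper leaves implicit, namely how acceptance of every finite prefix $(s_1\dots s_N,t_1\dots t_N)$ yields acceptance of the infinite pair. The paper routes this through Proposition~\ref{prop1} (nested nonempty compact intersections), whereas you glue the finite accepting paths directly using determinism (property 2 forces each level-$(N{+}1)$ path to extend the level-$N$ path), with K\"onig's lemma as a fallback; this is arguably the cleaner and more self-contained treatment, since it works purely at the level of the automaton. In part (ii) you reach the same conclusions by different standard lemmas: the paper deduces Hausdorffness informally from ``limits are uniquely determined'' and metrizability from $S$ having a countable base, while you invoke the theorem that a quotient of a compact Hausdorff space by a closed equivalence relation is compact Hausdorff, and obtain metrizability from separability of $C(X)$ via the isometric embedding $f\mapsto f\circ\varphi$ into $C(S)$. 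Your version is more careful on exactly the points where the paper is terse (the passage from second countability of $S$ to metrizability of the quotient is not automatic for general quotients, and your argument sidesteps that issue); the paper's version buys brevity by leaning on the compactness argument already recorded in Proposition~\ref{prop1}. Both are valid.
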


 \begin{proof} 
 \begin{enumerate}

    \item [(i)] In $S$ we have coordinatewise convergence. Thus the first $k$ digits of $s^n$ must agree with the digits $s_1s_2\cdots s_k$ of the limit sequence $s$ for all $n$ greater than some $n(k)$. Similarly, $t^n$ starts with $t_1t_2\cdots t_k$ for large enough $n$. Since $(s^n, t^n)$ is accepted by $G$ for all $n$, this implies $\varphi(S_{s_1\cdots s_k})\cap \varphi(S_{t_1\cdots t_k}) \not=\emptyset$. This holds for all $k$, so $(s,t)$ is accepted by $G$.
\item [(ii)] By definition of the quotient topology, $\varphi$ is continuous. The space $S$ is compact, and a~continuous image of a~compact space is always compact. A space is Hausdorff if limits are uniquely determined. This is what we proved in (i) for the quotient topology of $X$. Since $S$ has a~countable base, this holds for $X$ and implies that $X$ is metrizable.
 \end{enumerate}
\end{proof}

In section~\ref{inver} we shall give a~more concrete construction of the space $X$.
First we present small examples of topological automata. \textit{In the whole paper we confine ourselves to equivalence relations with finite equivalence classes, with size smaller than a~constant $C$.}
Thus we shall exclude identifications of periodic addresses:

\begin{proposition}
  Let all equivalence classes of addresses be finite, and let $(s,t)$ be a~pair of different sequences accepted by the automaton $G$. Then $s$ is not a~periodic sequence and cannot be written as $s=ut$ for a~word $u\in D^* $.
\label{prop2b}
\end{proposition}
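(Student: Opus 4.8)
The plan is to turn either of the two forbidden shapes of $s$ into an infinite equivalence class, contradicting the standing hypothesis that $L^\infty(G)$ is an equivalence relation with finite classes. The engine of the argument is a self-similarity property of $G$ that I would record first: if a pair $(a,b)$ is accepted and $w=w_1\dots w_\ell\in D^*$ is any word, then $(wa,wb)$ is also accepted. To see this I would read the letters $(w_1,w_1),\dots,(w_\ell,w_\ell)$ starting from the initial state $o$; by property 4 of Definition~\ref{def1} each of these labels sits on a loop at $o$, and by property 2 that loop is the unique edge at $o$ carrying this label, so the path returns to $o$ and can be continued by the accepting path of $(a,b)$. Equivalently, $\varphi(a)=\varphi(b)$ forces $\varphi(wa)=\varphi(wb)$.

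For the first assertion I would assume, for contradiction, that $s=\overline{u}$ is periodic with $\ell=|u|\ge1$, so that $us=s$. Prepending $u$ to the accepted pair $(s,t)$ and using $us=s$ shows that $(s,ut)$ is accepted; prepending $u$ once more gives $(s,u^2t)$, and by induction $(s,u^nt)$ is accepted for every $n\ge0$. Hence all sequences $u^nt$ belong to the class $\varphi(s)$. They are pairwise distinct, since $u^nt=u^{n'}t$ with $n>n'$ would give $t=u^{\,n-n'}t$ and therefore $t=\overline{u}=s$, contrary to $s\neq t$. This makes $\varphi(s)$ infinite, the desired contradiction.

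For the second assertion I would assume $s=ut$ with $u\in D^*$; as $s\neq t$ the word $u$ is nonempty. Prepending $u$ to $(s,t)$ shows $(us,ut)$ is accepted, and $ut=s$ rewrites this as the accepted pair $(us,s)$, i.e.\ $\varphi(us)=\varphi(s)$. Applying the self-similarity property repeatedly yields $\varphi(u^{k+1}s)=\varphi(u^ks)$ for all $k$, so $\varphi(u^ks)=\varphi(s)$ for every $k\ge0$ by transitivity. The sequences $u^ks$ are pairwise distinct, because $u^ks=u^js$ with $k>j$ would force $s=u^{\,k-j}s=\overline{u}$, which the first assertion has already excluded. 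Once more $\varphi(s)$ turns out to be infinite.

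The step I expect to be the main obstacle is the opening self-similarity lemma, where property 2 is essential: without it one could not guarantee that the equal letters $(w_k,w_k)$ are consumed along the loops at $o$, and hence that the path genuinely returns to $o$. The two remaining subtleties are minor but worth flagging in the writeup: the distinctness of the generated families reduces in both cases to the absence of periodicity, so the proof that $s\neq ut$ must be placed after (and is allowed to use) the proof that $s$ is aperiodic; and the passage to $\varphi(u^ks)=\varphi(s)$ in the second part uses transitivity, which is legitimate precisely because we have restricted attention to equivalence relations just before the proposition.
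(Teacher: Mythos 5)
Your proposal is correct and follows essentially the same route as the paper: in both cases one uses property 4 (prepending a word $w$ as a pair $(w,w)$ of loops at $o$) to generate the infinite families $u^nt$, respectively $u^ks$, inside a single equivalence class, contradicting finiteness. Your treatment is slightly more careful than the paper's on the pairwise distinctness of the generated sequences, which the paper instead handles by invoking the closedness of the relation to place $\overline{u}$ in the class; this is a difference of bookkeeping, not of method.
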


 \begin{proof} If a~periodic sequence $s=\overline{w}$ is identified with $t$, then $s=ws$ is identified with $wt$ by property 4. By induction $s$ is also identified with $w^k t$ for $k=2,3,\dots\, $. Thus the resulting equivalence class is infinite.

If $t$ is identified with $ut$ then by property 4 $ut$ is identified with $u^2 t$ and $u^2 t$ with $u^3 t$ and so on. Again we have an infinity of equivalent addresses $u^k t, k=0,1,2,\dots$, and $\overline{u}$ will also belong to this class because the equivalence relation is closed according to Proposition~\ref{prop2}. So we have the same conclusion as above.
 \hfill \end{proof}

A particular case appears when there is a~path from $o$ to $o$ starting with labels $i_1\not= j_1$. If $u,v$ are the words addressing this path, then not only $\overline{u}$ and $\overline{v}$ are identified. Every sequence in $\{ u,v\}^\infty$ will belong to the equivalence class which therefore has the cardinality of the continuum. There are important number systems like $\beta$-numeration with a golden mean base which have this property. They are also interesting from the topological viewpoint but require methods that are not studied here. For that reason, we assume that the loops of property 4 are the only incoming edges to $o$.

\section{Automata with two states}\label{auto2}
Figure~\ref{fig1} showed a~topological automaton with two states - the initial state will not be counted. It can be easily modified to describe the decimal numbers with $D=\{ 0,1,\dots,9\}$: the edge from $o$ to $right$ will have labels $(0,1),(1,2),(2,3),\dots,(8,9)$, and the loop at state $right$ gets the label $(9,0)$. Similarly we can describe the number system with respect to any positive integer base.

Let us go back to $D=\{ 0,1\}$ and look for modifications of Figure~\ref{fig1}. Because of properties 3 and 4, the labels cannot be changed, except for one label of a~loop, say at $right$. If we replace $(1,0)$ by $(0,1)$ or $(0,0)$, however, the infinite path from $o$ to $right$ and then traversing the loop will determine the sequences $s,t$ with $s=\overline{0}$. This contradicts Proposition~\ref{prop2b}. If we take $(1,1)$ for that label, we would have $t=\overline{1}$. The conclusion is that for the graph of Figure~\ref{fig1} no other labeling defines a~topological automaton.

\begin{figure}[h!t]
\begin{center}
a \includegraphics[width=0.3\textwidth]{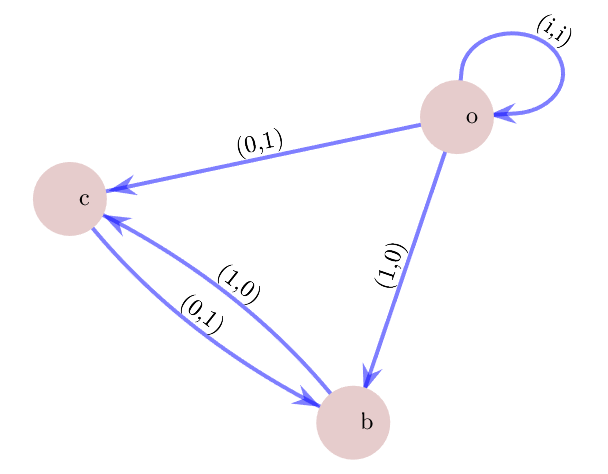} \
b \includegraphics[width=0.3\textwidth]{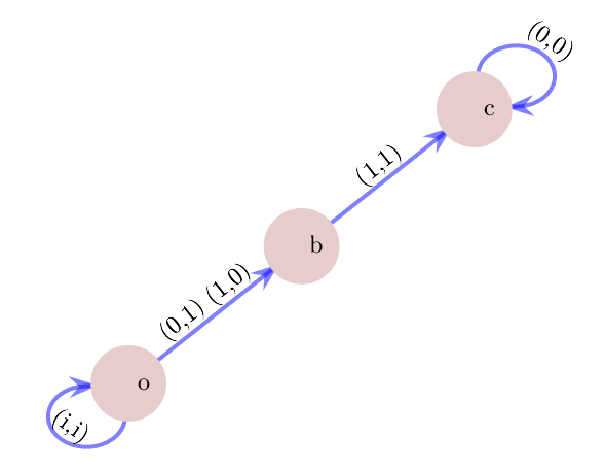} \
c \includegraphics[width=0.3\textwidth]{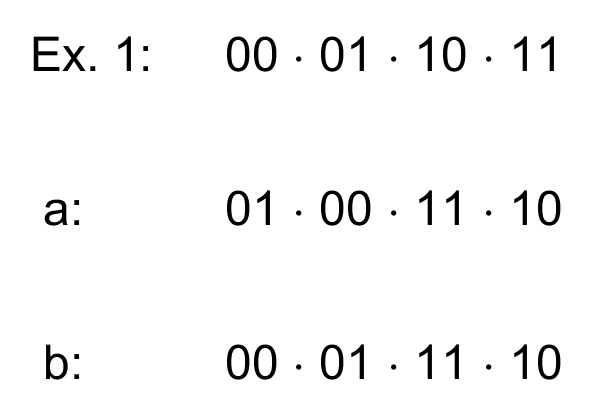}
\end{center}
\caption{Topological automata for two number systems. \textbf{a:} base -2, \textbf{b:} symbolic dynamics of the tent map. \textbf{c:} The order of second-level pieces in the interval $X$ for Figure~\ref{fig1},~a,~and~b.}\label{fig2}
\end{figure}

So we have to change the graph: we replace the loops by arrows to the inverse state, as shown in Figure~\ref{fig2}\textcolor{blue}{a}. The label $(i,j)$ for the arrow from $c$ to $b$ can be chosen, the other must be $(j,i)$. If we take $(0,0)$ or $(1,1)$, we shall again have sequences $s,t$ with $s=\overline{0}$ or $t=\overline{1}$. The label $(1,0)$ would give $s=\overline{01}$.
The only possible label for the edge $(c,b)$ is $(0,1)$. This automaton describes the number system with base $-2$, with address map $\varphi(i_1i_2\dots)=\sum_{k=1}^\infty i_k\cdot (-2)^{-k}$ and $X=[-\frac23, \frac13 ]$. The `identification formula' is $0\overline{01}\sim 1\overline{10}$ while it was $0\overline{1}\sim 1\overline{0}$ for binary numbers.

The third graph has two self-inverse states $b,c$. Without loss of generality there is a~double arrow from $o$ to $b$ which must have labels $(0,1)$ and $(1,0)$, because of properties 2-4. There must be an arrow from $b$ to $c$. Arrows between self-inverse states can only have labels of the form $(i,i)$. Let us take the label $(1,1)$ for the edge $(b,c)$. By property 1 there is an arrow starting in $c$. If it leads back to $b$, we obtain a~contradiction with periodic $s$. So this edge must be a~loop, with label $(0,0)$ to avoid periodicity. See Figure~\ref{fig2}\textcolor{blue}{b}. The identification formula is $01\overline{0}\sim 11\overline{0}$. This automaton represents the symbolic dynamics of the tent map $T:[0,1]\to [0,1], \ T(x)=2x$ for $x\le\frac12$ and $T(x)=2(1-x)$ for $x\ge\frac12 $. We have $\varphi^{-1}(x)=j_0j_1j_2\dots $ with $j_k=0$ if $T^k (x)\le\frac12$ and $j_k=1$ if $T^k (x)\ge\frac12 $. The critical point $x=\frac12$ has two addresses. The tent map is conjugate to the quadratic function $g(x)=x^2 -2$ on its Julia set $[-2,2]$ so $X$ could be called a~Julia set in this case.
An equivalent automaton is obtained by exchanging $(0,0)$ and $(1,1)$ for which we have to put the tent map upside down. We thus extended an old result of Hata~\cite[p. 399]{Hata} and Bandt and Keller~\cite[Proposition 4]{BK} to automata-generated spaces.

\begin{proposition}\label{prop3}
There are three different topological automata with two states (not counting $o$) and a~two-digit alphabet. They describe binary numbers, the number system with base $-2$ and the symbolic dynamics of the tent map. In all cases, the generated space is an interval. \hfill $\Box$
\end{proposition}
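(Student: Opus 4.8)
The plan is to classify exhaustively all admissible labelings, organizing the argument by the structure imposed by properties 1--4 together with the standing finiteness assumption, which via Proposition~\ref{prop2b} forbids any accepted pair $(s,t)$ with $s\neq t$ in which either coordinate is a purely periodic sequence (apply Proposition~\ref{prop2b} to $(t,s)$, legitimate by the symmetry of property~3, to rule out periodicity of the second coordinate). Write $V=\{o,b,c\}$ for the three states. Since the only edges entering $o$ are its two loops $(0,0),(1,1)$ (property~4 and the standing assumption), every other edge stays inside $\{b,c\}$ or runs from $o$ into $\{b,c\}$; in particular, once a path leaves $o$ through a non-diagonal label it can never return, so any accepted pair with $s\neq t$ is read off a path that leaves $o$ immediately (after possibly idling on the diagonal loops) and then wanders forever in $\{b,c\}$. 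The inverse involution $c\mapsto c-$ of property~3 fixes $o$, so on the two-element set $\{b,c\}$ it is either the transposition ($b-=c$) or the identity ($b,c$ both self-inverse); these give the two main cases.

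First I would pin down the edges out of $o$. The only labels not already used by the diagonal loops are $(0,1)$ and $(1,0)$, and reachability of $b$ and $c$ (property~1) forces both of them to be emitted. In the transposition case property~3 sends them to the two different states, so up to renaming $b\leftrightarrow c$ we get an edge $(0,1)$ from $o$ to $b$ and an edge $(1,0)$ from $o$ to $c$. In the self-inverse case property~3 pairs them at the same target, giving a double arrow with labels $(0,1),(1,0)$ from $o$ to $b$ (up to renaming), after which $c$ can only be reached from $b$.

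The core of the argument is to determine the remaining edges inside $\{b,c\}$, and here Proposition~\ref{prop2b} does the eliminating. In the transposition case a loop at $b$ with label $(i,j)$ produces the accepted pair $(0\overline{i},1\overline{j})$, which is admissible only when $i=1$ and $j\neq1$, i.e.\ only for the label $(1,0)$; traversing a cycle $b\to c\to b$ with labels $(i,j),(j,i)$ produces $(0\overline{ij},1\overline{ji})$, admissible only for $(i,j)=(0,1)$. One then checks that $b$ cannot carry two outgoing edges at once: any second edge creates a reachable cycle whose first or second coordinate is purely periodic (for instance, adjoining the arrow $(0,1)$ to the loop $(1,0)$ makes the loop $(0,1)$ at $c$ reachable along an all-zero first coordinate, giving $s=\overline0$). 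Thus the transposition case yields exactly two automata, the loop version (binary numbers, Example~\ref{ex1}) and the $2$-cycle version (base $-2$). In the self-inverse case the same periodicity test shows that an edge between the self-inverse states $b$ and $c$ cannot carry a label $(i,j)$ with $i\neq j$ (a double arrow always lets one build a purely periodic coordinate), so the arrow from $b$ to $c$ must be some $(i,i)$, which up to the digit swap $0\leftrightarrow1$ we take to be $(1,1)$; a further edge out of $c$ back to $b$ again forces periodicity, so $c$ must carry a loop, and among $(0,0),(1,1)$ only $(0,0)$ survives, giving the tent map. This exhausts all cases and leaves precisely the three listed automata, which are pairwise non-isomorphic because their identification formulas $0\overline1\sim1\overline0$, $0\overline{01}\sim1\overline{10}$ and $01\overline0\sim11\overline0$ differ.

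It remains to see that each generated space is an interval. For this I would match $L^\infty(G)$ against the equivalence relation $L_\varphi$ of the classical address map already recorded for each automaton---binary expansion onto $[0,1]$, base $-2$ expansion onto $[-\tfrac23,\tfrac13]$, and the kneading address of the tent map onto $[0,1]$---checking that the accepted pairs are exactly the $\varphi$-equivalent pairs; since the equivalence relation determines the quotient topology by Proposition~\ref{prop2} and $X$ is compact while the target interval is Hausdorff, the induced continuous bijection is a homeomorphism. The main obstacle is not any single computation but the bookkeeping of completeness in the case analysis: one must be sure that the WLOG reductions (the renaming $b\leftrightarrow c$ and the digit swap $0\leftrightarrow1$) are genuine automaton isomorphisms, and, above all, that every admissible multi-edge configuration has really been ruled out by exhibiting a reachable purely periodic coordinate, since it is exactly these ``extra edge'' configurations, rather than the headline three, that threaten the count.
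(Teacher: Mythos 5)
Your proposal is correct and follows essentially the same route as the paper: a case split on whether the two non-initial states are exchanged or fixed by the inverse involution of property~3, followed by elimination of labelings via the no-periodic-address criterion of Proposition~\ref{prop2b}, arriving at exactly the binary, base $-2$, and tent-map automata. Your treatment is, if anything, slightly more explicit than the paper's about why no extra edges can be adjoined and about verifying the interval identification at the end, but the underlying argument is the same.
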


\begin{figure}[h!t]
\begin{center}
\includegraphics[width=0.5\textwidth]{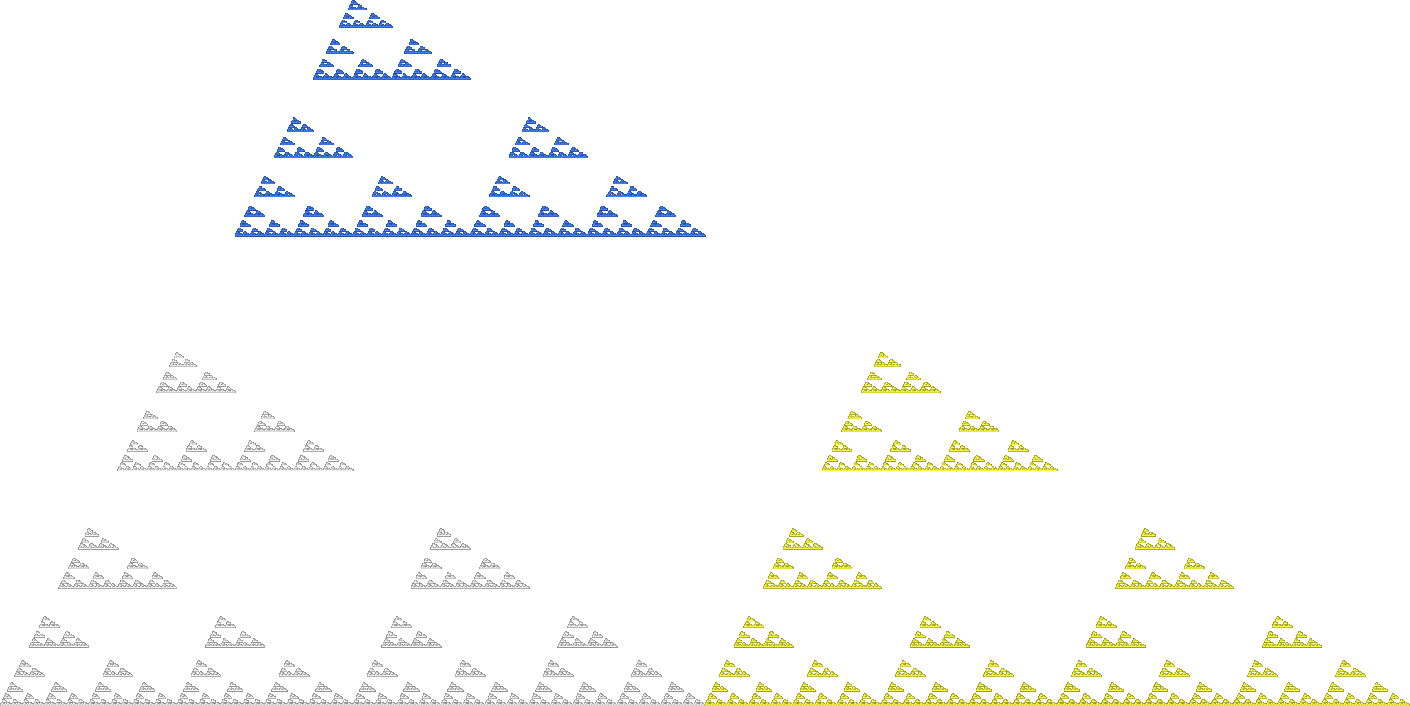}
\end{center}
\caption{Adding digit 2 to the automaton of binary numbers we generate a~disconnected space.}\label{fig3}
\end{figure}

Now let us take alphabets with more than two digits. When we add a~digit 2 in Figure~\ref{fig1} without adding any labels with 2, except for the mandatory loop $(2,2)$ at $o$, we obtain a~space $X$ where $X_2$ is disconnected from $X_0\cup X_1$, and $X_{02}$ is disconnected from $X_{00}\cup X_{01}$ etc. One metric realization is sketched in Figure~\ref{fig3}. The space consists of countably many intervals and a~continuum of isolated points since it has a Hausdorff dimension greater than 1. In this way, any connected space can be transformed into an archipelago by adding one more digit. For this reason, we focus our study on connected spaces. Disconnected spaces described by two-state automata and their Lipschitz equivalence were studied by Zhu and Yang~\cite{ZY18}.

When we use digit 2 and add the label $(0,2)$ on the edge $(o,right)$ (and $(2,0)$ on $(o,left)$ because of property 3) then we get a~dendroid, a~Hata tree, as indicated in Figure~\ref{fig4}. The formula for double addresses would be $0\overline{1}\sim 1\overline{0}, 0\overline{1}\sim 2\overline{0}$. However, the automaton in Figure~\ref{fig1} is not complete. It does not describe the equivalence $1\overline{0}\sim 2\overline{0}$ which must hold by transitivity. The complete automaton with three states is shown in Figure~\ref{fig4}. Since our automata accept pairs of addresses, incompleteness is possible for triple and multiple addresses. In the context of Figure~\ref{fig1} with added digits and edge labels, this will always happen when two labels at the same edge share a~digit, like $(i_1,j),(i_2,j)$.

\begin{figure}[h!t]
\begin{center}
\includegraphics[width=0.54\textwidth]{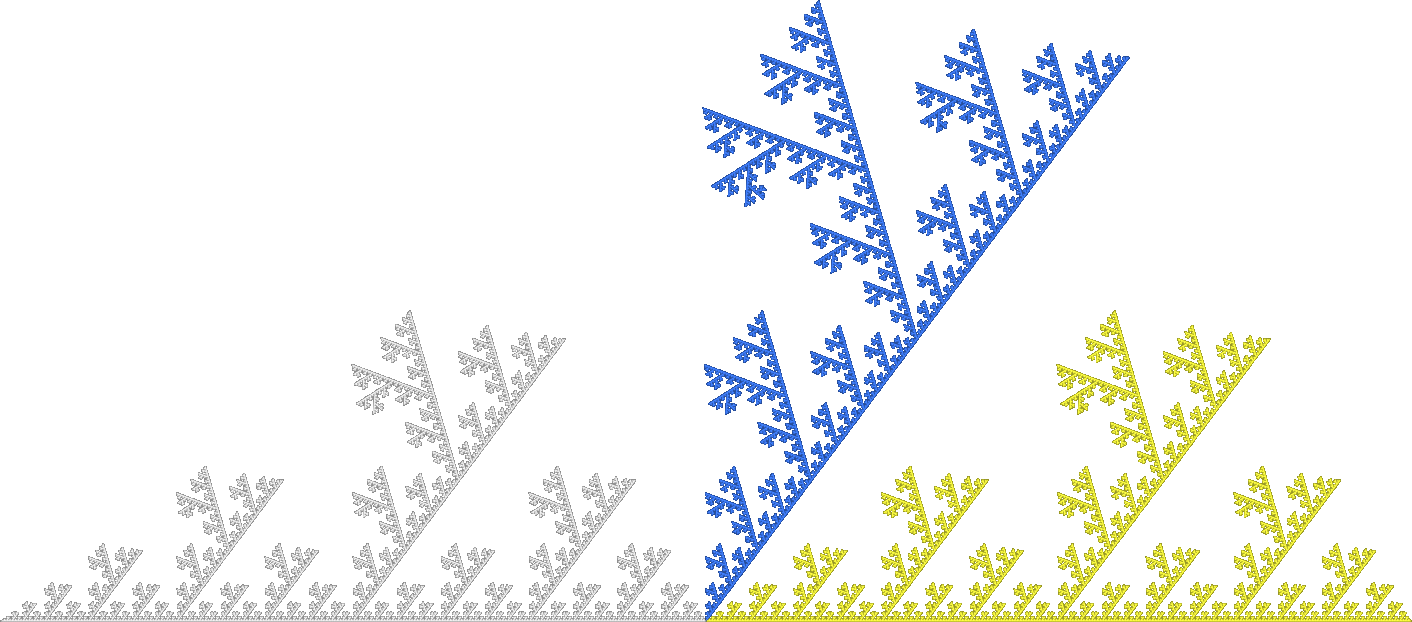} \
\includegraphics[width=0.44\textwidth]{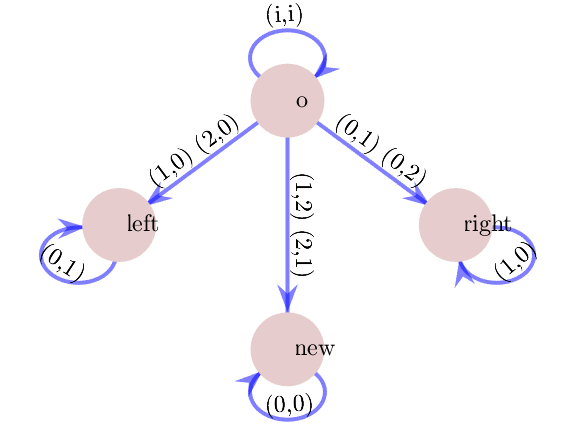}
\end{center}
\caption{Left: A Hata tree is generated from the automaton in Figure~\ref{fig1} by adding digit 2 and two edge labels. Right: The triple address at the branching point in the middle requires one more state for completeness. }\label{fig4}
\end{figure}

\begin{proposition}\label{prop4}
Let $G$ be a~topological automaton with $m$ digits and two states that are inverse to each other. Suppose that $G$ completely describes the equivalence of addresses and that $X$ is connected. Then $X$ is an interval, and $G$ defines the double addresses of a~number system, either with base $m$ or with $-m$.
\end{proposition}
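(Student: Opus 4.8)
The plan is to read the automaton as a prescription for how the $m$ level-one pieces $X_0,\dots,X_{m-1}$ are glued together, and to exploit the rigidity forced by having only two non-initial states. I write $c$ and $b=c-$ for the two states, which are interchanged by the inverse symmetry of property~3; since we assume that the loops $(i,i)$ are the only edges entering $o$, every other edge runs from $o$ into $\{b,c\}$ or inside $\{b,c\}$. By Proposition~\ref{prop1}, an edge $o\xrightarrow{(i,j)}c$ (or $o\to b$) records precisely the relation $X_i\cap X_j\neq\emptyset$, so the labels leaving $o$ encode the adjacency pattern of the level-one pieces, while the edges inside $\{b,c\}$ encode how a shared boundary point is approached under the self-similar refinement of the pieces (Section~\ref{topsel}).

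Next I would use connectedness to show that the adjacency pattern is a single \emph{chain}. Orient the states so that $c$ is reached by the edges $(i,j)$ placing $X_j$ immediately to the right of $X_i$, and $b=c-$ by the reversed labels. Connectedness of $X$ forces the adjacency graph on the digits to be connected, hence to carry a spanning tree and at least $m-1$ adjacencies. To see that it is exactly a path I would exclude the two alternatives. A genuine branch, where one piece meets three others at a common point, creates an address with three representatives whose three pairwise identifications must all be accepted; by completeness and Proposition~\ref{prop1} the transitive identification cannot be encoded using only the inverse pair $\{b,c\}$ — this is exactly the phenomenon that forced an extra state for the Hata tree in Figure~\ref{fig4}. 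A cycle is excluded by periodicity: the wrap-around adjacency, fed into the forced continuation at $c$, yields an accepted pair with a periodic component $\overline{w}$, against Proposition~\ref{prop2b}. Hence the pieces are linearly ordered, there are exactly $m-1$ rightward adjacencies, and they supply precisely $m-1$ edges $o\to c$ together with their mirror images $o\to b$.

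The heart of the matter, and the step I expect to be the main obstacle, is to prove that $c$ carries a \emph{unique} outgoing edge, whose label is moreover the pair (largest digit, smallest digit) of the chain. This is delicate because a naive finiteness count does not forbid several outgoing edges: two loops at $c$, for instance, keep every equivalence class of size two while fattening an intersection $X_i\cap X_j$ into a Cantor set. I would rule out such configurations by combining completeness, connectedness, and Proposition~\ref{prop2b}: multiple continuations at $c$ force adjacent pieces to meet in more than one point, which either destroys the chain just established or, on iterating self-similarity (Section~\ref{topsel}), produces a periodic identification. The same mechanisms pin the label, since a diagonal label or a label other than the extreme-digit pair reintroduces a periodic component or breaks the endpoint matching of the chain. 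I note that this uniqueness statement and the chain property are mutually reinforcing, so in practice they would be derived together; disentangling them cleanly is the chief difficulty.

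With a single outgoing edge at $c$ the only two possibilities remain: a loop $c\to c$ or a swap $c\to b$, the latter accompanied by the inverse swap $b\to c$ from property~3. A loop yields the identification pattern $i\overline{m-1}\sim(i+1)\overline{0}$ and reproduces the standard base-$m$ automaton of Figure~\ref{fig1}; a swap yields the alternating pattern realized by base $-m$, generalizing Figure~\ref{fig2}a. In either case I would finish by exhibiting the corresponding expansion map, $\varphi(s)=\sum_{k}s_k m^{-k}$ respectively $\varphi(s)=\sum_{k}s_k(-m)^{-k}$, and checking via Propositions~\ref{prop1} and~\ref{prop2} that it realizes exactly the language $L(G)$. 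This identifies $X$ with a closed real interval and exhibits $G$ as the automaton for the double addresses of the base-$m$ or base-$(-m)$ number system, completing the proof.
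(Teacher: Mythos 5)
Your proposal follows essentially the same route as the paper: the adjacency graph on the digits read off from the labels of the edge $o\to c$, connectedness giving at least $m-1$ adjacencies, completeness forcing all first and all second label coordinates to be distinct so that the adjacencies form a path, and Proposition~\ref{prop2b} pinning the remaining label to $(m-1,0)$, yielding base $m$ or $-m$. The uniqueness of the continuation at $c$, which you flag as the chief difficulty, is the one point the paper passes over by simply restricting to the two graph shapes of Figures~\ref{fig1} and~\ref{fig2}a, so your explicit concern there is legitimate but does not mark a divergence of method.
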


 \begin{proof} We consider only the case of Figure~\ref{fig1}. The case of Figure~\ref{fig2}a is similar. Consider an undirected graph $H$ with vertex set $D$ and with edges ${i,j}$ for all labels $(i,j)$ or $(j,i)$ of the edge from $o$ to \textit{right.} A label $(i,i)$ is not possible since then the state \textit{right} would be self-inverse. Since $X$ is connected, the graph $H$ is connected and thus has at least $m-1$ edges. Since $G$ is complete, the first and second coordinates of the labels must be different, as noted above. That implies that $H$ has exactly $m-1$ edges which form an undirected path. By renaming the digits we can assume that the labels are $(0,1), (1,2),\dots,(m-2,m-1)$. Then the loop at state \textit{right} can have only label $(m-1,0)$ because of the Proposition~\ref{prop2b}. This gives the number system with base $m$, with identifications $k\overline{m-1}\sim (k+1)\overline{0}$ for $k=0,\dots,m-2$.
\hfill \end{proof}

The case of automata with two self-inverse states is more interesting. Generalizing the tent map example, we find for every $m$ automata generating the symbolic dynamics of zigzag functions on $[0,1]$. For even $m>2$, an edge from $o$ to $c$ has to be added to Figure~\ref{fig2}\textcolor{blue}{b} while the automaton for odd $m$ looks like Figure~\ref{fig1}. Moreover, quite different connected spaces $X$ can be generated.

\begin{example}[An exotic space]\label{ex2}
Figure~\ref{fig5} shows an automaton with two self-inverse states beside $o$ and symbols $0,1,2$. All multiple addresses come as triplets. When we cancel the label $(2,2)$ at $c$, then $X$ will be the dendroid at the top right in Figure~\ref{fig4}. The label $(2,2)$ makes the space more complicated since $X_0\cap X_1\cap X_2$ is a~Cantor set, given by all triple addresses $01s\sim 11s\sim 21s$ with $s\in \{ 0,2\}^\infty $. Actually, $X$ is not homeomorphic to a~subset of the plane, as we shall prove in Proposition~\ref{plan}. Figure~\ref{fig5} shows graph approximations of $X$ where the vertex set is $D^n$ and two vertices $u,v$ are connected by an edge if $(u,v)$ is accepted by the automaton. Such approximations will be discussed in Section~\ref{inver}.
\end{example}

\begin{figure}[h!t]
\begin{center}
\includegraphics[width=0.45\textwidth]{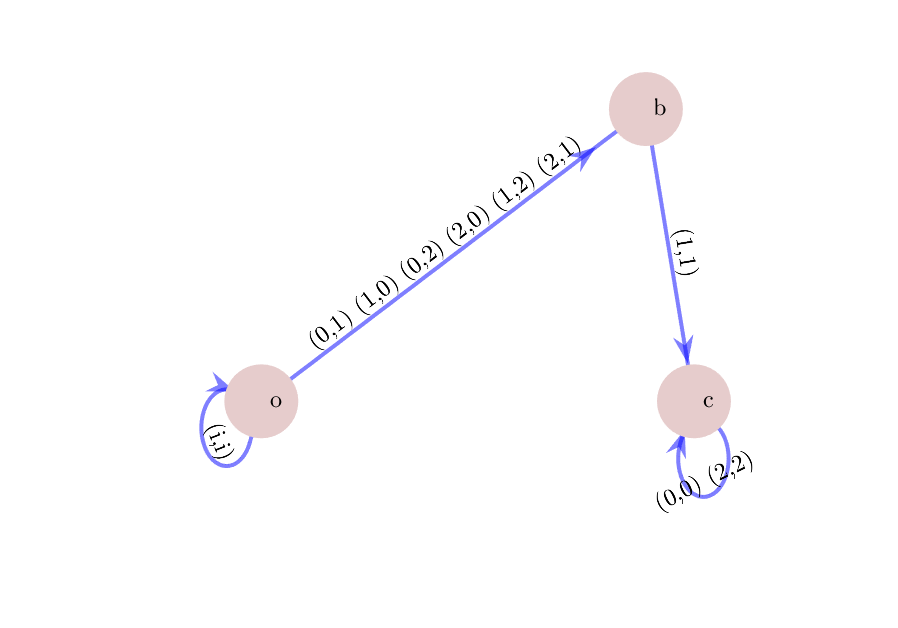} \
\includegraphics[width=0.45\textwidth]{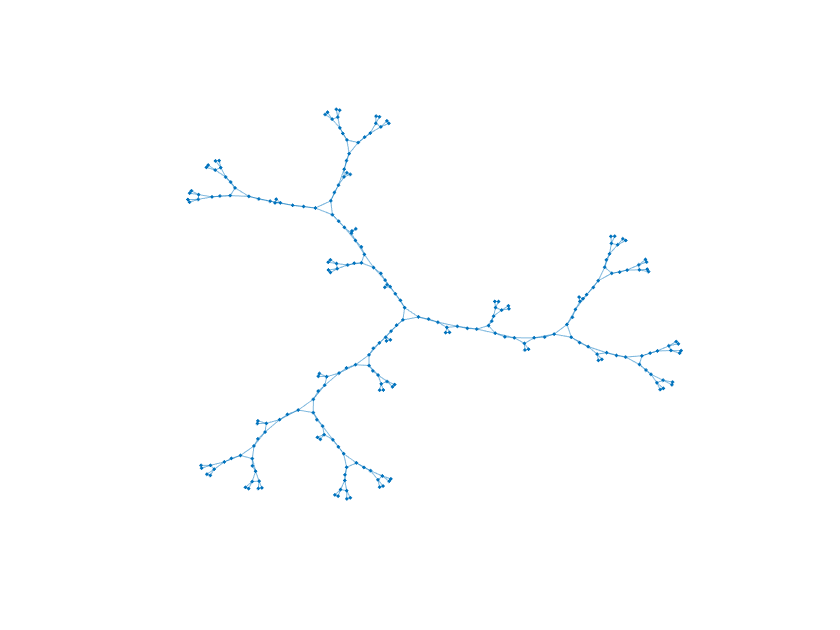} \\
\includegraphics[width=0.45\textwidth]{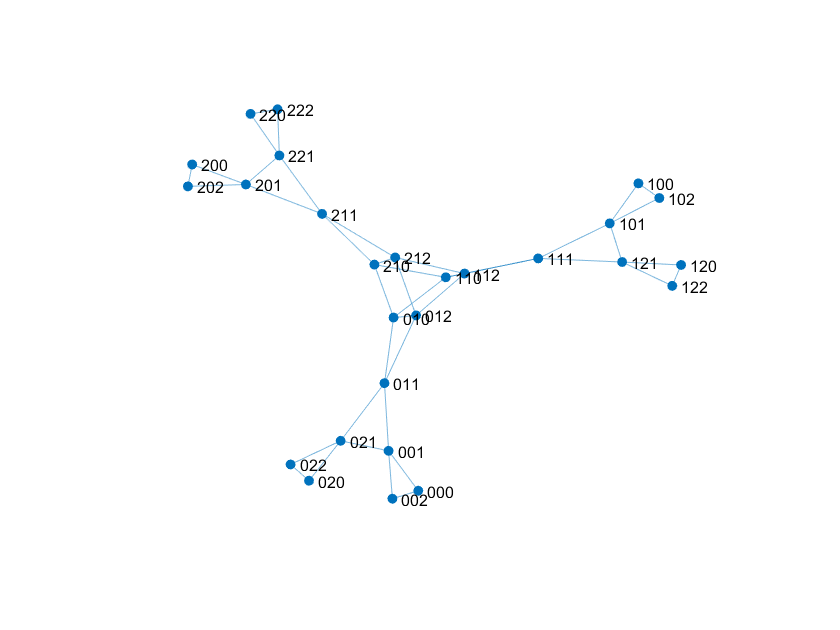} \
\includegraphics[width=0.45\textwidth]{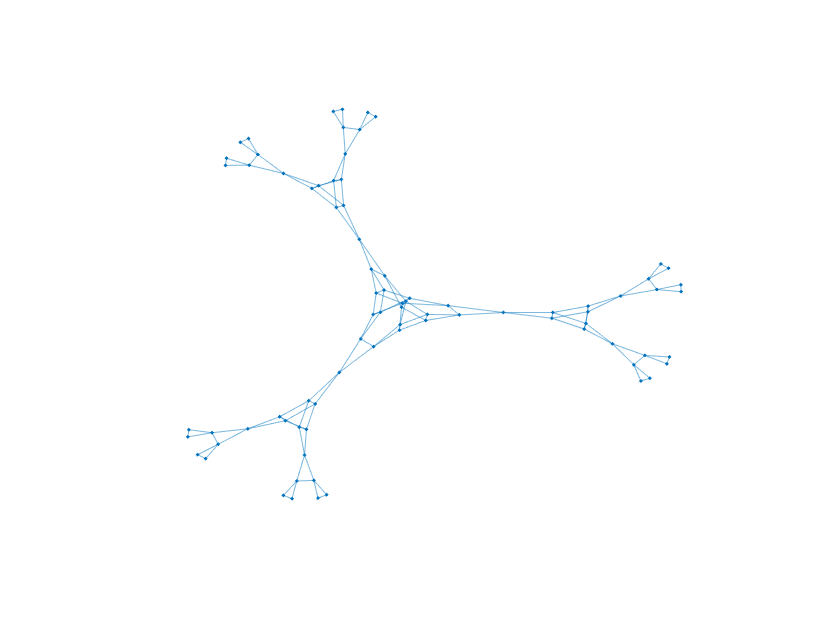} \\
\end{center}
\caption{A complete automaton with 2 self-inverse states and 3 digits. Top right: Without label $(2,2)$ at $c$, a~dendrite $X$ is generated, sketched here on level 5. Bottom: The full space $X$ cannot be embedded into the plane. Graph approximations on levels 3 and 4 are shown.}\label{fig5}
\end{figure}

\section{Topological self-similarity of automata-generated spaces}\label{topsel}
In the definition of a~topology-generating automaton, property 4 required that for each $i\in D$ there is a~loop with label $(i,i)$ at the initial state $o$.

\begin{proposition}\label{prop5}
Property 4 implies that all pairs $(s,s)$ of equal sequences are accepted. Moreover,
\begin{equation}
\mbox{for each }i\in D, \mbox{a pair of addresses }(s,t)
\mbox{ is accepted if and only if }(is,it) \mbox{ is accepted.} \label{tauinv}
\end{equation}
In other words, the equivalence of addresses is invariant to shifting in a~fixed symbol $i$ or word $u$, and to cancelling an initial symbol or word when it is the same in both addresses.

This implies that there are homeomorphisms $h_i:X\to X_i$ such that $h_i\cdot \varphi =\varphi\cdot \tau_i$ where $\tau_i:S\to S$ is defined by $\tau_i(s_1s_2\dots) =is_1s_2\dots\, $ for all $i \in D$. They fulfil the self-similarity equation
\begin{equation}
X =h_0(X)\cup \dots\cup h_{m-1}(X) . \label{sel1}
\end{equation}
\end{proposition}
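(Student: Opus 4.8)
The plan is to establish first the combinatorial heart of the statement, the shift-invariance~\eqref{tauinv}, and then read off the topological assertions about the $h_i$ by routine quotient-topology and compactness arguments. That $(s,s)$ is accepted for every $s$ is immediate from property 4: the loops labelled $(i,i)$ at $o$ allow us to read off the diagonal path $(s_1,s_1),(s_2,s_2),\dots$, which never leaves $o$. For the forward implication of~\eqref{tauinv} I would prepend this loop: given an accepting path for $(s,t)$ starting at $o$, the label $(i,i)$ traverses the loop at $o$ and returns to $o$, after which the original path reads $(is,it)$, so this pair is accepted.

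The reverse implication is the only point requiring care, and it is where the determinism of the automaton enters. Suppose $(is,it)$ is accepted, so some directed path from $o$ begins with an edge labelled $(i,i)$. By property 4 the loop at $o$ carries this label, and by property 2 the state $o$ has no second outgoing edge with the same label; hence the first edge of the accepting path must be that loop and returns to $o$. The remainder of the path then witnesses acceptance of $(s,t)$. Iterating over the symbols of a word $u$ extends both implications from a single digit to $(us,ut)$, which gives the word version stated in the proposition.

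With~\eqref{tauinv} in hand I would define $h_i\colon X\to X$ by $h_i(x_s)=x_{is}$, that is, $h_i$ is $\varphi\circ\tau_i$ read on equivalence classes, so that $h_i\cdot\varphi=\varphi\cdot\tau_i$ holds by construction. Well-definedness and injectivity are exactly the two directions of~\eqref{tauinv}: if $x_s=x_t$ then $(s,t)$ is accepted, hence $(is,it)$ is accepted and $x_{is}=x_{it}$; conversely $x_{is}=x_{it}$ forces $(s,t)$ to be accepted, so $x_s=x_t$. The image is $\{x_{is}\mid s\in S\}=\varphi(S_i)=X_i$. Continuity follows from the universal property of the quotient map $\varphi$: the composite $\varphi\circ\tau_i$ is continuous since $\tau_i$, which prepends a fixed symbol, is continuous on $S$, and it is constant on the fibres of $\varphi$ by well-definedness, so it factors through $\varphi$ as a continuous map $h_i$. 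As $X$ is compact and Hausdorff by Proposition~\ref{prop2} and $X_i$ carries the subspace topology, the continuous bijection $h_i\colon X\to X_i$ from a compact space onto a Hausdorff space is a homeomorphism, and in particular $X_i=h_i(X)$ is closed.

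Finally, the self-similarity equation~\eqref{sel1} is immediate from the decomposition $S=\bigcup_{i\in D}S_i$: applying $\varphi$ yields $X=\bigcup_{i\in D}\varphi(S_i)=\bigcup_{i\in D}X_i=\bigcup_{i\in D}h_i(X)$. I expect the one genuinely delicate step to be the reverse implication of~\eqref{tauinv}, since it is precisely the place where property 2 is indispensable: without determinism an accepted pair $(is,it)$ could be witnessed by a path bypassing the loop at $o$, and then $h_i$ would fail to be injective. Everything downstream of~\eqref{tauinv} reduces to the standard fact that a continuous bijection from a compact space to a Hausdorff space is a homeomorphism.
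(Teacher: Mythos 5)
Your proof is correct and follows essentially the same route as the paper: acceptance of $(s,s)$ via the diagonal loops, the prepend/cancel argument for~\eqref{tauinv} with property 2 forcing the first edge of an accepting path for $(is,it)$ to be the loop at $o$, the definition $h_i=\varphi\tau_i\varphi^{-1}$ on equivalence classes, and~\eqref{sel1} from $S=\bigcup_i\tau_i(S)$. You merely spell out in more detail the topological bookkeeping (continuity via the universal property of the quotient map, homeomorphism via the compact-to-Hausdorff bijection criterion) that the paper leaves implicit.
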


 \begin{proof} The first assertion concerns the infinite paths through loops at $o$. Now fix $i\in D$. Any path of edges starting in $o$ can be augmented by putting the loop $(i,i)$ in front of the path. On the other hand, if the path starts with the label $(i,i)$, then this must be the loop at $o$ by property 2, and we get another path by cancelling this loop. Thus each equivalence class of addresses is mapped by $\tau_i$ onto another equivalence class. Now $h_i$ is just this mapping $\tau_i$ but acting on equivalence classes instead of single addresses. It is the morphism of the quotient space $X$ induced by $\tau_i$, formally written as $h_i(x)=\varphi\tau_i\varphi^{-1}(x)$ for $x\in X$.

The topological self-similarity is obvious for the symbolic space $S$ and can be written as equation $S=\tau_0(S)\cup \dots\cup \tau_{m-1}(S)$ which describes a~disjoint union. For the quotient space $X$ this implies~\eqref{sel1}
where the intersections of the $X_i=h_i(X)$ are usually not empty. They are subject to the rules of the automaton, however. \hfill \end{proof}

Note that the self-similarity of number systems is just a~matter of convenience. We treat all cylinders $S_u$ of $S$ in the same way, identifying associated pairs of addresses and performing the same operations. It would be much more work to have specific rules for each cylinder. Here we have assumed property 4 which implies~\eqref{sel1}. However, it turns out that some weaker type of self-similarity directly follows from the generation of $X$ by an automaton.

\begin{theorem}[Self-similarity of automata-generated spaces]\label{theo1} Suppose that in Definition~\ref{def1} we replace the property 4 by the requirement that $G$ accepts all pairs $(u,u)$ of equal words. Then the generated space $X$ and its pieces satisfy some graph-directed topological self-similarity equations.
\end{theorem}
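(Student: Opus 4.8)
The plan is to understand what replaces property~4 when we weaken it, and to track how the self-similarity equation~\eqref{sel1} must be modified accordingly. The key observation is that property~4 was used in Proposition~\ref{prop5} only to guarantee, for each digit $i$, that the loop $(i,i)$ at $o$ lets us prepend and cancel the symbol $(i,i)$ at the start of accepted paths. When we merely require that all pairs $(u,u)$ are accepted, the path tracing $(u,u)$ may pass through states other than $o$, and different digits $i$ may send $o$ to different states $c_i$ via the label $(i,i)$. So the first step is to record, for each $i\in D$, the state $c_i$ reached from $o$ along the edge labelled $(i,i)$; by property~2 this state is unique, and by the hypothesis that $(\overline{i},\overline{i})$ is accepted there is an outgoing $(i,i)$-edge at $c_i$ as well, so the $c_i$ need not collapse to $o$.

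The second step is to reinterpret the quotient construction relative to an arbitrary state rather than just $o$. For a state $c\in V$, define $L_c(G)$ to be the pairs of sequences accepted by the subautomaton with initial state $c$, and let $X^{(c)}=S/{\sim_c}$ be the corresponding quotient space, with $X^{(o)}=X$ the original space. Each edge $c\xrightarrow{(i,j)}d$ of $G$ then says that prepending $(i,j)$ maps accepted pairs at $d$ to accepted pairs at $c$, which on the quotient level should give a continuous map $h^{c}_{i,j}:X^{(d)}\to X^{(c)}$ analogous to the $h_i$ in Proposition~\ref{prop5}. I would verify, exactly as in the proof of Proposition~\ref{prop5} but keeping track of the target state, that prepending and cancelling are well-defined on equivalence classes: prepending is handled by edge existence, and cancellation uses property~2 to ensure the prepended label forces the path through the designated edge.

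The third step is to assemble these maps into graph-directed equations. For each state $c$, the cylinder decomposition $S=\bigcup_{i\in D}\tau_i(S)$ descends to
\begin{equation}
X^{(c)}=\bigcup_{\substack{i\in D}}\ \bigcup_{\substack{(i,j):\, c\xrightarrow{(i,j)}d}} h^{c}_{i,j}\bigl(X^{(d)}\bigr),\label{gdeq}
\end{equation}
where the inner union ranges over all edges leaving $c$ whose first coordinate is $i$ and $d$ is the corresponding target. This is precisely a system of graph-directed topological self-similarity equations indexed by the states $V$, with the original equation~\eqref{sel1} recovered at $c=o$ when property~4 holds, since then every $(i,i)$-edge is the loop back to $o$ and the off-diagonal edges merely glue pieces together. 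The main obstacle I anticipate is checking that the $h^{c}_{i,j}$ are genuinely homeomorphisms onto their images rather than just continuous surjections: one must confirm that the pieces $X^{(c)}_i$ are in natural bijection with $X^{(d)}$, which amounts to showing that two sequences are $\sim_c$-equivalent after prepending $(i,j)$ if and only if their tails are $\sim_d$-equivalent. This bijectivity is exactly the content of the ``if and only if'' in~\eqref{tauinv}, and reproving it in the graph-directed setting — where the relevant state $d$ varies with the edge — is the delicate point, since one must rule out that additional identifications at $c$ collapse distinct classes at $d$; property~2 together with the determinism of accepted paths is what makes this work.
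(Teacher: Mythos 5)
Your overall strategy is the right one and largely matches the paper's: pass to quotient spaces $X^{(c)}$ with initial state $c$, and use the edges to transport identifications between a state and the piece it governs. Your first two steps are essentially the paper's proof. But your final system of equations contains a genuine error: the inner union over \emph{all} edges leaving $c$ whose first coordinate is $i$. An off-diagonal edge $c\xrightarrow{(i,j)}d$ with $i\neq j$ does not induce a map $X^{(d)}\to X^{(c)}$. Prepending the label $(i,j)$ sends an accepted \emph{pair} $(s,t)$ at $d$ to the accepted pair $(is,jt)$ at $c$; on the quotient level this is the statement $\varphi_c(is)=\varphi_c(jt)$, i.e.\ a gluing between the two distinct pieces $X^{(c)}_i$ and $X^{(c)}_j$, not a parametrization of either piece. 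Concretely, the candidate $[s]_{\sim_d}\mapsto[is]_{\sim_c}$ is well defined only if $s\sim_d t$ implies $is\sim_c it$, and by property~2 that requires the accepting path to start with the \emph{diagonal} edge $(i,i)$ out of $c$, whose endpoint is $d(c,i)$ and in general is not the state $d$ at the end of the $(i,j)$-edge. You in fact notice this yourself when you remark that the off-diagonal edges ``merely glue pieces together,'' but that observation contradicts their appearance as maps in your displayed union.

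The repair is exactly the paper's formulation: index the system by the set $V_0$ of states reachable from $o$ along paths carrying only diagonal labels $(i_1,i_1),\dots,(i_n,i_n)$ (your states $c_i$ are the first generation of these). The hypothesis that all pairs $(u,u)$ are accepted, together with property~2, guarantees that every $c\in V_0$ has exactly one outgoing $(i,i)$-edge for each digit $i$, ending at a state $d(c,i)\in V_0$, so the system closes up on $V_0$; it need not close up on all of $V$, since a state outside $V_0$ may lack a diagonal edge for some digit, and then its piece for that digit carries no internal identifications and does not decompose as a copy of any $X^{(d)}$. The correct equations are $X^{(c)}=\bigcup_{i\in D}h_i^c\bigl(X^{(d(c,i))}\bigr)$ for $c\in V_0$, as in~\eqref{sel2}, with exactly one homeomorphism per digit coming from the unique diagonal edge; the off-diagonal edges contribute no maps and only record which points of different pieces are identified.
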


 \begin{proof} Let $V_0$ be the set of all states $c$ for which there is a~path of edges starting in $o$ with labels $(i_1,i_1), (i_2,i_2),\dots, (i_n,i_n)$. By property 2, this path will be the only way to accept the pair $(u,u)$ for $u=i_1\dots i_n$. Since we assume also $(ui,ui)$ to be accepted for $i\in D$, each such state $c$ must have outgoing edges with label $(i,i)$ for each digit $i\in D$. Since we do not require loops, this means that each $c\in V_0$ can be taken as the initial state of the automaton and will then generate a~topological space $X^c$ and an address map $\varphi_c: S\to X^c $. The edges of $V_0$ will determine how these spaces and maps depend on each other.

\begin{figure}[h!t]
\begin{center}
\includegraphics[width=0.47\textwidth]{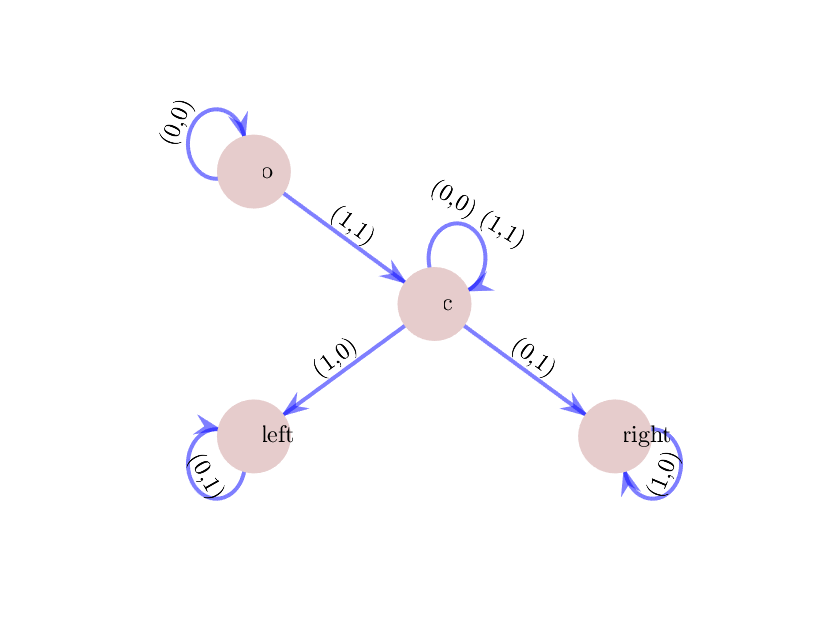} \
\includegraphics[width=0.47\textwidth]{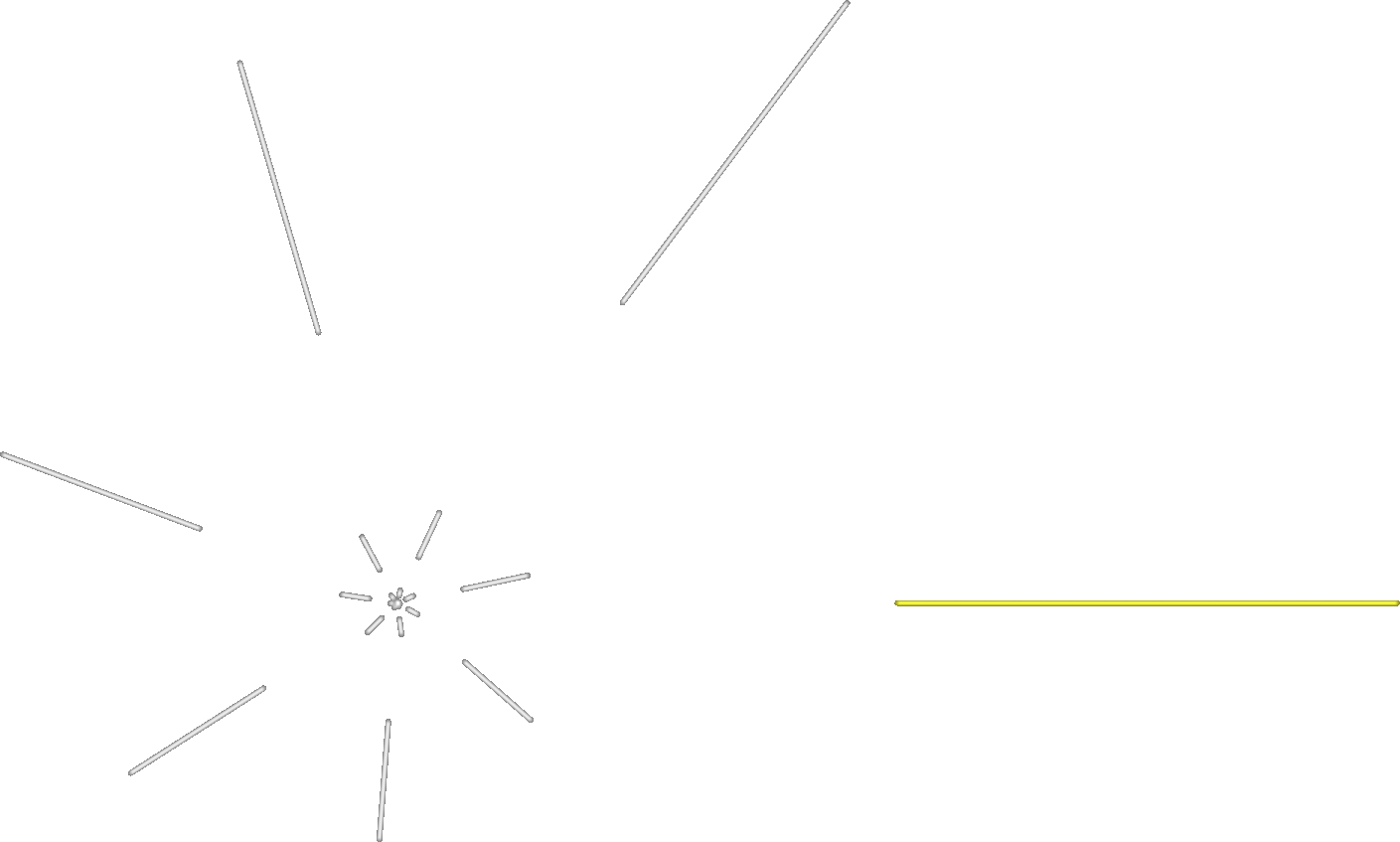}
\end{center}
\caption{Graph-directed topological self-similarity for an automaton without property 4. When we start in state $c$, we obtain an interval. When we start in $o$, the space $X$ consists of a~sequence of intervals $X_1, X_{01}, X_{001},\dots$ and a~limit point with address $\overline{0}$. The spiral arrangement of intervals was chosen for good visibility.}\label{fig6}
\end{figure}

\begin{example} Figure~\ref{fig6} shows a~very simple example with $V_0=\{ o,c \} $. The outgoing edges from $c$ are as in Figure~\ref{fig1} so that with initial state $c$ we get the interval $X^c =[0,1]$ and $\varphi_c:S\to X^c$ identifies the addresses of binary numbers. Proposition~\ref{prop5} holds because of the two loops at $c$. So we have two homeomorphisms $h^c_i:X^c \to X^c_i$. Equation~\eqref{sel1} becomes $X^c =h^c_0(X^c)\cup h^c_1(X^c)$.

At the initial state $o$ we have no outgoing edges labelled $(i,j)$ with $i\not= j$ so that our Definition~\ref{def1} with loop $(1,1)$ at $o$ would yield no identifications and $\varphi :S\to X=X^o$ would be the identity map. Indeed $X_0$ and $X_1$ are disjoint. However, the edge with label $(1,1)$ from $o$ goes to $c$. It says that there are identifications in $X_1$ exactly as in $X^c$. Thus $X_1$ is an interval, and so is $X_{0^k 1}$ for $k=1,2,\dots$, as seen in Figure~\ref{fig6}. Actually, $X$ has almost the same identifications as $X^c $. Only the links $0^k 0\overline{1}\sim 0^k 1\overline{0}$ between successive intervals are missing for $k=0,1,2,\dots\, $.

Since identifications in $X^c$ are the same as in the piece $X_1$, there is a~natural homeo\-morphism $h_1:X^c \to X_1\subseteq X$ which can be written as $h_1(y)=\varphi\tau_i\varphi_c^{-1}(y)$ for $y\in X^c $. Note that $h_1$ corresponds to the edge from $o$ to $c$ and maps into the opposite direction. Moreover, identifications in the piece $X_0$ are the same as in $X$, so as in Proposition~\ref{prop5} we have the homeomorphism $h_0$ from $X$ onto $X_0$. Summarizing we get a~system of set equations
\[
X =h_0(X)\cup h_1(X^c)\quad, \quad X^c =h^c_0(X^c)\cup h^c_1(X^c)
\]
which by definition expresses the topological graph-directed self-similarity~\cite{MW} of the two spaces $X, X^c $. One realization is shown in Figure~\ref{fig6} where $h_0$ includes a~rotation for better visibility.
\end{example}

In the general case there is one quotient space $X^c$ for each possible initial state $c\in V_0$. And we said that for each $c$ and each digit $i\in D$ we have a~unique edge $e(c,i)$ starting in $c$ with label $(i,i)$, with a~uniquely determined endpoint $d(c,i)\in V_0$. Thus starting in $c$ with digit $i$, we will make the same identifications as in $X^{d(c,i)}$. Thus there is a~homeomorphism $h_i^c :X^{d(c,i)}\to X_i^c $ onto the piece $X_i^c$ of $X^c $. This yields the equations
\begin{equation}\label{sel2}
X^c = \bigcup_{i\in D} X_i^c = \bigcup_{i\in D} h_i^c (X^{d(c,i)})\quad\mbox{ for } c\in V_0.
\end{equation}
This system of equations expresses the graph-directed self-similarity as introduced by Mauldin and Williams~\cite{MW} and other authors~\cite{Ba3,Bar,Edgar,Gi87}
for contractive similitudes $h_i^c$ in metric spaces. Contractive maps are needed to prove that there is a~solution consisting of compact sets $X^c, c\in V_0$. In our case, the solution is constructed by the automaton. So we can consider the more general case of homeomorphisms.
 \hfill \end{proof}

In this note, we assumed the stronger property 4 because we think that the graph self-similarity should be better discussed in a~more comprehensive setting where the symbolic space is a~sofic subshift instead of a~full shift. This chapter has shown that the topology of our spaces always comes together with coverings by pieces on different levels. This is part of their structure and a~consequence of their generation by automata.

This raises mathematical questions which we only mention. What is the appropriate isomorphism concept for automata-generated spaces? Homeomorphy is to wide, Lipschitz equivalence~\cite{luoliu,ruan17,ZR16,ZY18} seems too narrow. What about quasiconformal maps? Or shall we better speak about spaces with a~graded block structure?

 \section{More examples}\label{exam}
The previous section shows that self-similar fractals are typical examples of automata-generated spaces. Now we shall see that most examples from fractal geometry have a~simple automatic structure. Self-affine tiles will not be mentioned since their automata are well described in~\cite{ALT,DN05,LL13,LL07,Lo,LZ17,ScT,TZ20}.

\begin{figure}[h!t]
\begin{center}
\includegraphics[width=0.3\textwidth]{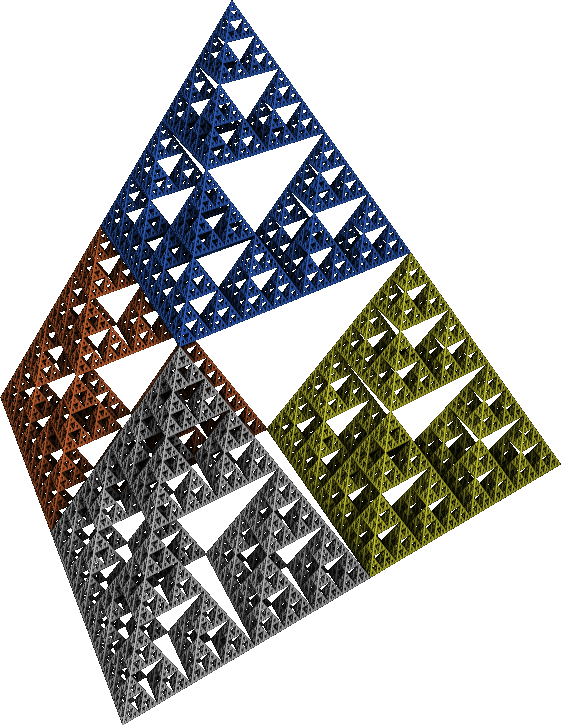} \
\includegraphics[width=0.65\textwidth]{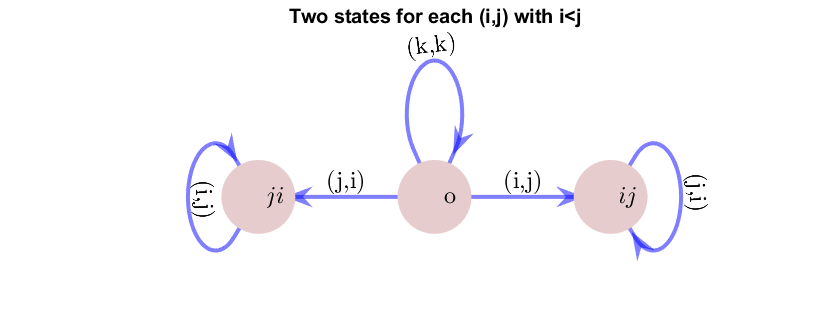}
\end{center}
\caption{Sierpiński tetrahedron and its automaton, $0\le i<j\le 3$.}\label{b1}
\end{figure}

\paragraph*{Sierpiński gasket and tetrahedron. } The digits are $0,1,\dots,n$ for the $n$-dimensional version. For $n=1$ we have Example~\ref{ex1}. There are states $ij$ for all pairs of digits $i\not= j$. Thus in the plane we have 6 states and in $n$ dimensions we have $n(n+1)$. There are only edges from $o$ to all other states and loops, exactly as in Example~\ref{ex1}. See Figure~\ref{b1}.

\begin{figure}[h!t]
\begin{center}
\includegraphics[width=0.45\textwidth]{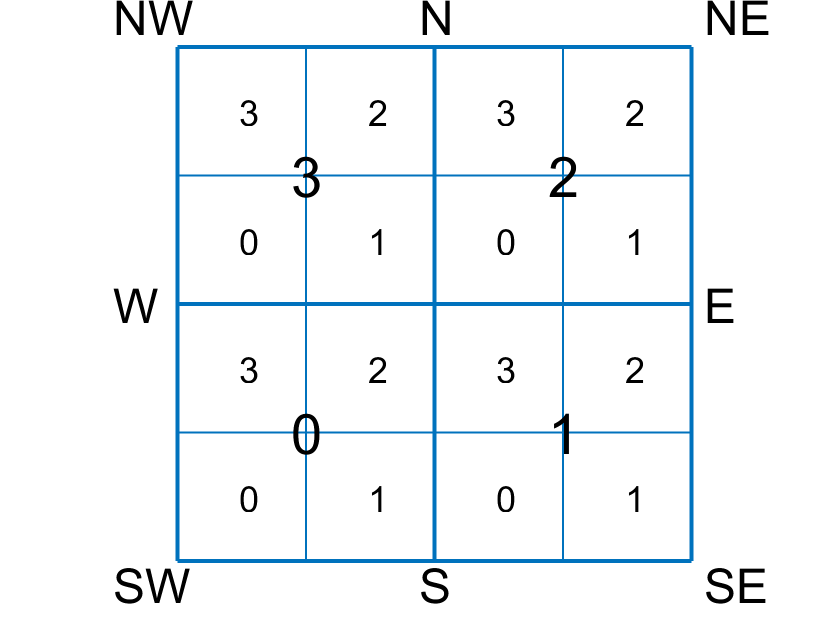} \
\includegraphics[width=0.45\textwidth]{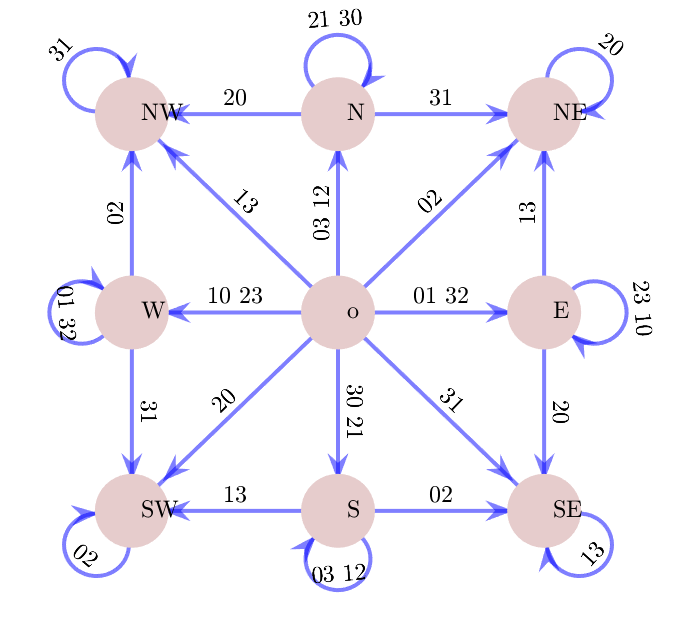}
\end{center}
\caption{The complete automaton for the $2\times 2$ square. With more labels, the graph of this automaton applies to the $k\times k$ square, and also to fractal squares.}\label{b2}
\end{figure}

\paragraph*{The square. } The $2\times 2$ square is the basis of commonly used `quadtree' methods for image coding and processing. We use digits $0,1,2,3$ as indicated in Figure~\ref{b2}. There are four states $N,E,S,W$ for the four sides of the square and four states $NE,SE,SW,NW$ for the vertices. This can be considered as a product of Example~\ref{ex1} with itself. A $k\times k$ square has $k^2$ digits, but the same states and edges. Cubes are also easy to construct.

\begin{figure}[h!t]
\begin{center}
\includegraphics[width=0.37\textwidth]{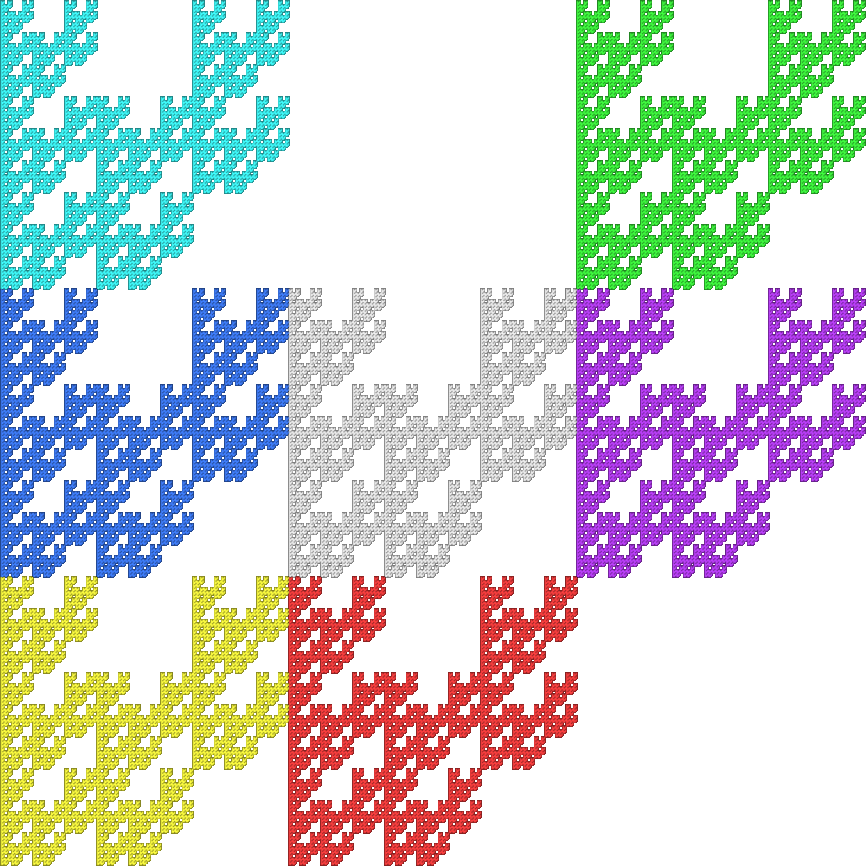} \qquad
\includegraphics[width=0.4\textwidth]{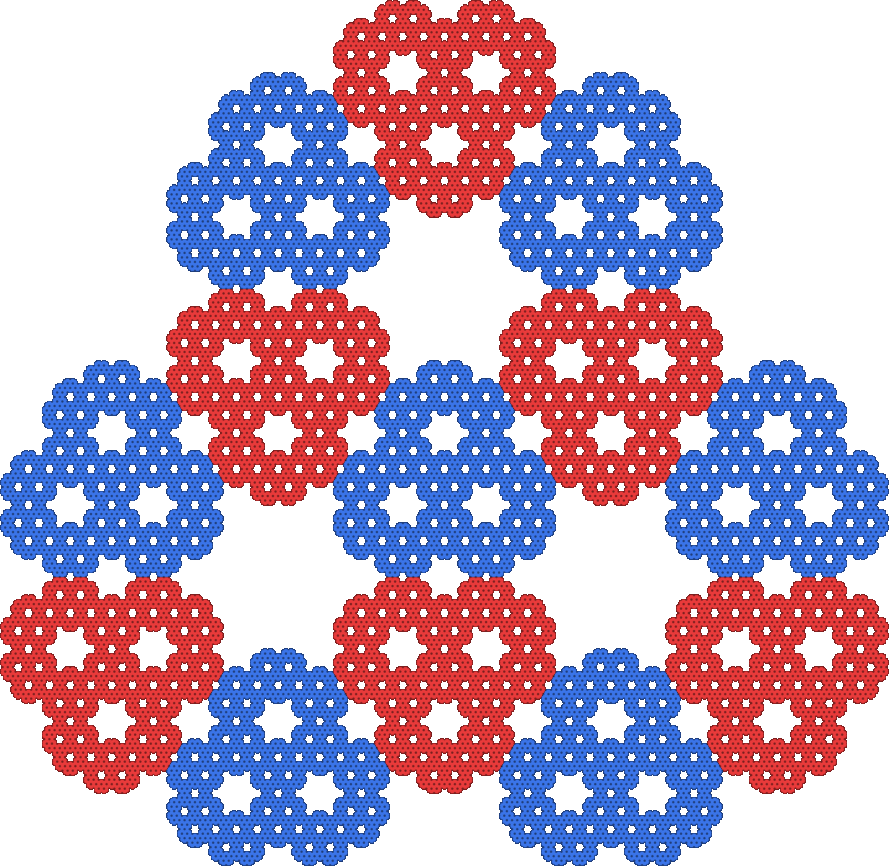}
\end{center}
\caption{A fractal square which does not admit states $NW$ and $SE$, and a~fractal triangle representable with 13 digits and only 3 states.}\label{b3}
\end{figure}

\paragraph*{Fractal squares and triangles. } This class of examples is obtained from the $k\times k$ square by taking less than $k^2$ digits. The classical example is the Sierpiński carpet, with a~hole in the middle of a~$3\times 3$ square. There are a~lot of recent papers on fractal squares, mainly by Chinese authors. See~\cite{DT23, huangrao, luoliu, ruan17, RWX22, xi20, xiao21, ZR16} and their references.
The automaton for fractal squares is the same as for the corresponding square, with fewer edge labels. Some states can disappear, as in Figure~\ref{b3}. The case of only two states was studied by Zhu and Yang~\cite{ZY18} with automata similar to ours.

Figure~\ref{b3} also shows an example of a~`fractal triangle'. Every second triangle is put upside down together with its sides. The automaton for this example needs three states associated with the three sides of a~triangle, and has two loops at each state. Related examples will need six states when triangles meet at their vertices.

\paragraph*{Post-critically finite sets}\cite{BK,Ka93,Kig,T85}. Consider an address map $\varphi: S\to X$ with
\begin{equation}
\varphi (s)=\varphi(t) \Longleftrightarrow \varphi (is)=\varphi(it)\quad\mbox{ for each digit } i .
\label{addsel}
\end{equation}
This is the automata-free formulation of self-similarity. In the setting of Proposition~\ref{prop5}, it implies the equation~\eqref{sel1} for the homeomorphisms $h_j(x)=\varphi^{-1}\tau_j\varphi (x), j=0,\dots,m-1$ from $X$ into $X$. The address map and space $X$ are called post-critically finite if there are finitely many double addresses $(s,t)$ with $s_1\not= t_1$, and all these sequences are preperiodic. This holds for Figures~\ref{fig1}--\ref{fig4} and the Sierpiński tetrahedron, but not for the square, Example~\ref{ex2} and Figure~\ref{b3}. The following result is related to~\cite[Proposition 6]{BM09} and~\cite{RW23}.

\begin{proposition}
 
For a~map $\varphi: S\to X$ with~\eqref{addsel}, the following conditions are equivalent.
\begin{enumerate}
\item[ (i)] $\varphi$ is post-critically finite,
\item[(ii)] There is an automaton $G=(V,D^2,E,o)$ accepting the double addresses of $\varphi $, where there is no directed path between two directed cycles of the graph $(V,E)$.
\end{enumerate}
\label{pcf}
\end{proposition}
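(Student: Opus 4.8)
The plan is to translate both conditions into a single combinatorial statement about infinite paths in the automaton, and then prove their equivalence by analysing how directed cycles are connected. First I would fix an automaton $G$ accepting the double addresses and use the self-similarity~\eqref{addsel} to reduce everything to the off-diagonal part of $G$. By~\eqref{addsel} a pair $(s,t)$ with $s\ne t$ is determined, up to prepending a common prefix, by the first place where the two sequences differ; the diagonal loops $(i,i)$ at $o$ absorb the common prefix, and every genuine double address with $s_1\ne t_1$ corresponds to an infinite directed path that leaves $o$ immediately through a non-diagonal edge and thereafter never returns to $o$ (using the standing assumption that the only edges into $o$ are its loops). This gives a bijection between the double addresses $(s,t)$ with $s_1\ne t_1$ and the infinite paths in the off-diagonal subgraph $G'$ obtained by deleting $o$ and its loops. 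Under this bijection $(s,t)$ is preperiodic precisely when its path is eventually periodic, and there are finitely many such pairs precisely when $G'$ carries finitely many infinite paths. Thus post-critical finiteness becomes the statement that $G'$ has \emph{finitely many infinite paths, all eventually periodic}.

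Next I would isolate the structural lemma doing all the work: in a finite digraph in which every vertex has an outgoing edge, there is no directed path joining two distinct directed cycles if and only if every non-trivial strongly connected component is a sink simple cycle (out-degree one throughout, with no edge leaving it), while the remaining vertices form a DAG feeding into these cycles. For the direction (ii)$\Rightarrow$(i) I would assume no such connecting path: first rule out two distinct cycles inside one component (inside an SCC they are automatically linked), so each non-trivial SCC is a single simple cycle; then show such a cycle has no exit, since following any exit edge the path must, by finiteness and the presence of outgoing edges everywhere, reach some cycle, necessarily a different one, producing a forbidden connection. Consequently an infinite path runs through the transient DAG and then loops forever in one sink cycle, so it is eventually periodic; and since a finite DAG admits only finitely many directed paths to the cycle entrances while the continuation inside each sink cycle is forced, there are only finitely many infinite paths. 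This yields p.c.f.

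For (i)$\Rightarrow$(ii) I would construct the automaton explicitly from the finite set $P$ of preperiodic pairs $(s,t)$ with $s_1\ne t_1$. Taking as states the finitely many tail-pairs $(\sigma^n s,\sigma^n t)$, $n\ge 0$, over all $(s,t)\in P$ together with $o$, drawing from each state $(a,b)$ the single edge labelled $(a_1,b_1)$ to $(\sigma a,\sigma b)$, adding the diagonal loops at $o$, and declaring the inverse of $(a,b)$ to be $(b,a)$, I obtain a valid automaton (property~2 is immediate from out-degree one, property~3 from the inverse just named, properties 1 and 4 by construction) that accepts exactly the double addresses. Each non-initial state then has out-degree one, so $G'$ is functional; in a functional finite graph every vertex runs into a single cycle and cannot branch off it, hence no directed path joins two distinct cycles. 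Here preperiodicity of the pairs in $P$ is exactly what keeps the state set finite, and by Proposition~\ref{prop2b} it cannot degenerate into periodic identifications with infinite equivalence classes, which closes the construction.

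The main obstacle, and the point I would treat most carefully, is the bookkeeping around the initial state: the diagonal loops at $o$ are themselves directed cycles, and since every state is reachable from $o$ there is trivially a directed path from these loops to every other cycle. Hence condition~(ii) can only be intended for the cycles of the off-diagonal part, and I would state this convention explicitly, phrasing the cycle condition inside $G'$ (in the examples this is exactly what distinguishes the p.c.f.\ automata of Figures~\ref{fig1}--\ref{fig4} from the square, whose edge-cycles connect to its corner-cycles). A second delicate point is that ``finitely many paths'' and ``all paths eventually periodic'' are genuinely two clauses of the definition, so I must verify that the sink-simple-cycle structure delivers both at once and, conversely, that a single cycle-to-cycle connection already destroys finiteness by looping the first cycle arbitrarily often before crossing over; this quantitative step is the clean core of the argument.
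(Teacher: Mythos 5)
Your proof is correct and follows essentially the same route as the paper's: for (ii)$\Rightarrow$(i) you both argue that pairwise disjoint, exit-free cycles reachable from $o$ by only finitely many paths force finitely many, eventually periodic double addresses, and for (i)$\Rightarrow$(ii) you both assemble the automaton directly from the finitely many preperiodic pairs so that its only cycles are their periodic tails. Your explicit caveat that the diagonal loops at $o$ must be excluded from the cycle condition is a worthwhile clarification of a point the paper leaves implicit, since its constructed automaton simply omits those loops.
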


 \begin{proof} If $\varphi$ is postcritically finite, each double address $(s,t)$ with $s_1\not= t_1$ can be written as $s=s_1\dots s_n\overline{u}$ and $t=t_1\dots t_n\overline{v}$ with a~common $n$ and words $u=u_1\dots u_k, v=v_1\dots v_k$ of equal length. Consider a~path of directed edges $e_j, j=1\dots n+k$ with labels $(s_j,t_j)$ for $j=1,\dots,n$ and $(u_i,v_i)$ for $j=n+i, i=1,\dots,k$. Let the terminal point of the last edge be the initial point of $e_{n+1}$.
Doing this for all double addresses with $s_1\not= t_1$ and joining the paths at their initial point which we call $o$, we get the automaton.

Now assume the automaton is given and there is no path between two directed cycles. In particular, cycles are disjoint. Then from $o$ to any cycle there are only finitely many paths of edges. Thus each cycle accepts only finitely many pairs of preperiodic addresses. Since each infinite path from $o$ must go through a~cycle, this implies that $\varphi$ is postcritically finite. \hfill \end{proof}

\section{Automata for multiple addresses}\label{comp}
Definition~\ref{def1} does not require that all pairs of addresses $(s,t)$ of one point $x$ in $X$ are accepted by the automaton. Such pairs can also come from the transitivity of the equivalence relation, say when $(s,s')$ and $(t,s')$ are accepted by the automaton for some sequence $s'$. In Figure~\ref{fig4} we added one more state and edge for completeness. However, this was not necessary. The incomplete automaton provides the correct quotient space $X$.

The point here is that the binary relation defined by a~topology-generating automaton on words of length $n$ need not be transitive: $\varphi(S_u)\cap \varphi(S_{u'}) \not=\emptyset$ plus $\varphi(S_v)\cap \varphi(S_{u'}) \not=\emptyset$ does not imply $\varphi(S_u)\cap \varphi(S_v) \not=\emptyset $. However, for $n\to \infty$ the pieces $S_u$ shrink down to a~point, and so do their images. So for sequences, the transitivity of the automata relation is included in the definition: $\varphi(s)=\varphi(s')$ plus $\varphi(t)=\varphi(s')$ implies $\varphi(s)=\varphi(t)$.

\begin{figure}[h!t]
\begin{center}
\includegraphics[width=0.4\textwidth]{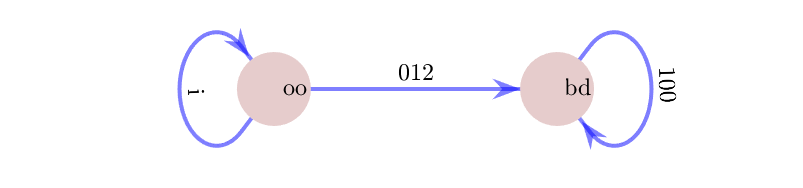} \quad
\includegraphics[width=0.5\textwidth]{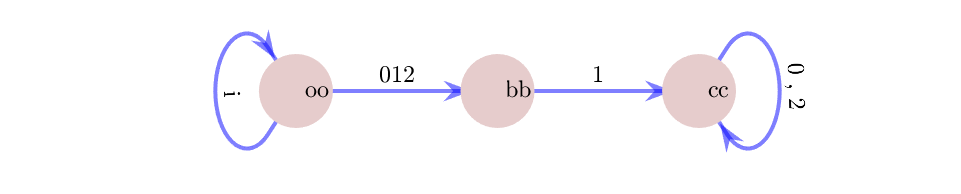}
\end{center}
\caption{In Figures~\ref{fig4} and~\ref{fig5} we have only triple addresses and no proper double addresses. Here are the automata $G_3$ for the triple addresses which can be constructed by the method in Section~\ref{comp}. Label $iii$ is abbreviated as $i$.}\label{figthree}
\end{figure}

Usually, incomplete automata are much simpler than the complete version, as shown by Figure~\ref{figsquare} below compared to Figure~\ref{b2} above. And given an automaton, we do not know whether it is complete or incomplete. We now describe an abstract algorithm that determines automata for triple and multiple addresses from a~given automaton $G$. It decides the completeness of $G$ and also finds cases where we have no proper double addresses, only multiple addresses, as in Figure~\ref{figthree}. In some way, we unfold the automaton $G$, generalizing Gilbert's work on triple addresses of complex number systems with base $-n+i$~\cite{Gi82}.

\begin{theorem}[Automata for triple and multiple addresses]\label{theo2} Let $G=G_2$ be a~topology-generating automaton with finite equivalence classes, with size smaller than a~constant $C$. Let $\varphi: S\to X$ be the corresponding address map. All multiple addresses of points in $X$ can be determined by automata $G_3, G_4,\dots$ derived from $G$, without any geometric knowledge of $X$. The automaton $G_k$ will accept all $k$-tuples of equivalent addresses that are not contained in a~$(k+1)$-tuple of equivalent addresses.
\end{theorem}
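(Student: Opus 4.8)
The plan is to build each $G_k$ as a synchronized (fiber) product of copies of $G_2$ that reads $k$ addresses at once over the alphabet $D^k$. The elementary building block is the triple automaton with states $(a,b)\in V\times V$ and initial state $(o,o)$, where on input $(i,j,l)\in D^3$ one moves from $(a,b)$ to $(a',b')$ exactly when $a\xrightarrow{(i,j)}a'$ and $b\xrightarrow{(i,l)}b'$ are edges of $G_2$. A directed path from $(o,o)$ then certifies simultaneously that $(s,t)$ and $(s,u)$ are accepted; by Proposition~\ref{prop2}(i) the limiting addresses remain equivalent, and since equivalence is transitive the three addresses $s,t,u$ share one point of $X$. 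Every triple of equivalent addresses has such a central entry—on three vertices a connected graph of directly accepted pairs always contains one vertex joined to the other two—so after ordering the triple suitably it is recognised in this way.

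The central difficulty is transitivity. Because $G_2$ may be incomplete, a genuine class need not have all of its pairs accepted directly, as in Figure~\ref{fig4} where $1\overline{0}\sim 2\overline{0}$ holds only through $0\overline{1}$; a product over all $\binom{k}{2}$ pairs would then wrongly reject the class, and for $k\ge 4$ no single central entry need exist (think of a path $1\!-\!2\!-\!3\!-\!4$ of directly accepted pairs). I would therefore complete $G_2$: read off from the triple automaton the relative position of the implied pair, projecting its runs onto the labels $(t_n,u_n)$, and adjoin the resulting states and edges while preserving properties 2--4. Recomputing triples with the enlarged automaton and iterating lengthens the admissible chains by one at each round, so all transitively implied pairs eventually appear. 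Since every class has fewer than $C$ members, the chains have length below $C$ and only finitely many relative positions of pairs inside a common class can arise; the closure therefore halts after finitely many rounds with an automaton $G_2^{*}$ satisfying $L^\infty(G_2^{*})=L_\varphi$. Proving this termination and correctness is the heart of the argument, and it is precisely where the uniform bound $C$ on class sizes is indispensable.

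With the complete automaton $G_2^{*}$ in hand I would set $\widehat{G}_k$ to be the fiber product of $\binom{k}{2}$ copies of $G_2^{*}$, one per pair of the $k$ tracks, synchronized through the shared tracks, with state $(c_{ij})_{i<j}$ and transitions $c_{ij}\xrightarrow{(d_i,d_j)}c_{ij}'$; its reachable states form a finite subset of $(V^{*})^{\binom{k}{2}}$, where $V^{*}$ is the state set of $G_2^{*}$. By construction $\widehat{G}_k$ accepts $(s^1,\dots,s^k)$ exactly when all pairs are equivalent, i.e.\ when the $s^i$ address one point. Distinctness is imposed by requiring that every pair eventually leave $o$, since a pair stays at $o$ forever iff its two addresses coincide; at the level of sequences this just says the $k$ limiting addresses are pairwise different.

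Finally I would isolate the maximal tuples. A point carries more than $k$ addresses exactly when the run of $\widehat{G}_k$ lifts to a run of $\widehat{G}_{k+1}$, that is, when one further track can be appended consistently; so I would obtain $G_k$ by deleting from $\widehat{G}_k$ the image, under the coordinate projection forgetting the last track, of the accepting runs of $\widehat{G}_{k+1}$. Product, projection and difference all preserve regularity and determinism (property~2), so each $G_k$ is a finite deterministic automaton over $D^k$ built purely combinatorially from $G_2$, with no geometric input. The bound $k<C$ leaves only the finitely many automata $G_2,\dots,G_{C-1}$ to construct. The two points I expect to demand real care are the termination of the completion closure and the verification that the projection--difference defining maximality never accidentally discards a genuine size-$k$ class.
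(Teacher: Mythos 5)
Your construction reaches the same conclusion by a genuinely different route. The paper never completes $G_2$: it builds $G_{n+1}$ directly as a product $G_n\times G_2$ in which the new address is coupled, via a shared digit, to \emph{one} of the $n$ existing tracks, and it takes the union of these products over all $n$ choices of the coupling coordinate. The justification is exactly the observation you isolate as the central difficulty: since an equivalence class is by definition the transitive closure of directly accepted pairs, the graph of directly accepted pairs restricted to a class is connected, so the class can be enumerated by repeatedly adjoining one new address that is $G_2$-adjacent to some already-found member (a spanning-tree argument). You instead saturate first --- iterate projection of the triple product to obtain a complete $G_2^{*}$ with $L^\infty(G_2^{*})=L_\varphi$, and then take the $\binom{k}{2}$-fold fiber product. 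Both arguments rest on the same connectedness fact and on the bound $C$ for termination; your version buys a cleaner characterization of $\widehat{G}_k$ (pairwise equivalence on the nose) at the price of the extra closure computation, while the paper's version keeps the state set smaller and matches how the examples (Figures~\ref{fig7} and~\ref{fig8}) are actually computed.

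Two points in your write-up need repair. First, projection onto two of the three tracks does \emph{not} preserve property 2 of Definition~\ref{def1}: a state of the triple product can have two outgoing edges whose $(t_n,u_n)$-labels coincide while the hidden first digits differ, so your $G_2^{*}$ is a priori nondeterministic and needs a subset construction (finiteness survives, so this is a fixable slip, but ``projection preserves determinism'' is false as stated). Second, and more seriously, the final difference step cannot be carried out inside the paper's class of automata. These automata have no final states; they accept precisely the \emph{closed} sets of infinite labelled paths. The set of $k$-tuples extendable to $(k+1)$-tuples is the continuous image of a compact set, hence closed, so the set of strictly maximal $k$-tuples is relatively open and in general not closed: in the $2\times 2$ square, double addresses of edge points accumulate at double addresses of grid vertices, so no automaton of this type accepts exactly the non-extendable pairs. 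Fortunately the theorem only asks that $G_k$ accept \emph{all} maximal $k$-tuples, not \emph{only} those, and your $\widehat{G}_k$ already does this; the paper likewise prunes only whole branches all of whose tuples extend, and explicitly concedes in Example~\ref{ex3} that separating the complete $6$-tuples from the extendable ones is not contained in the general method. So either drop the difference step or restrict it to components of $\widehat{G}_k$ all of whose runs lift to $\widehat{G}_{k+1}$.
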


 \begin{proof} For triple addresses, we have a~product construction: $G_3$ will be a~subset of $G_2~\times~G_2=(V^2,D^3,E_3,oo)$ where the states are denoted $bc$ with $b,c\in V$. There is an edge $(bc,b'c')\in E_3$ with label $ijk$ if $(b,b')$ and $(c,c')$ are edges in $G_2$ with labels $(i,j)$ and $(j,k)$, respectively. Two linked pairs of digits are coupled at their common item, which for paths will then realize the transitivity of the equivalence relation. To obtain $G_3$, we omit all vertices from $G_2\times G_2$ without outgoing edge and those components of the graph on which there are no edge paths with three different addresses. This leads from Figures~\ref{fig4} and~\ref{fig5} to Figure~\ref{figthree}, for instance.

Now we describe the construction of $G_4,G_5$, etc.
The algorithm for extending $G_n$ to $G_{n+1}$ is based on taking products $G_n\times G_2$. That is, we join states $c$ of $G_2$ to $(n-1)$-tuples $b=b_1\dotsb_{n-1}$ which describe states of $G_n$. We draw edges when the last coordinate $j$ of the label between $b$ and $b'$ agrees with the first coordinate of the label
$(j,k)$ between $c$ and $c'$. In this way, we extend the $n$ digits at certain edges of $G_n$ by another digit. If this works for an infinite path of edges, we have extended the number of addresses from $n$ to $n+1$. If no further address can be joined to an equivalence class, it is complete and belongs to $G_n$. If we can add an address, we get a~class which belongs to $G_{n+1}$. Since the size of equivalence classes is assumed to be bounded, the algorithm stops after finite time.

Two problems must be mentioned. On the one hand, the new address which we find may often coincide with a~previous address. The cleaning of the graph
needs lexicographic ordering and recursive procedures which we do not discuss. On the other hand, it is not enough to take only the last entry of the labels of $G_n$ as $j$. For the examples below, a~new address has sometimes to be appended at the first entry.
In general, we cannot assume that transitivity is realized in a~chain. We must check the product with $G_2$ for each fixed coupling coordinate $j$ between 1 and $n$, and then take the union of the $n$ graphs with their initial states identified. This is possible since in any instance we add at most one new address. In the general case, there is no meaning in writing the states of $G_n$ as tuples of states in $G_2$. We can take any other names. There is another simplification in extending $G_n$ at each coordinate $j$. We need only one permutation of each set of addresses in $G_n$, because $G_2$ satisfies property 3 in Definition~\ref{def1}. Thus the graphs $G_k$ with $k\ge 3$ are represented in asymmetric form, simpler than $G_2$, as can be seen in Figure~\ref{figthree}. Only when a~sequence of edges leads back to a~state with a permutation of the addresses, we have to replicate the state.

Any address that is equivalent to one address in an $n$-tuple of addresses in $G_n$ will be found in this way. And if such a~new address does not exist, the $n$-tuple is confirmed as being complete and belonging to $G_n$. Every incomplete address will be extended and eventually be completed since $n$ is bounded by assumption. This concludes the proof.
\hfill \end{proof}

The method must be augmented by ordering and graph cleaning procedures for efficient programming. In the present form, it works well for small examples, as in Figure~\ref{figsquare} below and Example~\ref{ex4} in Section~\ref{IFSex}. We now discuss a~more complicated case.

\begin{example}[An incomplete automaton generating a~triangle]\label{ex3}
Figure~\ref{fig7} shows an automaton with three states and three digits.
The corresponding space $X$ is a~well-known triangle. It is a~self-similar set and the fundamental domain of the Coxeter group $\tilde{G_2}$ generated by the reflections at the three sides of the triangle. The neighbor graph mentioned in Section~\ref{IFSex} is a~complete automaton describing all pairs of equivalent addresses. For this example it has 16 states and 42 arrows which explains why we prefer the incomplete automaton.d
Some multiple addresses can be determined by eyesight from the geometry of triangle pieces. Note that two vertices of the triangle have addresses $\overline{0}$ and $\overline{2}$ and all multiple addresses will end with one of these suffixes. There are points with 4, 6, and even 12 addresses which correspond to the vertices of the corresponding crystallographic tiling.
\end{example}

\begin{figure}[h!t]
\begin{center}
\includegraphics[width=0.47\textwidth]{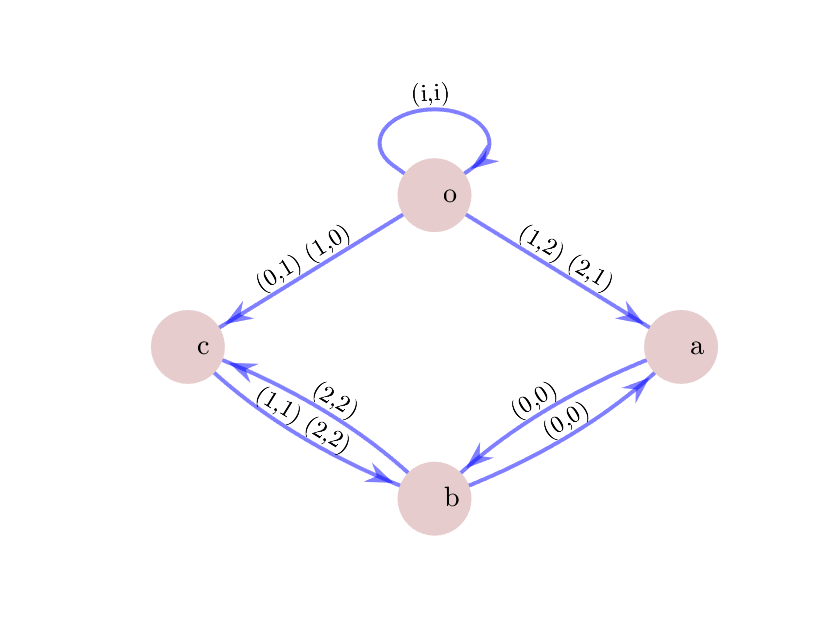} \
\includegraphics[width=0.47\textwidth]{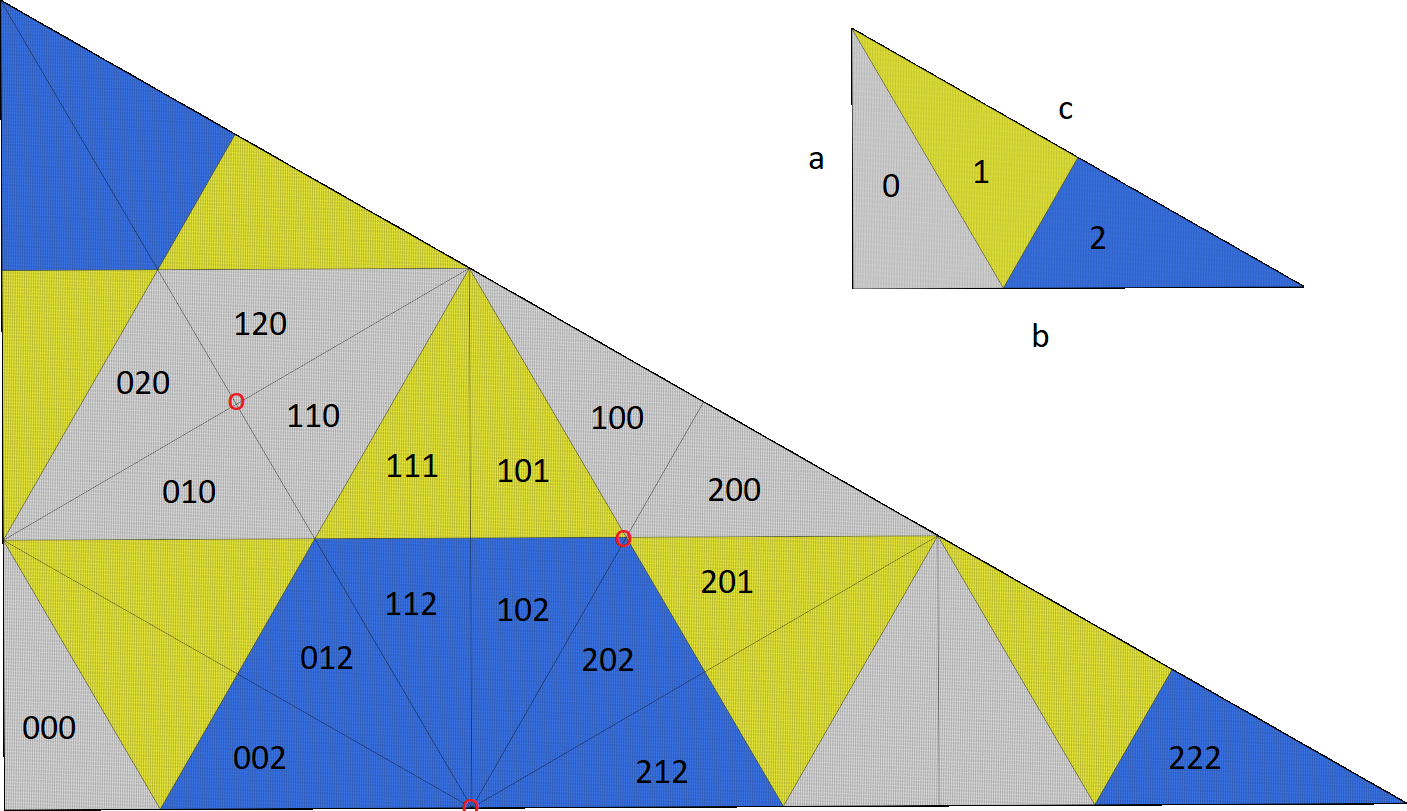}
\end{center}
\caption{An incomplete automaton generating a~self-similar triangle. The states of the automaton correspond to the three sides $a,b,c$. In the small triangle $0c$ is identified with $1c$ and $1a$ with $2a$. The big triangle shows third-level pieces around points with 4, 6, and 12 addresses.}\label{fig7}
\end{figure}

We construct $G_3$ for Example~\ref{ex3} by taking $G_2\times G_2$. The states $ac$ and $ca$ have no outgoing edges. Moreover, the states $oo, aa, bb, cc$ are connected by edges with the first label equal to the third. Since $aa, bb, cc$ do not lead to other states, they can be omitted. Thus from 16 states, we are left with $oo$ and 10 other states of $G_3$. Because of symmetry, we do not show the states $oa,ob,oc$ in Figure~\ref{fig8}. Labels $iii$ are abbreviated $i$.

We have paths of two edges from $oo$ to $bc$ and $ba$ which directly yield triple addresses. Paths through $ao$ yield $10\overline{2}\sim 20\overline{2}\sim 21\overline{2}$ and $20\overline{2}\sim10\overline{2}\sim 11\overline{2}$ for the two labels of the edge $(oo,ao)$. Similarly, the paths from $oo$ over $co$ to $bc$ yield $01\overline{2}\sim 11\overline{2}\sim 10\overline{2}$ and $11\overline{2}\sim 01\overline{2}\sim 00\overline{2}$. For the paths to $ba$ two pairs of labels can be combined: $01\overline{0}\sim 11\overline{0}\sim 12\overline{0}$ and $02\overline{0}\sim 12\overline{0}\sim 11\overline{0}$ and
$11\overline{0}\sim 01\overline{0}\sim 02\overline{0}$ and finally
$12\overline{0}\sim 02\overline{0}\sim 01\overline{0} $. In Figure~\ref{fig7}, these triples correspond to three consecutive gray third-level pieces at the point marked between pieces 0 and 1. The other four triple addresses correspond to consecutive blue triangles around the marked point on the basic line.
We see that the triples are not complete: the paths to $ba$ correspond to a~quadruple and the paths to $bc$ to a~$6$-tuple of addresses. Thus we do not need $G_3$, but $G_4$ and $G_6$.

\begin{figure}[h!t]
\begin{center}
$G_3$\includegraphics[width=0.46\textwidth]{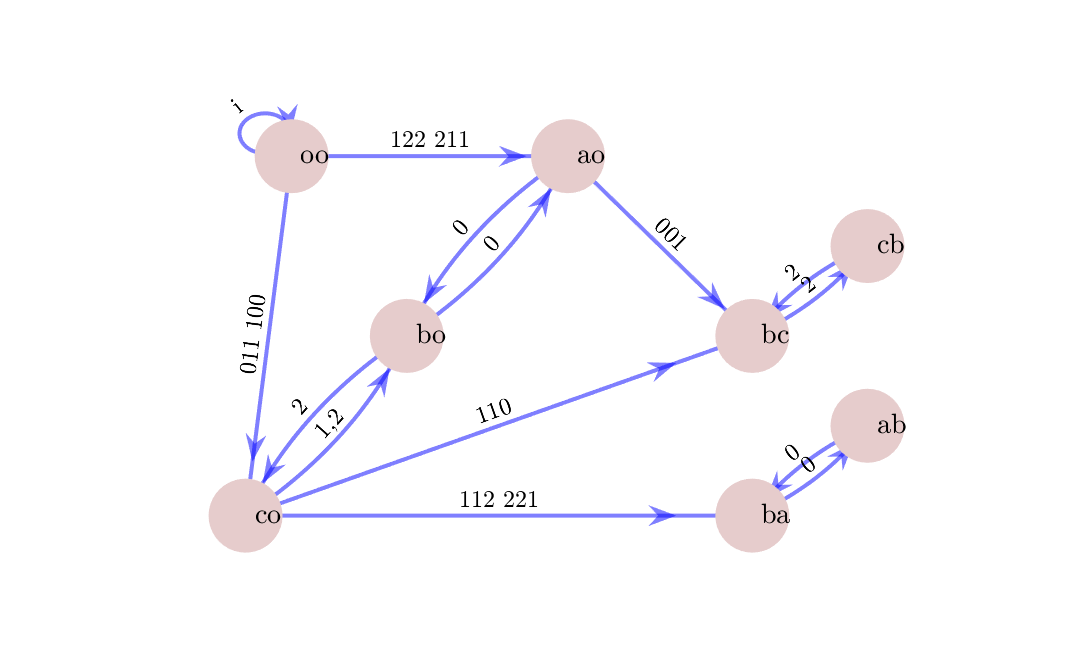}
$G_4$\includegraphics[width=0.46\textwidth]{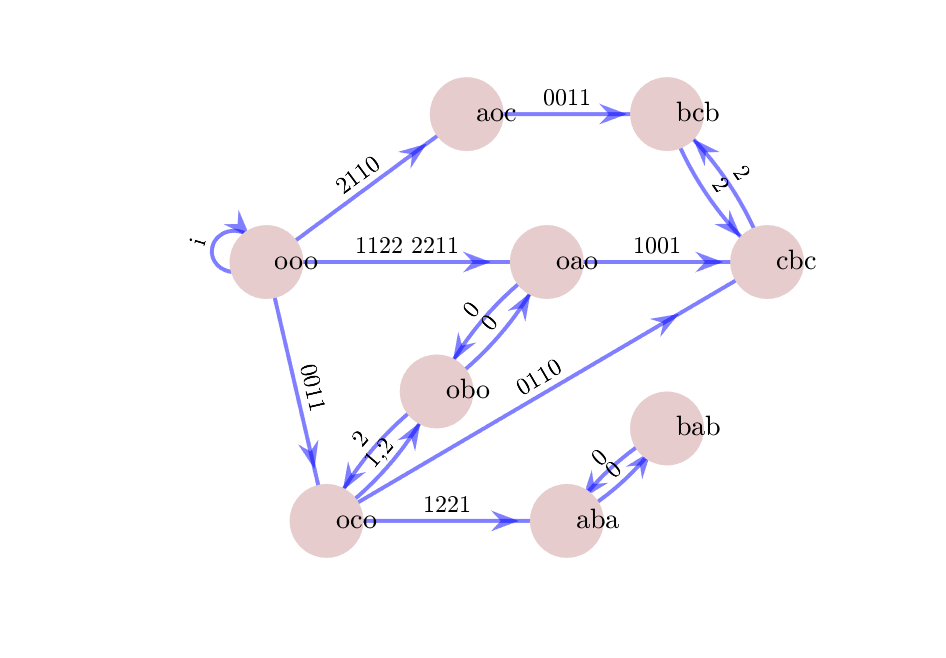}
\end{center}

\caption{Automata $G_3$ and $G_4$ for example~\ref{ex3}. Ignoring loops at $oo$ and paths through $bo$, there are eight triple addresses that correspond to the common endpoints of three consecutive third-level triangles in Figure~\ref{fig7}. These triples can all be extended to quadruples. The quadruple addresses leading to $aba$ in $G_4$ are complete, those leading to $cbc$ can be further extended.}\label{fig8}
\end{figure}

We construct $G_4$ starting with the edge from $oo$ to $a0$ in $G_3$. For 122 a~new item can be appended in front, but not at the end. However, 211 can be extended to both sides, yielding edges from $ooo$ to $aoc$ and $oao$. The first edge determines a~single path while the second one determines the labeling of the rest of the network, as shown on the right of Figure~\ref{fig8}. All other couplings of addresses, for instance appending to 122 or taking the other label of edges to and from $c0$ in $G_3$, will lead to repetitions, up to permutation of the addresses.

All triple addresses have been extended, so $G_3$ is now irrelevant. In $G_4$ in Figure~\ref{fig8} the paths leading to $aba$ describe quadruples that are complete. There are two possible digits for the starting edge at $ooo$ and two possible digits for the final edge at $aba$, so there can be only four addresses. When we try to append another address, it will be a~repetition. Thus the part of $G_4$ which leads to $aba$, comprising six vertices, is the graph of complete quadruple addresses.

The three paths leading to $bcb$ and $cbc$ have still incomplete labels. On the upper path, we can append on both sides. On the path in the middle, we can append two digits on the right of 2211. In both cases we get the six-tuple path
\[
G_6 :  ooooo \xrightarrow{221100} oaoco \xrightarrow{100110} cbcbc \stackrel{\mathrm{2}}{\longleftrightarrow} bcbcb \, .
\]
This path describes the six addresses of the marked point on the basic line, and, together with the loops at the initial state, all points in $X$ with six and more addresses. Thus this is already the graph $G_6$. The points that lie on the sides of the triangle $X$ have exactly six addresses, the other have twelve. The sides of type $b$ on the boundary are $b,0b,1b,2b$. We have the self-similarity equation $b=00b\cup 21b\cup 22b $. So the boundary has an automatic structure, with address language $\{ 00,21,22\}^* \{\lambda, 0,1,2\}$ where $\lambda$ is the empty word. Thus we can replace $ooooo$ in $G_6$ by a~small automaton with five states to describe exactly the complete six-tuple addresses of points on the boundary. This is not contained in our general method which also produces a~larger version of $G_6$.

To determine the twelve-tuple addresses, we really need the states with several $o$ as in Figure~\ref{fig8}. We have to amalgamate two six-tuple addresses along the line $1a=2a$ (marked point in Figure~\ref{fig7}) or along $0c=1c$ (two points on both sides of the marked point). Here is one version of the automaton, with notation $v^n =vv\dots v$. All twelve tuples are complete.

\begin{small}
\begin{eqnarray*}
G_{12}:  \textstyle o^{11} \ \ \xrightarrow{\mathrlap{1^6 2^6}\phantom{1^6 2^6}}& o^5 ao^5 & \textstyle \xleftrightarrow{\mathrlap{0}\phantom{0}}\ o^5 bo^5 \ \xrightarrow{\mathrlap{(221100)^2}\phantom{(221100)^2}} oaocoaocoao \ \xrightarrow{\mathrlap{(1001)^3}\phantom{(1001)^3}} cbcbcbcbcbc \ \xleftrightarrow{\mathrlap{2}\phantom{2}} bcbcbcbcbcb\,. \\ 
0^6 1^6 \searrow & o^5 co^5 & \mbox{\small 2}\swarrow \nearrow\mbox{\small 1,2}
\end{eqnarray*}
\end{small}

This completes the discussion of Example~\ref{ex3}. The simpler case of the square is illustrated in Figure~\ref{figsquare}. Example~\ref{ex4} with Figure~\ref{dog} leads to $G_3, G_4$, and $G_6$.

\begin{figure}[h!t]
\begin{center}
\includegraphics[width=0.3\textwidth]{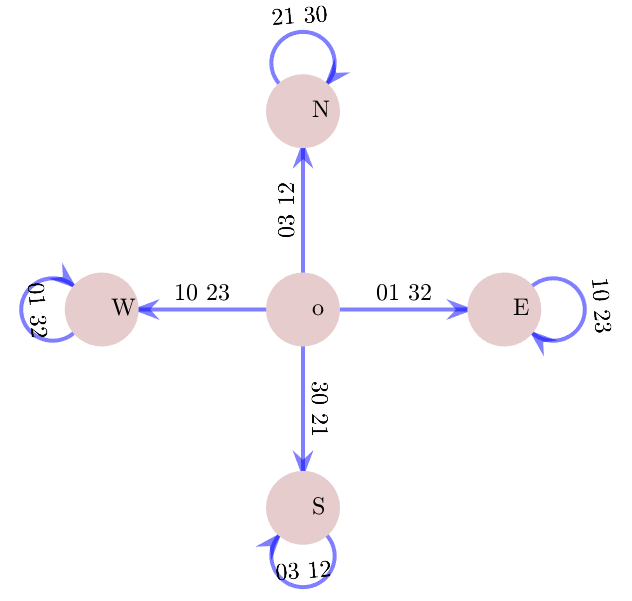} \quad
\includegraphics[width=0.6\textwidth]{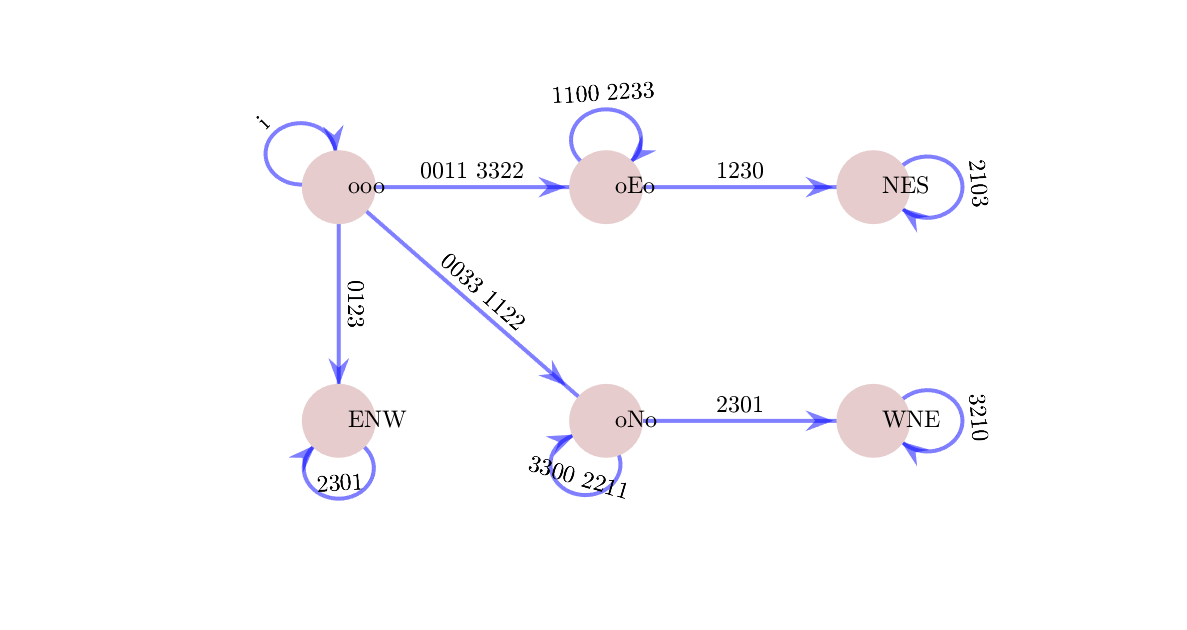} \\
\includegraphics[width=0.7\textwidth]{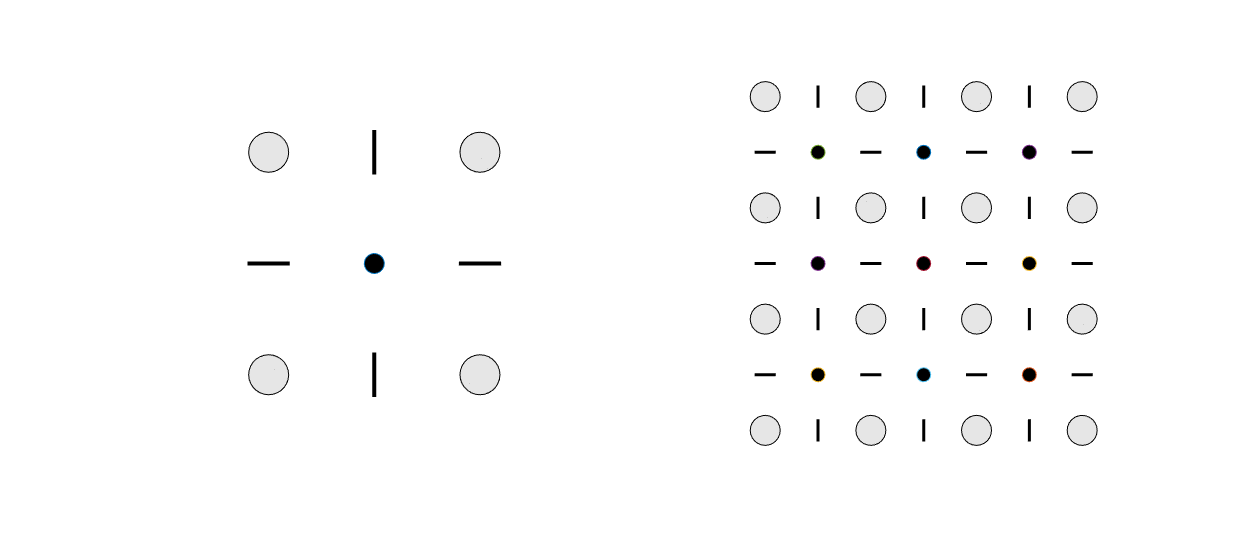}
\end{center}
\caption{Top: a~simple incomplete automaton for the $2\times 2$ square and the corresponding automaton of 4-tuple addresses. Bottom: the finite topological spaces $X_1$ and $X_2$ approximating the square $X$. Gray circles are open points corresponding to $D$ and $D^2 $. Black circles are closed points representing the 4-tuple addresses, and line segments correspond to double addresses. The automata organize this pattern~\cite{kov} in a~hierarchical way. }\label{figsquare}
\end{figure}

\section{Construction of the topological space}\label{inver}
Given an automaton $G$, the definition of the space $X$ as quotient space does not provide much insight. We now present a~meaningful construction of $X$ by approximation from the edge paths in $G$. The approximating spaces look very much like graphs, as in Figure~\ref{fig5}. They are finite topological spaces. A topology on a~finite set is defined by assigning to each point $x$ a~minimal open neighborhood $U_x$~\cite[p. 2]{Bam}.

Neglected for decades, finite topological spaces have gathered a~lot of attention in recent years, due to applications in image processing~\cite{kov} and topological data analysis. Some parts of algebraic topology can be simplified by their use~\cite{Bam, Fern}. In our context, they form the proper tool to understand automata-generated spaces.

\begin{definition}[Approximations of automata-generated spaces]\label{def2}
Let $G$ denote a topology-generating automaton with finite equivalence classes, with size smaller than a~constant $C$, and $X$ the generated topological space. Let $G_k, k\in K$, be the automata of complete $k$-tuple addresses, as constructed in Section~\ref{comp}. For $k\in K$ and each level $n=1,2,\dots$ let $E^n_k$ denote the set of directed paths of $n$ edges in $G_k$ starting in $o$. Each $y\in E_k^n$ corresponds to a~set $W_y\subset D^n$ of $k$ words of length $n$ which form the labels of the path and are accepted at the endpoint of $y $. Let
\[
E^n =\bigcup_{k\in K} E_k^n \quad \mbox{ and }\quad X^n =D^n \cup E^n \
\]
with the following topology. Each point $x\in D^n$ is open. For a~path $y\in E_k^n$ the minimal neighborhood of $y$ must contain $y$, all $x\in W_y$ and all $z\in E_\ell^n$ with $\ell<k$ for which $W_z\subset W_y$. The space $X^n$ is called the $n$-th level approximation of the space $X$.
\end{definition}

We explain the notation and show that this is the natural definition. It is very similar to the geometry of polyhedra with faces, edges, and vertices.
The index set $K$ contains those $k$ for which complete $k$-tuples exist. For the square we had $K=\{ 2,4\}$, for Figures~\ref{fig4} and~\ref{fig5} we had $K=\{ 3\}$, for Example~\ref{ex3} we had $K=\{ 2,4,6,12\}$. On the level $n$, we consider the words $w$ of length $n$, the corresponding disjoint cylinders $S_w$ in the symbolic space $S$, and the pieces $X_w=\varphi (S_w)$ of the topological space $X$ which are not disjoint. The pieces $X_w$ with $w\in D^n$ form a~cover of $X$. We turn this cover into a~partition by taking all possible intersections of sets $X_w$. These sets, the atoms of the partition, are considered as points of $X^n $. They are given by the topology which is inherited from the partition sets.

For the $2\times 2$ square, the $X_w$ are $4^n$ closed squares for $n=1,2,\dots $. Two neighboring squares will intersect in an edge, and four mutually neighboring squares have one common vertex point. The partition, shown in Figure~\ref{figsquare}, will consist of vertex points, edges without vertices, and open squares. The open squares correspond to $D^n$ and are open as points in $X^n $. The minimal neighborhood $U_x$ of the edges contains the two neighboring open squares but no vertices. And the vertices are closed and their $U_x$ contains all four adjoining edges and four adjoining open squares.

For $n=1,2,\dots $. we get a~sequence of finer and finer partitions with smaller and smaller sets which in the limit coincides with the space $X$. The accurate concept is the inverse limit of the $X^n $. There is a~natural projection $\pi_{n+1,n}: X^{n+1}\to X^n$ which assigns to $(w_1,\dots,w_{n+1})\in D^{n+1}$ the word $(w_1,\dots,w_n)$ in $D^n $, and to each path $y\in E_k^{n+1}$ the path of its first $n$ edges which belongs to $E_k^n $. This projection is continuous, that is, for each open set in $X^n$ the corresponding refined set in $X^{n+1}$ is also open.

The inverse limit $X^\infty$ of $X^1 \xleftarrow{\mathrlap{\pi_{2,1}}\phantom{\pi_{2,1}}} X^2 \xleftarrow{\mathrlap{\pi_{3,2}}\phantom{\pi_{3,2}}} X^3 \dots$ is the set of all sequences 
\[ x=(x^1,x^2,\dots) \text{ with } x^n \in X^n \text{ and } \pi_{n+1,n} (x^{n+1})=x^n. \] In our case, the $x^n \in D^n$ extend to sequences $x\in D^\infty =S$, and the edge paths $x^n \in E_k^n$ extend to infinite edge paths $x$. Let $E_k^\infty$ denote the set of infinite edge paths in $G_k$ starting in $o$, and
\[
E^\infty=\bigcup_{k\in K} E_k^\infty \quad \mbox{ and }\quad X^\infty =D^\infty\cup E^\infty .
\]
The projection $\pi_n:X^\infty\to X^n$ assigns to each infinite sequence or path the finite part of the first $n$ elements. The topology on $X^\infty$ is obtained by saying that for $x\in X^\infty, $ a~base of open neighborhoods is given by $U_n(x)=\pi_n^{-1}(U(x^n))$ where $U(x^n)$ is the minimal neighhborhood of $x^n$ in $X^n $. For $x\in D^\infty=S$ these are the basic cylinder sets in the product topology. For $x\in E^\infty$ it is just the $n$-th approximation of $x$ together with all its neighbors in $E^n $.

\begin{theorem}[Quotient space as inverse limit]\label{theo3} 
Let $G$ be a~topology-generating automaton with finite equivalence classes, with size smaller than a~constant $C$. Let $\varphi: S\to X$ denote the address map, and $X^n$ the finite spaces constructed above. Then $X$ is the compact Hausdorff space associated with the inverse limit $X^\infty$.
\end{theorem}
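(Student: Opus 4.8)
The plan is to build a natural continuous surjection $\rho:X^\infty\to X$ and then to prove that $X$ is its \emph{Hausdorff reflection}, i.e.\ the compact Hausdorff space universally associated with the (non-Hausdorff) inverse limit $X^\infty$. I would define $\rho$ directly on the two kinds of elements described before Definition~\ref{def2}. On $D^\infty=S$ set $\rho=\varphi$. An element $y\in E_k^\infty$ is an infinite directed path in $G_k$ from $o$; by Theorem~\ref{theo2} its labels form exactly a complete $k$-tuple $s^{(1)},\dots,s^{(k)}$ of equivalent addresses, so I put $\rho(y)=\varphi(s^{(1)})=\dots=\varphi(s^{(k)})$. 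This is well defined, and Theorem~\ref{theo2} also pins down the fibres: over a point $p$ with exactly $k$ addresses the fibre is $\{s^{(1)},\dots,s^{(k)}\}\cup\{y\}$, since no shorter sub-tuple is accepted by any $G_\ell$ with $\ell<k$ and $p$ has no preimage in $E_\ell^\infty$ for $\ell\ne k$. Points with a unique address have a singleton fibre.

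Next I would check that $\rho$ is surjective and continuous. Surjectivity is clear since every $p\in X$ equals $\varphi(s)=\rho(s)$ for some $s\in S$. For continuity I use that $\varphi$ is uniformly continuous on the compact space $S$, so $\max_{w\in D^n}\operatorname{diam}X_w\to0$. Fix $x\in X^\infty$ with $\rho(x)=p$ and $\varepsilon>0$. Each word $w$ in the label set $W_{x^n}$ of $x^n$ is a length-$n$ prefix of an address of $p$, hence $p\in X_w$; taking $n$ so large that these finitely many pieces all have diameter below $\varepsilon$, I expect $\rho\big(U_n(x)\big)\subseteq B(p,\varepsilon)$. Indeed, for $x'\in U_n(x)=\pi_n^{-1}(U(x^n))$ the projection $x'^{\,n}$ lies in the minimal neighbourhood $U(x^n)$, so it is either $x^n$, a word $w\in W_{x^n}$, or a sub-path $z$ with $W_z\subseteq W_{x^n}$; in every case $\rho(x')$ lies in some $X_w$ with $w\in W_{x^n}$, hence within $\varepsilon$ of $p$.

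The heart of the argument is the reflection step. First, $X^\infty$ is compact: as the inverse limit of the finite spaces $X^n$, every ultrafilter pushes forward to a principal, convergent ultrafilter on each $X^n$, and the chosen limit points $b^n$ (those with $\pi_n^{-1}(b^n)$ in the filter) are compatible under the bonding maps $\pi_{n+1,n}$, so the ultrafilter converges in $X^\infty$. Hence the Hausdorff reflection $H(X^\infty)$ is compact Hausdorff, and since $X$ is Hausdorff by Proposition~\ref{prop2}, $\rho$ factors as $X^\infty\to H(X^\infty)\xrightarrow{\bar\rho}X$ with $\bar\rho$ continuous; the universal property gives $R\subseteq\ker\rho$, where $R$ is the reflection relation. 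For the reverse inclusion I would exploit the specialization order of $X^\infty$: if $s^{(i)}$ is an address of the path $y\in E_k^\infty$, then every basic neighbourhood $U_n(y)=\pi_n^{-1}(U(y^n))$ contains $s^{(i)}$, because the prefix $s^{(i)}_1\dots s^{(i)}_n$ lies in $W_{y^n}\subseteq U(y^n)$. Thus $y\in\overline{\{s^{(i)}\}}$, so $s^{(i)}$ and $y$ have equal image under every continuous map to a Hausdorff space, i.e.\ $s^{(i)}\,R\,y$. Since each fibre of $\rho$ is connected through its single path point, it lies in one $R$-class, giving $\ker\rho\subseteq R$. Therefore $R=\ker\rho$, and $\bar\rho$ is a continuous bijection from the compact space $H(X^\infty)$ onto the Hausdorff space $X$, hence a homeomorphism.

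I expect the main obstacle to be exactly this last step, namely controlling the identifications forced by the non-Hausdorff finite levels. It relies on two inputs that do most of the work: the precise description of the fibres of $\rho$ from Theorem~\ref{theo2}, which guarantees that each fibre is tied together by one path point, and the specialization relations $y\in\overline{\{s^{(i)}\}}$, which exist only because the $X^n$ fail to be $T_1$. By comparison, the uniform shrinking of the pieces $X_w$ and the compactness of the inverse limit of finite spaces are comparatively routine.
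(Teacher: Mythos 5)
Your proposal is correct and follows essentially the same route as the paper: both identify a path point $y\in E_k^\infty$ with its $k$ address sequences via the observation that every basic neighbourhood of $y$ contains them (your $y\in\overline{\{s^{(i)}\}}$), pass to the Hausdorff reflection defined by continuous functions, and then match the resulting quotient with $X$ through the correspondence of basic neighbourhoods. Your write-up merely makes explicit several steps the paper leaves as remarks — the map $\rho$, the ultrafilter argument for compactness of $X^\infty$, and the two inclusions $R=\ker\rho$ — so it is a faithful, more rigorous elaboration rather than a different proof.
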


 \begin{proof} This statement mainly says that our construction of the approximations $X^n$ is mathematically correct. We explain how the elements of $E^\infty$ will disappear in the limit. A finite topological space can never be Hausdorff unless all sets are open. However, for an infinite space one usually requires that the intersection of all neighborhoods of some point $x$ is the point itself. In $X^\infty$ this holds for the points of $D^\infty$ but not for $x\in E^\infty $. Such an $x$ represents an infinite edge path with $k$ address sequences $s_1,\dots,s_k\in S\subset X^\infty $. Each basic neighborhood of $x$ includes some finite part of these sequences in $X^n$ and thus includes
$s_1,\dots,s_k$ in $X^\infty $. In order to obtain a~Hausdorff space, we must identify $x$ with $s_1,\dots,s_k$. One way to define this identification is to say that $x\equiv y$ if $f(x)=f(y)$ for every continuous function on the space. This identification gets rid of $x$ and at the same time identifies the points $s_1,\dots,s_k\in S$. Thus we get exactly the quotient space $X$ as associated compact Hausdorff space of $X^\infty $. To prove the homeomorphism, we state that all basic neighborhoods in $X^\infty$ obviously correspond to open sets in $X$. On the other hand, given a~neighborhood $V$ of $x\in X$ as a~finite union of cylinder sets in $S$, it is easy to determine an $n$ so that the minimal neighborhood $U(x^n)$ leads to a~subset of $V$. \hfill \end{proof}

Formally, the theorem remains true even if we would define the approximations $X^n$ only with the graph $G_2$, without multiple addresses. In that case the identification on $X^\infty$ would work because of the transitivity of the relation $\varphi(x)=\varphi(y)$. On the finite stage, however, the identification could not be anticipated. For the square, for instance, we would have a~hole in the center of $X^1$ and $(4^n -1)/3$ holes in $X^n$ but no holes in the limit.

\section{Topological properties}\label{prope}
The approximations $X^n$ are meaningful. The interval is characterized already by $X^2 $. Any open point is replaced by two open points and a~closed centre point on the next level. Figure~\ref{figsquare} shows a~similar situation for the square. Each open point is replaced by $X^1$ in the next level, and each line segment splits into two line segments with a~middle point. The closed points do not split further.

It seems that topological properties of $X$ can be determined from $X^n$ on certain finite level $n$. When we get into the cyclic part of $G$, the changes of the $X^n$ will repeat those changes which were made for $X^k$ with $k<n$. The structure of the approximations will not further change, it will only be refined. However, the critical $n$ depends on the topological property as well as on the automaton and can be difficult to determine. We briefly comment on some known results, open problems and work in progress.

\paragraph*{Connectedness. }
A classical result of Hata~\cite{Hata} says that $X$ is connected if and only if $X^1$ is connected. In the case of two digits, $X$ is connected if there is at least one double address~\cite{Bar}. A connected automata-generated space is locally connected and path connected, cf.~\cite[Section 3]{BK}. There seem to be no results on $k$-connectedness. A space is $k$-connected if for any two points $x,y$ there are $k$ connecting paths which are disjoint, except for the endpoints.

\paragraph*{Disconnectedness. }
For more than two digits, there is no algorithm to decide whether $X$ is totally disconnected. Even for particular self-similar sets in space, or even in the plane, this is hard to decide~\cite{EFG1}. Luo and Xiong~\cite{Luo21} have recently shown that $X$ is totally disconnected if and only if the cardinality of connected subsets in the $X^n$ is uniformly bounded by a~constant. It remains to determine the critical $n$. If $X$ is neither connected nor totally disconnected, there is the question to describe the connected components of $X$. Can this be done by automata? A special case was studied in~\cite{xiao21}.

\paragraph*{Cutpoints. } For a~connected $X$, a~point $x$ is a~cutpoint if $X\setminus \{ x\}$ is disconnected. If $x$ has a~connected neighborhood $U$, it is a~local cutpoint if $U \setminus \{ x\}$ is disconnected. Isolated loops in the automaton $G$ lead to local cutpoints, see Proposition~\ref{pcf}. However, there are other cutpoints of a~global nature, for instance the point with address $\overline{1}$ in Figure~\ref{fig5}. The critical level for their detection could be $n=3$. For tiles, the question was studied~\cite{LL07,ALT} and for fractal squares in~\cite{RWX22}.

\paragraph*{Topological dimension and embedding dimension. }
There is a~theory of topological dimension which can be applied also to finite spaces. Is it possible to determine the dimension of $X$ on finite level $n$? We can also ask for the smallest $n$ for which $X$ is embeddable in $\RR^n $. For graphs $H$, there is a~classical theorem of Kuratowski which says that $H$ is embeddable in the plane if and only if it has neither the complete graph $K_5$ nor the complete bipartite graph $K_{3,3}$ as homeomorphic subset. This fits well with finite spaces. We apply the simple part of this theorem to Example~\ref{ex2}.

\begin{proposition}
 
Suppose some $X^n$ contains a~discrete version of $K_5$ or $K_{3,3}$, where the edges are closed discrete arcs containing at least one point of $D^n $, and the vertices are projections of connected sets in $X$. Then $X$ is not embeddable in the plane. In particular, Example~\ref{ex2} is not embeddable in the plane.
\label{plan}
\end{proposition}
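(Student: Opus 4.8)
The plan is to reduce the non-embeddability of $X$ to a purely combinatorial fact about one of its finite approximations $X^n$, invoking the stated Kuratowski obstruction. First I would recall the mechanism already set up in the proposition's hypothesis: if some approximation $X^n$ contains a subconfiguration that, after collapsing the closed discrete arcs to topological arcs and the vertex-atoms to points, is homeomorphic to $K_5$ or $K_{3,3}$, then any embedding $X\hookrightarrow\RR^2$ would restrict to a planar embedding of that subconfiguration. The key technical point is that the ``vertices'' are required to be projections of connected subsets of $X$ and the ``edges'' are closed discrete arcs through $D^n$; under the projection $\pi_n:X\to X^n$ these lift to genuine connected pieces of $X$ meeting along the identified address points, so an embedding of $X$ yields an embedding of the corresponding graph in the plane, contradicting Kuratowski. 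This handles the general assertion, and the only remaining work is to exhibit the forbidden subgraph inside a concrete $X^n$ for Example~\ref{ex2}.

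For Example~\ref{ex2} I would work with the automaton of Figure~\ref{fig5}, which has the two self-inverse states $b,c$ beside $o$ with the extra loop $(2,2)$ at $c$, and whose triple-address structure is governed by $G_3$ in Figure~\ref{figthree}. The crucial feature, already noted in the text, is that $X_0\cap X_1\cap X_2$ is a Cantor set realized by the triple addresses $01s\sim 11s\sim 21s$ with $s\in\{0,2\}^\infty$. This means the three pieces $X_0,X_1,X_2$ all meet along an uncountable common set, and each adjacent pair meets in a still larger set. My plan is to pick a finite level $n$ (I expect $n=3$ suffices, since $X_{01},X_{11},X_{21}$ already appear at level~$2$ and their further subdivision at level~$3$ separates enough branch points) and read off from $X^n$ five regions that are pairwise joined by disjoint arcs, i.e.\ a discrete $K_5$, or alternatively a $K_{3,3}$ arising from three ``branch'' atoms each joined to three ``piece'' atoms. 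Concretely I would take the three pieces $X_0,X_1,X_2$ as three of the vertices and two distinct triple-junction points of the Cantor set as the remaining structure, then trace the accepted paths in $G_3$ to verify that the required connecting arcs exist in $X^n$ and are internally disjoint.

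The main obstacle will be the bookkeeping of \emph{disjointness} of the connecting arcs inside the finite space $X^n$: one must check that the closed discrete arcs chosen to realize the edges of $K_5$ or $K_{3,3}$ share no point of $D^n$ and no intermediate junction, which requires tracking, via the labels of $G_3$, exactly which level-$n$ words are identified and which are not. Because two different triple junctions of the Cantor set sit in disjoint subtrees of the piece decomposition, I expect their connecting arcs to live in disjoint collections of cylinders, so the disjointness should follow once the level is taken large enough to separate the chosen junctions; quantifying that separation is where the argument needs care. Once the discrete $K_{3,3}$ (or $K_5$) is exhibited in the relevant $X^n$ with vertices realized as projections of connected subsets of $X$ and edges as closed discrete arcs meeting $D^n$, the general statement of the proposition applies verbatim and the non-planarity of $X$ follows. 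I would present the explicit subgraph as the conclusion rather than grind through every cylinder, since the verification is a finite check on the automaton of Figure~\ref{fig5}.
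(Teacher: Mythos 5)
Your first paragraph reproduces the paper's argument for the general implication essentially verbatim: lift each vertex of $K$ to a point of the connected set $\pi^{-1}(c)$, lift each edge to an arc inside the connected compact set $\pi^{-1}(e)$ (which contains a piece $X_w$ because $e$ meets $D^n$), and invoke Kuratowski. That part is correct and is exactly what the paper does.

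The gap is in the application to Example~\ref{ex2}, which is where all the substantive work of the proposition lies. You never exhibit the forbidden subgraph; you describe a strategy, guess that $n=3$ suffices, and explicitly defer the disjointness verification (``I would present the explicit subgraph as the conclusion rather than grind through every cylinder''). But that finite check \emph{is} the proof. The paper constructs a concrete $K_{3,3}$ in $X^4$: the two colour classes are the six closed points $012Y$, $112Y$, $212Y$ and $Y120$, $Y122$, $110Y$, where $vYw$ denotes the triple-junction atom lying in the closure of $v0w$, $v1w$, $v2w$; three of the nine edges are single open points (e.g.\ $012Y - 0120 - Y120$), the six edges into $110Y$ are explicitly listed chains of alternating open and closed points, and disjointness is checked by observing that each open point has only two adjacent closed points and no interior point of the list repeats. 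Your proposed vertex set --- the three pieces $X_0,X_1,X_2$ together with some triple junctions --- is both different from this and problematic: a piece is represented in $X^n$ by a whole collection of open points rather than a single atom, and routing three internally disjoint discrete arcs from three distinct junctions to a common ``piece vertex'' inside one $X_j$ is precisely the kind of claim that can fail at too coarse a level, since the pairwise intersections $X_i\cap X_j$ are uncountable and arcs lying in different pieces are not automatically disjoint (as you yourself note). Whether $n=3$ leaves enough room for nine pairwise internally disjoint arcs is neither obvious nor addressed; the paper works at level $4$. Until the six vertices and nine arcs are written down and the list is checked, the ``in particular'' clause remains unproven.
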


 \begin{proof} An ordered set $x_1,\dots,x_n$ is a~discrete arc if for $k=1,\dots,n-1$ either $x_k$ is contained in the minimal neighborhood $U_{x_{k+1}}$ or $x_{k+1}$ is in $U_{x_k}$. These are the images of $[0,1]$ under continuous mappings, and path connectedness in finite spaces is the same as connectedness~\cite{Bam}. If a~graph $K$ is contained with discrete arcs in $X^n $, we of course require that different arcs do not meet except at their endpoints. We made strong assumptions concerning the lift of $K$ to $X$ by the projection $\pi:X\to X^n $. For each edge $e$ of $K$, the set $\pi^{-1}(e)$ is a~connected compact subset of $X$ which contains a~piece $X_w$ of $X$ since $e$ contains a~point of $D^n $. For each vertex $c$ of $K$, the set $\pi^{-1}(c)$ is connected. We can choose a~point from each $\pi^{-1}(c)$ and connect them by arcs in each $\pi^{-1}(e)$ to get a~subset of $X$ which topologically realizes the graph $K$ with proper arcs. Thus $X$ cannot be a~subset of the plane.

In Example~\ref{ex2}, the equivalence relation is given only by triple addresses, indicated as triangles in Figure~\ref{fig5}. Three pieces of $X$ corresponding to words $v0w, v1w, v2w$ will intersect, according to Figure~\ref{figthree}. Here $v$ can be any word, and $w$ can be empty, $1$, or $1u$ with $u\in\{ 0,2\}^* $. On level $n$, the pieces are represented by open points in $D^n $, and their intersection is represented as a~closed point in $X^n$ which we denote $vYw$ since it is in the closure of $v0w$, of $v1w$, and of $v2w$. Note that the closed points do not further split in $X^m, m>n$. They directly correspond to points in $X$.

We construct $K_{3,3}$ in $X^4 $. Let $012Y, 112Y, 212Y$ be the red vertices (with empty $w$) and $Y120, Y122$ and $110Y$ the blue vertices. We have to define an arc from each red vertex to each blue vertex. For the first two blue vertices, this arc consists of just one open point between the two endpoints. Namely, $012Y$ connects to $Y120$ via $0120$, and to $Y122$ via $0122$. Similar for $112Y$ and $212Y$. The connections to $110Y$ are a~bit longer:
\[
012Y-0121-01Y1-0101-010Y-0102-Y102-1102-110Y,
\]
\[
112Y-1121-11Y1-1101-110Y \quad \mbox{ and }
\]
\[
212Y-2121-21Y1-2101-210Y-2100-Y100-1100-110Y.
\]
Since each open point has only two adjacent closed points, and beside the vertices no point appears twice in our list, the edges are closed sets in $X^4$ which meet only at their endpoints. \hfill \end{proof}

\paragraph*{Homotopy and homology. }
In algebraic topology, finite spaces have become a~convenient tool~\cite{Bam,Fern}. On the other hand, various attempts were made to determine algebraic invariants like the Euler characteristic for fractal spaces. However, the difference from a~doughnut is that holes in fractals come in infinite families. So we must ask for characteristic exponents rather than absolute invariants. A first step would be to characterize contractible spaces which include trees~\cite{DT23} and disk-like tiles~\cite{ALT,LL07,LZ17,TZ20}.

\section{Realizing automata as neighbor graphs of IFS}\label{IFSex}
We now consider metric realizations of our spaces $X$ which already appeared in the Figures~\ref{fig3},~\ref{fig4},~\ref{b1},~\ref{b3}, and~\ref{fig7}. A finite set of one-to-one contractive maps $f_0,\dots,f_{m-1}$ on $\RR^d$ is called an iterated function system (IFS)~\cite{Bar}. There is a~unique compact set $X\subset \RR^d$ which fulfils $X=f_0(X)\cup\dots\cup f_{m-1}(X)$. This is the self-similarity equation~\eqref{sel1} with prescribed homeomorphisms.
Usually, the $f_k$ are assumed to be linear maps or even similitudes. As in~\cite{EFG1,EFG2, EFG3} we assume that the $f_k$ are similitudes with the same similarity ratio $r<1$. Compositions of the $f_k$ are expressed by words $u\in D^n$ as $f_u=f_{u_1}\cdot\dots\cdot f_{u_n}$.

In this case~\cite{EFG1,BM09,Mth}, as well as in the case of self-affine tilings~\cite{ALT,DN05,LL13,LL07,Lo,LZ17,ScT,TZ20}, a~topology-generating automaton can be determined directly from the IFS. It was termed neighbor map in~\cite{BM09} since the states are neighbor maps of the form $h=f_u^{-1}f_v$. They can be determined recursively, starting with $h=f_i^{-1}f_j$ for $i,j\in D$ and repeatedly applying the formula
\begin{equation}
h'=f_i^{-1}hf_j
\label{recur}
\end{equation}
to the previously determined maps $h$. We consider these maps as vertices of a~graph and put an edge with the label $(i,j)$ from $h$ to $h'$. The initial state is the identity map $id$, and since $f_j^{-1}f_j=id$, we have loops from $id$ to itself for every $j\in D$.

As described in Frougny and Sakarovich~\cite{FS09}, there is a~trim part of the graph, since by contractivity, maps $h$ with $|h(0)|>C$ for some constant $C$ will fulfil $|h'(0)|>|h(0)|$ in~\eqref{recur} for all $i,j\in D$. Such maps $h'$ are neglected, as well as all vertices $h$ which lead only to such maps. If the maps are defined in a~discrete setting, for instance a~canonical number system~\cite{FS09} or an algebraic number field generated by a~Pisot number~\cite{EFG3}, the trim part of the automaton will be finite, and the resulting automaton will fulfil the conditions of definition~\ref{def1}. In this case we shall say that the given IFS $f_0,\dots,f_{m-1}$ is a~\textit{linear or similitude representation of the automaton} $G$.

\textbf{Questions.} \textit{Given a~topology-generating automaton, does it have linear representa\-tions? How can we find them? Are they unique?}

We provide no clear answers, but some methodology and examples.
Let us start with the basic Example~\ref{ex1}. The state \textit{right} in Figure~\ref{fig1} is now called $h=f_0^{-1}f_1$, since the edge with label $(0,1)$ from $o=id$ leads to this state. We are looking for the unknown mappings $f_0,f_1$. The loop with label $(1,0)$ at this state now turns into an equation $h'=h$ in~\eqref{recur}:
\begin{equation}
f_1^{-1}hf_0 =h\, \mbox{ or }\ hf_0=f_1h.\label{e1}
\end{equation}
Such commutativity relations are typical conditions in our representation problem. We now change the notation. Let $g$ be an expanding linear map with factor $R>1$, and let $f_k=g^{-1}h_k$ for $k\in D$. Then $f_k^{-1}=h_k^{-1}g$, and $h_k$ is an isometry. Taking $g=f_0^{-1}$ and shifting the origin of our coordinate system we can assume that $g$ is a~linear map and $h_0=id$ (cf.~\cite[Section 1]{EFG3}). Moreover, $f_i^{-1}f_j=h_i^{-1}gg^{-1}h_j=h_i^{-1}h_j$.

In our case, $h=h_1$ and~\eqref{e1} turns into $h_1g^{-1}=g^{-1}h_1h_1$. In other words,
\begin{equation}
gh_1g^{-1}=h_1^2 .\label{e2}
\end{equation}
This is a~nice generating relation. Now let us assume $g$ is an orientation-preserving similitude in the complex plane: $g(z)=\lambda z$ with $\lambda\in \CC,\ |\lambda|=R$. For the isometry $h_1$ we can write $h_1(z)=az+b$ with $a,b \in\CC, |a|=1$. Here $b\not= 0$ since otherwise $X={0}$. Equation~\eqref{e2} now turns into
\[
\lambda (a\frac{z}{\lambda}+b)=a(az+b)+b \quad\mbox{ or } az+\lambda b=a^2 z +(a+1)b.
\]
This equation has the unique solution $a=1, \lambda=2$. Thus we arrive at the binary number system $g(z)=2z, h_1(z)=z+1$. The number $b$ is still a~degree of freedom in the choice of the unit point of the coordinate system.

As a~second example, we try the method for Figure 2a with the same assumptions on $g,h_0,h_1$. We have $b=h_1^{-1}$ and $c=h_1$. The edge from $c$ to $b$ reads $f_0^{-1}cf_1=b$, that is, $gh_1g^{-1}=h_1^{-2}$ instead of~\eqref{e2}. Inserting $\lambda, a, b$ we get $a^3 =1, \lambda=-(a^2 +a)$. Since $\lambda\not= 1$, we again get a~unique solution: $a=1, \lambda=-2$.

The case of Figure 2b is a~bit different. At state $b$, we have $h_1=h_1^{-1}$ which implies $h_1(z)=-z+1$ where the number $b$ was standardized. Now $c=f_1^{-1}h_1f_1=h_1gh_1g^{-1}h_1$ and $c(z)=-z+2-\lambda $. The loop at state $c$ reads $gcg^{-1}=c$ which leads to the quadratic equation $\lambda^2 -3\lambda+2=0$ and $\lambda=2, c(z)=-z$. For $X=[0,\frac12 ]$, the two neighbor maps $h_1$ and $c$ are the point reflections at the endpoints of the interval. We proved a~supplement to Proposition~\ref{prop3}.

\begin{proposition}
For the automata in Figures~\ref{fig1} and~\ref{fig2}, the only representations by complex linear maps are the IFS which belong to the number systems with base 2 and -2, and to the tent map. \hfill $\Box$
\label{numb}
\end{proposition}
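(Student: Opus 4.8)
The plan is to exploit the normalization already established in the surrounding text, where every similitude representation of a two-state automaton is reduced to the data $g(z)=\lambda z$ with $\lambda\in\CC$, $|\lambda|=R>1$, together with isometries $h_i(z)=a_iz+b_i$ normalized so that $h_0=\mathrm{id}$. The proposition is really a collection of three independent computations, one for each of the three automata classified in Proposition~\ref{prop3}, so I would organize the proof by simply treating each automaton in turn and showing that the defining edge relation of its non-initial states forces a unique $\lambda$ (and hence a unique IFS up to the choice of origin, i.e. the free parameter $b$).

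First I would record the common setup: by Proposition~\ref{prop5} and the recursion~\eqref{recur}, each non-$o$ state of the automaton is a neighbor map $h=f_u^{-1}f_v$, and each loop or inter-state edge translates into a conjugacy relation of the form $gh g^{-1}=h'$ for the appropriate composition of isometries. The absence of final states (the prefix-closed convention) means that the only constraints are exactly these edge relations, so a representation exists precisely when the resulting system of equations in $\lambda,a,b$ is solvable. Then I would carry out the three cases, each of which has already been essentially done in the running text:

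\begin{enumerate}
\item[(a)] For Figure~\ref{fig1}, the single relation $gh_1g^{-1}=h_1^2$ of~\eqref{e2}, written out with $h_1(z)=az+b$ and $b\neq0$, gives $a=1,\ \lambda=2$, the binary system $g(z)=2z,\ h_1(z)=z+1$.
\item[(b)] For Figure~\ref{fig2}\textcolor{blue}{a}, the inverse-state structure turns the edge into $gh_1g^{-1}=h_1^{-2}$, whence $a^3=1$ and $\lambda=-(a^2+a)$; the requirement $\lambda\neq1$ (otherwise $g$ is not expanding) forces $a=1,\ \lambda=-2$, the base $-2$ system.
\item[(c)] For Figure~\ref{fig2}\textcolor{blue}{b}, the self-inverse condition at $b$ gives $h_1(z)=-z+1$, the composition defining $c$ yields $c(z)=-z+2-\lambda$, and the loop relation $gcg^{-1}=c$ produces the quadratic $\lambda^2-3\lambda+2=0$; discarding $\lambda=1$ leaves $\lambda=2$, the tent-map IFS with $X=[0,\tfrac12]$.
\end{enumerate}

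\noindent Finally I would note that in each case the only remaining freedom is the parameter $b\neq0$, which merely fixes the unit point of the coordinate system and does not change the automaton, so the representation is unique in the stated sense. The main obstacle is not any single computation — each is a short linear-algebra exercise in $\CC$ — but rather the bookkeeping of \emph{which} isometry composition each edge encodes: one must translate the abstract edge labels $(i,j)$ into the correct product $f_i^{-1}(\cdot)f_j$ via~\eqref{recur}, being careful about the self-inverse versus paired-inverse states and about the normalization $h_0=\mathrm{id}$. Once the relations are written correctly, the key point that makes uniqueness work is that each relation is a conjugacy $gh g^{-1}=(\text{word in }h)$ whose comparison of the linear coefficients pins down $a$ as a root of unity and whose constant term then determines $\lambda$ uniquely after excluding the degenerate value $\lambda=1$; I would make sure to justify that exclusion explicitly, since it is the single place where expansivity ($R>1$) is used.
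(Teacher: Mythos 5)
Your proposal is correct and follows essentially the same route as the paper, whose proof of Proposition~\ref{numb} is precisely the three case-by-case computations carried out in the running text of Section~\ref{IFSex}: the relation $gh_1g^{-1}=h_1^2$ for Figure~\ref{fig1}, the relation $gh_1g^{-1}=h_1^{-2}$ for Figure~\ref{fig2}a, and the self-inverse/loop analysis yielding $\lambda^2-3\lambda+2=0$ for Figure~\ref{fig2}b. Your explicit remark that the exclusion of $\lambda=1$ is where expansivity enters is a small but welcome clarification of a step the paper leaves implicit.
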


The representation remained unique although we went from dimension one to two. However, the uniqueness is lost if we allow for orientation-reversing maps $g(z)=\lambda \overline{z}$. Besides the binary interval, we then get Koch curves generated by two reflective maps.

For Example~\ref{ex3} it will now be shown that the whole algebra of the reflection group as well as the action of $g$ is contained in the automaton in Figure~\ref{fig7}.

\begin{proposition}
For the three-state automaton in Figure~\ref{fig7}, there is a~unique orientation-preserving similitude $g$ and neighbor maps $a,b,c$ which represent this automaton. They satisfy
\[
a^2 =b^2 =c^2 =id \, \ ab=ba\, \ (ac)^3 =id\, \ (cb)^6 =id\quad \mbox{ and }
\]
\[
gcg^{-1}=b\, \ gag^{-1}=cbc \, \ gbg^{-1}=cac=aca.
\]
\label{cox}
\end{proposition}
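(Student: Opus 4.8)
The plan is to extract the neighbor maps directly from the edges and loops of the automaton in Figure~\ref{fig7}, translate each loop or self-identification into a relation on isometries, and then solve for the expanding similitude $g$. As in the preceding examples I would normalize the coordinate system so that $g(z)=\lambda z$ is an orientation-preserving similitude with $|\lambda|=R>1$, set $f_i=g^{-1}h_i$ with $h_0=\mathrm{id}$, and recall that every neighbor map has the form $f_u^{-1}f_v$, so that the three states $a,b,c$ of the automaton are isometries of the plane built from the $h_i$. The recursion~\eqref{recur} $h'=f_i^{-1}hf_j$ becomes, along each edge with label $(i,j)$, the relation $gh'g^{-1}=h_ih'h_j^{-1}$ read off against the target state; I would tabulate these for all edges emanating from $o,a,b,c$.

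First I would establish the involution relations $a^2=b^2=c^2=\mathrm{id}$. By property~3 each of $a,b,c$ is self-inverse here (the three states correspond to the three sides of the triangle and are their own mirror partners), which geometrically forces each to be a reflection, hence an involution; algebraically this comes from the symmetric edge labels where $(i,j)$ and $(j,i)$ coincide at a self-inverse state. Next I would read the loops and short cycles of Figure~\ref{fig7}: the identifications $0c\sim 1c$ and $1a\sim 2a$ stated in the caption, together with the cycle structure, give the conjugation relations $gcg^{-1}=b$, $gag^{-1}=cbc$, $gbg^{-1}=cac$. Each such relation is obtained by writing the state reached after prepending a fixed digit $i$ (i.e. conjugating by $g$) and matching it against the state named by the automaton; the equalities $cac=aca$ and the order relations $(ac)^3=\mathrm{id}$, $(cb)^6=\mathrm{id}$ then encode the closed loops of lengths forced by the $6$-tuple and $12$-tuple addresses found in Section~\ref{comp}, which correspond exactly to the $3$-fold and $6$-fold vertices of the crystallographic tiling. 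The relation $ab=ba$ reflects the commuting pair of reflections whose product is the point reflection appearing at the right-angle vertex.

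To pin down uniqueness I would then solve the conjugation system for $\lambda$ and the three reflections explicitly. Writing each reflection as $h(z)=\alpha\overline{z}+\beta$ with $|\alpha|=1$, the relations $gcg^{-1}=b$ etc. become linear equations in $\lambda$ once the reflection axes are parametrized by their directions and offsets; imposing $(ac)^3=\mathrm{id}$ fixes the angle between the axes of $a$ and $c$ to be $\pi/3$, while $(cb)^6=\mathrm{id}$ fixes the angle between $c$ and $b$ to be $\pi/6$, and $ab=ba$ forces the axes of $a$ and $b$ to be perpendicular. These three angle constraints determine the $(\pi/2,\pi/3,\pi/6)$ triangle up to similarity, i.e.\ the fundamental domain of $\tilde{G_2}$, and the remaining conjugation relation $gag^{-1}=cbc$ then yields a single equation whose solution for $\lambda$ is unique (up to the choice of origin and scale that was already standardized). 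I expect the main obstacle to be bookkeeping rather than depth: the delicate point is to verify that the loops of length three and six in the automaton translate precisely to the group orders $(ac)^3=(cb)^6=\mathrm{id}$ and not to spurious larger orders, and to check that the overdetermined conjugation system is consistent and admits exactly one $\lambda$. Showing consistency, and hence that the abstract automaton relations are simultaneously realizable by genuine plane isometries, is the step that requires care.
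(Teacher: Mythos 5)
Your overall framework (translate each edge of the automaton via the recursion~\eqref{recur} into a conjugation relation among isometries, then solve for $g$) is the same as the paper's, but at the two decisive points your plan substitutes geometric expectation for the algebraic mechanism the proof actually needs. First, the relations $(ac)^3=id$ and $(cb)^6=id$: you propose to read them off from the $6$- and $12$-tuple addresses and the known crystallographic tiling, and later to \emph{impose} them in order to fix the angles between the reflection axes. That inverts the logic of the proposition, which asserts these relations as consequences of the automaton alone. The missing idea is that a state reached by two different edges yields two expressions for the same neighbor map, and equating them produces the Coxeter relations: the two edges from $c$ to $b$ give $b=gcg^{-1}$ and $b=aba$ (hence $b^2=id$ and $ab=ba$); the edge from $b$ to $a$ gives $cac=gbg^{-1}$ while the edge from $b$ to $c$ gives $aca=gbg^{-1}$, so $cac=aca$ and $(ac)^3=cac\cdot aca=gb^2g^{-1}=id$; finally $cbcb=(cbc)b=gag^{-1}\cdot gcg^{-1}=g(ac)g^{-1}$, whose cube gives $(cb)^6=id$. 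Nothing about vertices of a tiling enters. The step you flag as ``the delicate point'' is therefore not a verification to be postponed; it is the whole content of the first display in the proposition, and your proposal contains no mechanism for it.

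Second, your normalization $h_0=id$ is the wrong one: the paper notes explicitly that one must set $h_1=id$, since $h_0=id$ does not lead to an orientation-preserving $g$; with $h_1=id$ the edges from $o$ give $c=h_0=h_0^{-1}$ and $a=h_2=h_2^{-1}$, which is where $a^2=c^2=id$ concretely comes from (your appeal to self-inverse states points in the right direction, but the explicit identification of $a,c$ with $h_2,h_0$ is needed downstream). Note also that $b^2=id$ is \emph{derived} from $b=gcg^{-1}$ and $c^2=id$, not assumed. Your displayed recursion $gh'g^{-1}=h_ih'h_j^{-1}$ has the primed and unprimed maps interchanged; the correct reading of $h'=f_i^{-1}hf_j$ is $ghg^{-1}=h_ih'h_j^{-1}$. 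Your uniqueness step (parametrize the reflections by axes and offsets and solve linearly for $\lambda$) is a legitimate, more computational alternative to the paper's argument that the second line of relations describes $g$ as an automorphism of the reflection group carrying $X$ to $g(X)=a(X)\cup X\cup c(X)$, but it only becomes available once the relations themselves have been derived from the edges rather than assumed from the picture.
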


\begin{proof} We work in an abstract setting not restricted to $\CC $.
It is crucial to set $h_1=id$ because $h_0=id$ would not lead to an orientation-preserving $g$. The edges from $o=id$ then imply $c=h_0=h_0^{-1}$ and $a=h_2=h_2^{-1}$. This shows $a^2 =c^2 =id$. Now consider the two edges from $c$ to $b$ and the basic recursion~\eqref{recur} where $(i,j)$ are the edge labels.
\[
b=f_1^{-1}cf_1=gcg^{-1}\, \ b=f_2^{-1}cf_2=h_2gcg^{-1}h_2=aba .
\]
The first relation and $c^2 =id$ yields $b^2 =gcg^{-1}gcg^{-1}=id$. The second relation gives $ab=ba$, which is also self-inverse so that we could conclude $(ab)^2 =id$. The edge from $a$ to $b$ is evaluated as
\[
b=f_0^{-1}af_0= cgag^{-1}c \, \ \mbox{ or }\ cbc=gag^{-1}.
\]
The edge from $b$ to $a$ gives
\[
a=f_0^{-1}bf_0=cgbg^{-1}c \, \ \mbox{ or }\ cac=gbg^{-1}\,
\]
and the edge from $b$ to $c$ gives $aca=gbg^{-1}$. Multiplying the last two equations we get $(ac)^3 =gb^2 g^{-1}=id$. To finish the proof, we determine
\[
cbcb=(cbc)b=gag^{-1}gcg^{-1}=g(ac)g^{-1}.
\]
Taking the third power, we obtain $(cb)^6 =id$. It is now clear that $ab,ac$, and $bc$ are rotations and $a,b,c$ must be reflections. The first line in the proposition shows the well-known Coxeter relations for the reflection group generated by $a,b,c$. The second line expresses the action of $g$ as an automorphism on that group, similar to a~substitution. They do determine $g$. If $X$ is the triangle formed by the reflection lines of $a,b,c$, then the reflection lines of $cbc, cac$, and $b$ determine another triangle which is $g(X)=a(X)\cup X \cup c(X)$. \hfill 
\end{proof}

\begin{conjecture} For any self-similar crystallographic or self-affine tile, the combinatorial structure of the neighbor automaton uniquely determines the IFS.
\end{conjecture}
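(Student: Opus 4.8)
The statement to be supported is a conjecture, so the plan is to turn the ad hoc computations of Propositions~\ref{numb} and~\ref{cox} into a systematic reduction to the rigidity theory of crystallographic groups. First I would fix the normalization used there: write $f_0^{-1}=g$ as a linear expanding map, set $h_0=\mathrm{id}$, and put $f_k=g^{-1}h_k$ so that every neighbor map $f_u^{-1}f_v$ of equal-length words is an isometry $h_u^{-1}h_v$ lying in the group $\Gamma$ generated by the $h_k$. Reading the recursion~\eqref{recur} off the automaton, each edge with label $(i,j)$ from a state $h$ to a state $h'$ becomes the equation $h'=h_i^{-1}\,ghg^{-1}\,h_j$. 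Thus every closed path in the neighbor graph produces a relation inside $\Gamma$, while the edges between distinct states encode the conjugation $h\mapsto ghg^{-1}$, that is, an expanding endomorphism $\sigma=\sigma_g$ of $\Gamma$. The completeness of the neighbor automaton — it records \emph{all} neighbor maps, as in Section~\ref{IFSex} — guarantees that these relations are exhaustive.

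Second, I would argue that for a crystallographic tile the relations so obtained constitute a presentation of the associated crystallographic group $\Gamma$ together with the data of $\sigma$, exactly as the Coxeter relations and the three conjugation formulas appeared in Proposition~\ref{cox}. Since the IFS exists by hypothesis, a geometric realization of this abstract package exists, so only \emph{uniqueness} is at issue. Here I would invoke Bieberbach rigidity: any abstract isomorphism between crystallographic groups of the same dimension is induced by an affine conjugacy. Consequently the combinatorial presentation determines $\Gamma$ as a group of isometries up to an affine change of coordinates, which is precisely the residual freedom — choice of origin and unit point — already visible in the one-dimensional examples.

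Third, uniqueness of $g$ itself should follow from $\sigma$ together with integrality. In the crystallographic similitude case $g(z)=\lambda z$, and the cycle relations become polynomial equations in $\lambda$ — as~\eqref{e2} forced $\lambda=2$ and the tent-map loop forced $\lambda^2-3\lambda+2=0$ in Proposition~\ref{numb} — whose discrete solution set, intersected with the expansion condition $|\lambda|>1$ and the requirement that $g$ normalize $\Gamma$, is a single value. In the self-affine case $g$ is an integer expansion acting on the translation lattice $T(\Gamma)\subseteq \Gamma$; there $\sigma$ fixes its matrix up to $\mathrm{GL}(T)$-conjugacy, which is exactly the lattice change-of-basis freedom, so the expansion and hence the tile is determined up to affine equivalence. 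Either way the remaining freedom matches the normalizations $h_0=\mathrm{id}$ and the coordinate choice, and the translation parts of the $h_k$ are then read off from the generating relations.

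The hardest part, where I expect the real work, is proving that the relations harvested from the automaton are \emph{complete enough} to single out the geometric object, i.e. that two tiles with isomorphic neighbor automata cannot be affinely inequivalent. One must show (i) that the generators $h_k$ together with the closed-path relations genuinely present the full symmetry group $\Gamma$ rather than a proper quotient or a group admitting extra automorphisms, and (ii) that the prescribed action of $\sigma$ admits no second expanding realization. The danger is a hidden modulus: a continuous family of ratios $\lambda$, or two inequivalent digit systems, compatible with the same transition structure. Ruling this out presumably requires translating the expansion condition and the finiteness of the neighbor set into enough algebraic constraints, and may ultimately need the hypothesis that $g$ be a similitude, or that the cycle structure force the linear part to be conjugate to a scalar; this is why the conjecture is stated, for now, with the crystallographic and self-affine classes separated.
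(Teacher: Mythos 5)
The statement you are addressing is presented in the paper as a \emph{conjecture}; the paper offers no proof of it, only supporting evidence in the form of case-by-case verifications (Propositions~\ref{numb}, \ref{cox} and~\ref{uni}), each of which extracts polynomial equations for the unknowns $\lambda$ and $a$ from specific loops and paths of a specific automaton and solves them explicitly. Your proposal is therefore not being measured against an argument in the paper and must be judged on its own terms: it is a sensible research programme, and it correctly identifies the normalization ($g=f_0^{-1}$, $h_0=\mathrm{id}$, $f_k=g^{-1}h_k$), the translation of edges into the conjugation relations $h'=h_i^{-1}\,ghg^{-1}\,h_j$ from~\eqref{recur}, and the role of an expanding endomorphism $\sigma_g$ of the group $\Gamma$ generated by the neighbor maps. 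But it is not a proof, and you say so yourself in the final paragraph.

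The gap you flag at the end is in fact the entire content of the conjecture, and there are two concrete places where the plan as written would fail. First, Bieberbach rigidity is invoked prematurely: it says that an abstract isomorphism between crystallographic groups is induced by an affine conjugacy, but to apply it you must already know (a) that the closed-path relations harvested from the automaton present $\Gamma$ itself rather than a proper quotient or a larger group sharing the same finite automaton, and (b) that $\Gamma$ is a discrete cocompact isometry group at all --- neither follows from the combinatorics alone. In Proposition~\ref{cox} precisely this is established by explicit computation (the Coxeter relations $a^2=b^2=c^2=id$, $(ac)^3=id$, $(cb)^6=id$ are derived one by one), and your outline contains no general mechanism replacing that computation. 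Second, uniqueness of the expansion $g$ is argued only for similitudes $g(z)=\lambda z$, where cycle relations become polynomials in $\lambda$; for a genuinely self-affine tile the linear part is a matrix, the relations pin it down at best up to conjugation over the normalizer of $\Gamma$, and one must still exclude a positive-dimensional family of expanding matrices inducing the same endomorphism of $\Gamma$ modulo inner automorphisms --- the ``hidden modulus'' you mention but do not rule out. Finally, the paper itself records that uniqueness fails outside the tile setting (trees, disconnected sets, and the connected example of~\cite{EFG2} with non-unique IFS), so any correct proof must use the tiling or double-connectivity hypothesis in an essential way; your argument never does. In short, the reduction to the pair (presentation of $\Gamma$, endomorphism $\sigma$) is a reasonable way to organize the problem and is consistent with the paper's worked examples, but the steps (i) and (ii) you defer are the conjecture itself, not a technicality.
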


The next example shows that we probably can extend this conjecture to fractals which are doubly connected, that is, there are two disjoint paths between any pair of points. In~\cite{EFG2} we studied a~connected self-similar set $X$ with Cantor sets $f_i(X)\cap f_j(X)$ for which the IFS was not unique. For disconnected sets like Figure~\ref{fig3} or trees like Figure~\ref{fig4} it is clear that there are many different representations by similitudes.

\begin{example}[The dog carpet with five-state automaton]\label{ex4}
The self-similar set $X$ in Figure~\ref{dog} is generated by an IFS with five maps and data from the algebraic number field $\QQ(\sqrt{-15})$. We call it dog carpet because the holes look like a~pet. In~\cite[Figure 10]{EFG2} we found a~symmetric version of this fractal with five IFS maps and 12 neighbor maps. This $X$ has an incomplete neighbor automaton with only five states. The curious point is that the maps involve rotations around irrational angles so that the pieces have an infinite number of orientations when we extend the construction over the plane. Examples of this type have not been considered in the literature, and no tiling of this type is expected to exist.
\end{example}

\begin{figure}[h!t]
\begin{center}
\includegraphics[width=0.45\textwidth]{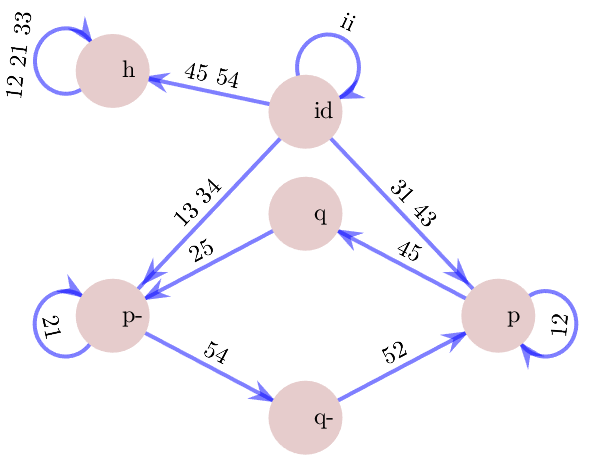} \quad
\includegraphics[width=0.45\textwidth]{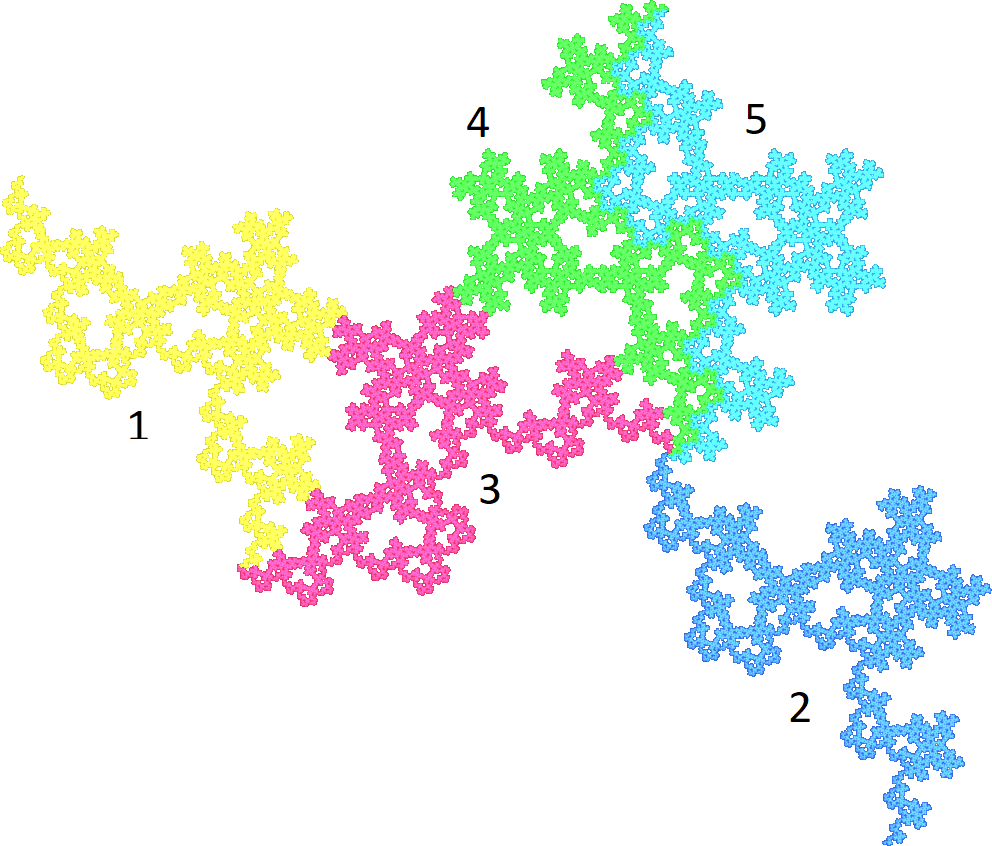} \\
\includegraphics[width=0.8\textwidth]{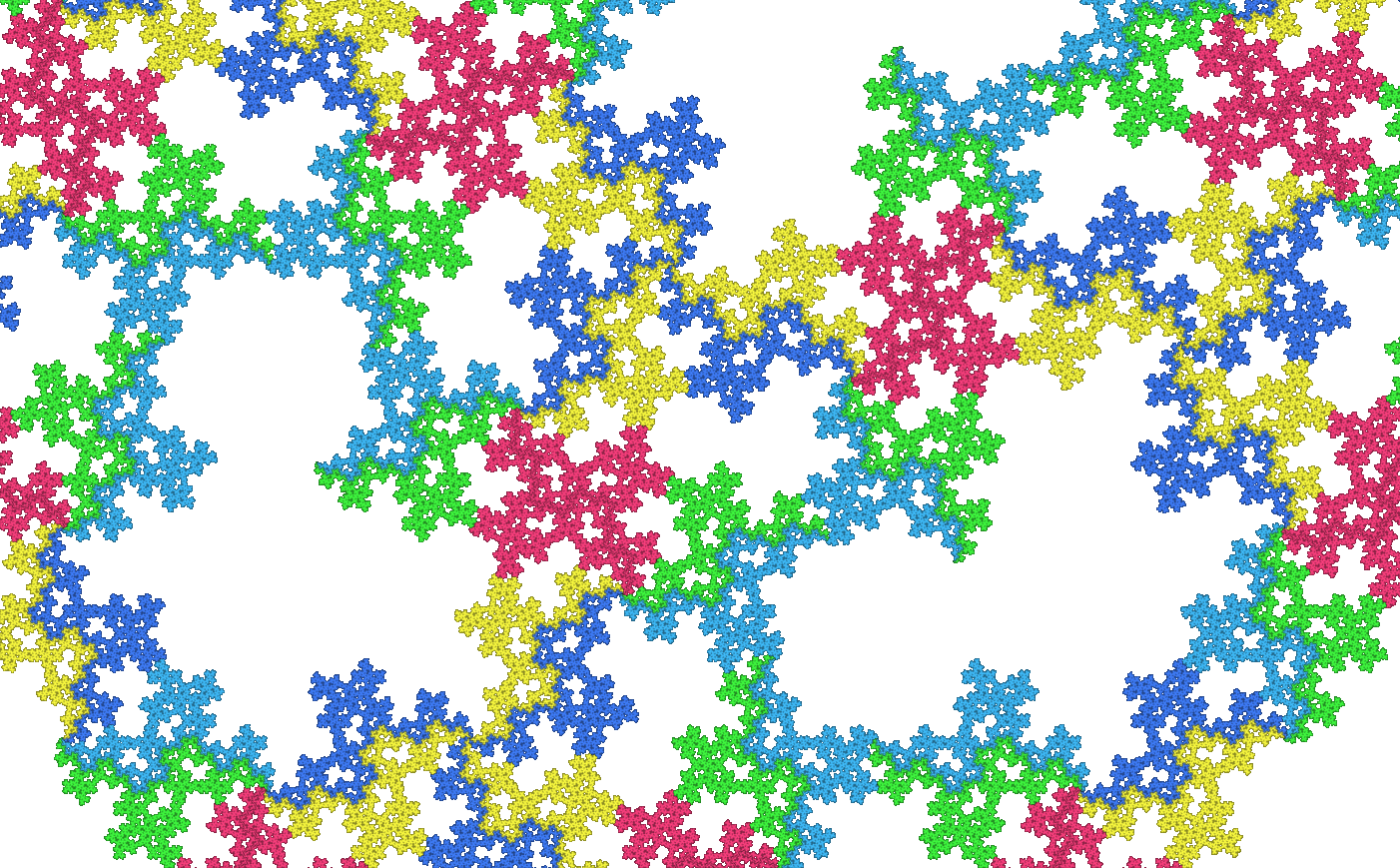}
\end{center}
\caption{Top: a~self-similar set $X$ and its neighbor automaton. Bottom: a~magnification which shows the fractal tiling structure. The IFS maps $f_k$ are determined by the automaton. This seems curious since the neighbor maps involve an irrational angle.}\label{dog}
\end{figure}

\begin{proposition}
 
The automaton in Figure~\ref{dog} has a~unique representation by orientation-preserving similitudes in the plane.
\label{uni}
\end{proposition}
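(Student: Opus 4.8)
The plan is to apply the normalization-and-conjugation scheme of Propositions~\ref{numb} and~\ref{cox} to the five-state graph of Figure~\ref{dog}. First I would label the five states as neighbor maps $h_s=f_u^{-1}f_v$ and record the edges with their labels, noting in particular the loops and directed cycles. Writing $f_k=g^{-1}h_k$ with $g$ an expanding similitude and $h_k$ an isometry, I conjugate coordinates so that $g$ fixes the origin and set one defining isometry equal to the identity, which forces $g$ to be orientation-preserving and leaves only the unit point of the coordinate system free. Each state becomes $h_s(z)=a_sz+b_s$ with $|a_s|=1$, and an edge with label $(i,j)$ from $s$ to $t$, after substituting $f_i^{-1}=h_i^{-1}g$ and $f_j=g^{-1}h_j$ into the recursion~\eqref{recur}, splits into
\[
a_t=a_i^{-1}a_sa_j, \qquad b_t=a_i^{-1}\bigl(\lambda b_s+a_s b_j-b_i\bigr),
\]
where $g(z)=\lambda z$ with $|\lambda|>1$.

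The rotation equations are multiplicative relations in the abelian circle group and involve neither $\lambda$ nor the translations; every directed cycle and every coincidence of two paths at a common state therefore imposes an integer-linear relation among the arguments of $a_0,\dots,a_4$. These relations cannot have full rank, for a full-rank integer-linear system would force the rotations to be roots of unity, contradicting the irrational angle noted in Example~\ref{ex4}; so one free angle must survive. That free angle is pinned only through the translation equations. Collapsing them around a loop -- exactly as the loop at state $c$ gave the quadratic $\lambda^2-3\lambda+2=0$ in the analysis behind Proposition~\ref{numb} -- yields an eigenvalue-type consistency condition: a nonzero solution $(b_s)$ exists only when $\lambda$ satisfies a polynomial whose coefficients already carry the free rotation angle through the $a_k$.

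I would then solve this polynomial for $\lambda\in\CC$, discard the roots with $|\lambda|\le1$ by contractivity, and check that the surviving root lies in $\QQ(\sqrt{-15})$, matching the data of the dog carpet. Because $\arg\lambda$ is an irrational multiple of $\pi$ for this root, back-substitution fixes the free angle, and hence all rotations $a_k$, at an irrational value -- which is the source of the infinitely many orientations noted in Example~\ref{ex4}. With $\lambda$ and the $a_k$ determined, the translation equations form a homogeneous linear system in the $b_s$ whose solution space I expect to be one complex-dimensional, the single degree of freedom being the overall scale set by the still-free unit point. Since that freedom is precisely the similarity we are entitled to factor out, the orientation-preserving representation is unique.

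The hard part is the coupling between rotations and translations. Unlike the Coxeter example, where finite-order relations made consistency automatic, here one must show that the under-determined rotation system together with the $\lambda$-polynomial has a single joint solution with $|\lambda|>1$: that no continuous family of rotation angles survives the translation constraints, that the contractive root of the polynomial is unique, and that the resulting translation system genuinely collapses to a one-dimensional solution set with no hidden extra freedom. Verifying that this joint solution reproduces exactly the $\QQ(\sqrt{-15})$ data, rather than some rational conjugate, is the final check.
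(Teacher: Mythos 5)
Your setup is the same as the paper's: write $f_k=g^{-1}h_k$ with $g(z)=\lambda z$, normalize one isometry to the identity, and read each edge of the automaton as the recursion~\eqref{recur}, which indeed splits into the rotation and translation equations you display. But from that point on the proposal is a strategy, not a proof, and the part you defer as ``the hard part'' is the entire content of the proposition. The paper's argument is a concrete chain of eliminations driven by the specific combinatorics of the five-state graph in Figure~\ref{dog}, none of which you engage with: the doubly-labelled edge from $id$ to $p$ forces $h_4=h_1^{-1}$; the labels $45$ and $54$ on the edge to $h$ force $h$ to be self-inverse; the loop $33$ at $h$ forces $h$ to share the fixed point of $g^{-1}$, giving $h(z)=-z$ and hence $h_5$; the loop $12$ at $h$ gives $h_2=az-1$; the loop $12$ at $p$ gives the linear relation $\lambda=2a+1$; and the path from $p$ to $p-$ through $q$ gives $\lambda^2+\lambda(\overline a-1)+3\overline a=0$, which together with $a=(\lambda-1)/2$ and $|a|=1$ pins down $\lambda=\frac12(3+i\sqrt{15})$. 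Without tracing these particular edges, your statement that ``the translation equations form a homogeneous linear system whose solution space I expect to be one complex-dimensional'' is an expectation, and uniqueness is exactly what must be proved.

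Two of your general-position arguments would also not survive contact with this example. First, contractivity does not isolate the root: after removing the factor $\lambda+1$, the remaining quadratic $\lambda^2-3\lambda+6=0$ has two conjugate roots both of modulus $\sqrt6>1$, so ``discard $|\lambda|\le1$'' leaves two candidates (they differ by complex conjugation of the whole configuration, which is where the resolution lies, but that has to be said). Second, your claim that the rotation system must be rank-deficient ``because a full-rank system would force roots of unity, contradicting the irrational angle'' assumes the known realization of Example~\ref{ex4} to constrain the equations; for a uniqueness proof you must derive the structure of the solution set from the automaton itself, as the paper does by showing every $a_s$ is $\pm a^{\pm1}$ or $\pm1$ and then determining $a$ from the translation data.
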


 \begin{proof} Let $g(z)=\lambda z$ and $f_k=g^{-1}h_k$ with orientation-preserving isometries $h_k$ for $k=1,\dots,5$. We determine the data of this IFS from the automaton. We choose the origin and unit point so that
\[
h_3=id \ \mbox{ and }\ h_1(z)=az+1 \ \mbox{ with } \ |a|=1.
\]
On the edge from $id$ to $p$ we have $p=f_3^{-1}f_1=f_4^{-1}f_3$ which implies $h_1=h_4^{-1}$. Thus
\[
h_4=h_1^{-1}=\overline{a}(z-1) \ \mbox{ and }\ p=h_1.
\]
The edge from $id$ to $h$ has labels 45 and 54, so $h$ must be self-inverse, $h(z)=-z+v$. One loop at $h$ has label 33 which means $f_3^{-1}hf_3=h$ or $hf_3=f_3h$. Thus $h$ must have the same fixed point as $f_3=g^{-1}$, that is, the origin. Thus
\[
h(z)=-z\,, \ \mbox{ and } \ h_5=h_4h=-\overline{a}(z+1)
\]
because $h= f_4^{-1}f_5=h_4^{-1}h_5$. The loop with label 12 at state $h$ says
$f_1^{-1}hf_2=h$, or $f_2=hf_1h$. Here $g^{-1}$ cancels out and we get
\[
h_2=az-1.
\]
Only two unknowns remain: $a$ and $\lambda $. The loop with label 12 at state $p=h_1$ gives
\[
\frac{a}{\lambda}(az-1)+1 = h_1f_2=f_1h_1= \frac{1}{\lambda}(a^2 +a+1)
\]
\[
\mbox{ which results in }\ \lambda=2a+1.
\]
Finally, we consider the path from state $p=h_1$ to the inverse state $p-=h_1^{-1}$ via $q$. The corresponding equation is $f_2^{-1}f_4^{-1}h_1f_5^2 =h_1^{-1}=h_4$. Hence
\[
\frac{\overline{a}}{\lambda^2}(z+1)-\frac{1}{\lambda}+1 = h_1f_5^2 =f_4f_2(\overline{a}(z-1))= \frac{\overline{a}}{\lambda^2}(z-2)-\frac{\overline{a}}{\lambda} .
\]
The resulting equation $\lambda^2 +\lambda(\overline{a}-1)+3\overline{a}=0$ is multiplied with $a$ since $a\overline{a}=1$. Then we substitute $a=(\lambda-1)/2$. We get the cubic equation $\lambda^3 -2\lambda^2 +3\lambda+6=0$. Dividing by $\lambda+1$, we arrive at the quadratic equation
\[
\lambda^2 -3\lambda+6=0\ \mbox{ with solution } \lambda=\frac12 (3+i\sqrt{15})\, \ a=\frac14 (1+i\sqrt{15}).
\]
Note that $a$ describes the rotation between pieces $X_3$ and $X_1$ in Figure~\ref{dog}. Since $2a$ is a~quadratic integer, $a$ is not a~root of unity, and the rotation angle is irrational. \hfill \end{proof}

This proof shows once more how much information can be encoded in a~small automaton.

\section{Conclusion}
This paper is by no means complete. It is a~starting point for further investigation. Open problems were listed in Sections~\ref{prope} and~\ref{IFSex}. In Section~\ref{topsel} we pointed out that topology-generating automata should be considered in the broader context of finite type symbolic spaces and graph self-similarity. Our assumption of finite equivalence classes should be canceled and the decision on the size of equivalence classes included into the algorithm of Section~\ref{comp}. Most importantly, our two algorithms for automata of multiple addresses and finite approximation spaces must be programmed efficiently so that they can be applied to larger automata $G$. Then new spaces can be generated from automata, and complicated self-similar sets as in~\cite{EFG1,EFG3} can be properly analyzed. Today's computers provide a~chance to realize the vision of Mandelbrot~\cite{FGN} and Barnsley~\cite{Bar} expressed 50 years ago: to model natural geometric phenomena like dust, soil, foam, snow, smoke, clouds. Perhaps automata can also help classify tilings and fractal spaces. On the theoretical side, the isomorphism problems indicated at the end of Section~\ref{topsel} seem most important.

\subsection*{Acknowledgments}
This paper is dedicated to Christiane Frougny. Without her, it would not have been written. Many years ago, she invited me to Paris and introduced me to her numeration community which profoundly influenced my direction of research. I am still very grateful to Christiane.

\EditInfo{December 5, 2023}{April 25, 2024}{Rigo Michel, Emilie Charlier and Julien Leroy}
\end{document}